  \def\MR#1{}
\definecolor{codegray}{gray}{0.9}
\definecolor{terminalbg}{rgb}{0.95, 0.95, 0.95}
\definecolor{terminaltext}{rgb}{0.1, 0.1, 0.1}
\definecolor{terminalprompt}{rgb}{0.4, 0.4, 0.4}
\definecolor{bgcolor}{RGB}{250,250,250}
\definecolor{codegray}{gray}{0.3}
\definecolor{keywordcolor}{RGB}{86,156,214}
\definecolor{stringcolor}{RGB}{214,157,133}
\definecolor{commentcolor}{RGB}{87,166,74}
\definecolor{identifiercolor}{RGB}{36,36,36}
\lstdefinestyle{modernpython}{
  breaklines=true,
  breakatwhitespace=false,
  postbreak=\mbox{\textcolor{gray}{$\hookrightarrow$}\space},
  columns=flexible,
  backgroundcolor=\color{bgcolor},
  language=Python,
  basicstyle=\ttfamily\small\color{identifiercolor},
  keywordstyle=\color{keywordcolor}\bfseries,
  stringstyle=\color{stringcolor},
  commentstyle=\color{commentcolor},
  showstringspaces=false,
  frame=single,
  framerule=0pt,
  rulecolor=\color{gray!20},
  tabsize=2,
  captionpos=b,
  breaklines=true,
  breakatwhitespace=true,
  numbers=none
}
\lstdefinestyle{terminal}{
  backgroundcolor=\color{terminalbg},
  basicstyle=\ttfamily\small\color{terminaltext},
  frame=single,
  rulecolor=\color{gray},
  frameround=tttt,
  breaklines=true,
  showstringspaces=false,
  xleftmargin=1em,
  aboveskip=1em,
  belowskip=1em,
}
\newcommand{\marginparstretch}{0.6}
\let\oldmarginpar\marginpar
\renewcommand\marginpar[1]{\-\oldmarginpar[\framebox{\setstretch{\marginparstretch}\begin{minipage}{\marginparwidth}{\raggedleft\tiny #1}\end{minipage}}]{\framebox{\setstretch{\marginparstretch}\begin{minipage}{\marginparwidth}{\raggedright\tiny #1}\end{minipage}}}}
\theoremstyle{plain}
\newtheorem{thm}{Theorem}[section]
\newtheorem{prop}[thm]{Proposition}
\newtheorem{lem}[thm]{Lemma}
\newtheorem{cor}[thm]{Corollary}
\newtheorem*{thm*}{Theorem}
\theoremstyle{definition}
\newtheorem{defi}[thm]{Definition}
\newtheorem{conj}[thm]{Conjecture}
\newtheorem*{NaC}{Notation and Convention}
\theoremstyle{remark}
\newtheorem{rem}[thm]{Remark}
\newtheorem{nota}[thm]{Notation}
\numberwithin{equation}{section}
\newcommand{\G}{\mathbb{G}}
\newcommand{\Z}{\mathbb{Z}}
\newcommand{\C}{\mathbb{C}}
\newcommand{\A}{\mathbb{A}}
\renewcommand{\P}{\mathbb{P}}
\newcommand{\bS}{\mathbb{S}}
\DeclareMathOperator{\bV}{\mathbf{V}}
\newcommand{\arw}{\longrightarrow}
\newcommand{\diag}{\operatorname{diag}}
\newcommand{\res}{\mathbf{res}}
\DeclareMathOperator{\id}{id}
\DeclareMathOperator{\Sym}{\mathrm{Sym}}
\DeclareMathOperator{\Spec}{\mathrm{Spec}}
\DeclareMathOperator{\Proj}{\mathrm{Proj}}
\DeclareMathOperator{\Gr}{\mathrm{Gr}}
\DeclareMathOperator{\Fl}{\mathrm{Fl}}
\DeclareMathOperator{\Bl}{\mathrm{Bl}}
\DeclareMathOperator{\codim}{\mathrm{codim}}
\DeclareMathOperator{\Hom}{Hom}
\DeclareMathOperator{\End}{End}
\DeclareMathOperator{\Ext}{Ext}
\DeclareMathOperator{\Tot}{Tot}
\DeclareMathOperator{\refl}{refl}
\DeclareMathOperator{\thick}{thick}
\newcommand{\mcD}{\mathcal{D}}
\newcommand{\mcE}{\mathcal{E}}
\newcommand{\mcF}{\mathcal{F}}
\newcommand{\mcG}{\mathcal{G}}
\newcommand{\mcO}{\mathcal{O}}
\newcommand{\mcQ}{\mathcal{Q}}
\newcommand{\mcT}{\mathcal{T}}
\newcommand{\mcU}{\mathcal{U}}
\newcommand{\mcX}{\mathcal{X}}
\newcommand{\scrA}{\EuScript{A}}
\newcommand{\scrB}{\EuScript{B}}
\newcommand{\scrC}{\EuScript{C}}
\newcommand{\scrD}{\EuScript{D}}
\newcommand{\scrE}{\EuScript{E}}
\newcommand{\scrF}{\EuScript{F}}
\newcommand{\scrG}{\EuScript{G}}
\newcommand{\scrO}{\EuScript{O}}
\newcommand{\scrP}{\EuScript{P}}
\newcommand{\scrT}{\EuScript{T}}
\newcommand{\scrU}{\EuScript{U}}
\newcommand{\scrV}{\EuScript{V}}
\newcommand{\scrW}{\EuScript{W}}
\DeclareMathOperator{\Qcoh}{Qcoh}
\DeclareMathOperator{\Perf}{Perf}
\DeclareMathOperator{\coh}{coh}
\DeclareMathOperator{\RG}{\mathrm{R}\Gamma}
\newcommand{\Gm}{\mathbb{G}_{\mathrm{m}}}
\DeclareMathOperator{\RHom}{\mathrm{RHom}}
\DeclareMathOperator{\RlcHom}{R\mathcal{H}\mathit{om}}
\DeclareMathOperator{\D}{D}
\DeclareMathOperator{\Db}{\mathrm{D}^{\mathrm{b}}}
\DeclareMathOperator{\Dcoh}{Dcoh}
\DeclareMathOperator{\Dmod}{Dmod}
\DeclareMathOperator{\GL}{\mathrm{GL}}
\renewcommand{\mod}{\mathop \mathrm{mod}}
\newcommand{\wX}{\widetilde{X}}
\newcommand{\quot}{\mathord{/\!\!/}}
\def\xof#1{\pgf@process{#1}\pgfmathparse{\pgf@x/65536}\pgfmathresult}
\def\yof#1{\pgf@process{#1}\pgfmathparse{\pgf@y/65536}\pgfmathresult}
\mathchardef\mhyphen="2D
\title[Window categories for a simple $9$-fold flop of Grassmannian type]{Window categories for a simple $9$-fold flop \\ of Grassmannian type}
\author[W. Donovan, W. Hara, M. Kaputska, M. Rampazzo]{Will Donovan, Wahei Hara, Micha\l{} Kapustka, Marco Rampazzo}
\date{\today}
\begin{document}
\maketitle

\begin{abstract}
    The local simple $9$-fold flop of Grassmannian type is a birational transformation between total spaces of vector bundles on the Grassmannians $\Gr(2, 5)$ and~$\Gr(3, 5)$. We produce four different derived equivalences which commute with the pushforward functors for the flopping contractions. These equivalences are realized by identifying four different window categories inside the derived category of coherent sheaves on an Artin stack. As an application, our approach provides a new proof of derived equivalence for a pair of non-birational Calabi--Yau threefolds realized as zero loci of sections of homogeneous vector bundles in Grassmannians.
\end{abstract}

\setcounter{tocdepth}{1}
\tableofcontents

\section{Introduction}

\subsection{Background}

Consider a diagram for a flop between two smooth varieties.
\begin{equation} \label{diagram of flop}
\begin{tikzcd}
X_+ \arrow[dr, "f_+"'] \arrow[rr, dashrightarrow] & & X_- \arrow[dl, "f_-"] \\
& Y&
\end{tikzcd}
\end{equation}
Then by definition of a flop there is no $Y$-isomorphism between $X_+$ and $X_-$,
i.e.~there is no isomorphism $\varphi \colon X_+ \xrightarrow{\sim} X_-$ such that $f_+ = f_- \circ \varphi$.
On the contrary, such an ``isomorphism'' can exist at the level of derived categories.
Namely, it is expected, originally by Bondal and Orlov, that the following holds.

\begin{conj} \label{D-equivalence conj}
Given a smooth flop diagram as in (\ref{diagram of flop}), there exists an exact equivalence 
\[ \Phi \colon \mcD(X_+) \xrightarrow{\sim} \mcD(X_-) \] 
with a functor isomorphism $R(f_+)_* \simeq R(f_-)_* \circ \Phi$,
where $\mcD(X_{\pm}) \coloneqq \Db(\coh X_{\pm})$.
 \end{conj}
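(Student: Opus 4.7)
The Bondal--Orlov conjecture is open in full generality, so my sketch is for the specific case at hand, namely the 9-fold flop of Grassmannian type advertised in the title, and uses the method indicated by the title: that of \emph{window categories}. The starting point is to realize both $X_+$ and $X_-$ as open GIT quotients of a common linear representation $V$ of a reductive group $G$, with two linearizations $\chi_\pm$ sitting on opposite sides of a wall in the VGIT chamber structure; the birational map $X_+ \dashrightarrow X_-$ is then identified with the corresponding VGIT wall-crossing. The natural candidate combines the standard GIT descriptions of $\Gr(2,5)$ and $\Gr(3,5)$ (as quotients of matrix spaces by $GL_2$ and $GL_3$) with the linear data specifying the total spaces of the flopped rank-$3$ bundles, all packaged into a single quotient stack $\mcX = [V/G]$ whose structure group $G$ contains both $GL_2$ and $GL_3$ in a controlled way.

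Once the presentation $\mcX = [V/G]$ is fixed, I would introduce window subcategories $\mcW_\delta \subset \Db(\mcX)$ indexed by weight data $\delta$ at the Kempf--Ness (KN) strata of the unstable loci. The magic-window machinery of Halpern-Leistner and Ballard--Favero--Katzarkov then guarantees that, for $\delta$ in an appropriate range, each of the restriction functors $\mcW_\delta \to \Db(X_\pm)$ is an equivalence; composing one with the quasi-inverse of the other yields the required $\Phi \colon \Db(X_+) \xrightarrow{\sim} \Db(X_-)$. The four equivalences promised in the abstract should correspond to four distinct natural choices of $\delta$, likely related by reflection or mutation on the lattice of admissible weights.

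Compatibility with the pushforwards to $Y$ is essentially automatic in this framework: the affine base $Y$ is recovered as (the relevant component of) the invariant-theory quotient $\Spec \C[V]^G$, the morphism $\mcX \to Y$ factors through both $X_\pm$, and the pushforward for each $f_\pm$ can be computed by first passing to the window $\mcW_\delta$. Since both equivalences arise from restriction out of the \emph{same} subcategory, the functor isomorphism $R(f_+)_* \simeq R(f_-)_* \circ \Phi$ holds essentially on the nose.

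The main obstacle, I expect, lies in the first step: producing an honest linear GIT presentation whose two chambers yield precisely \emph{this} flop. The apparent asymmetry between $\Gr(2,5)$ and $\Gr(3,5)$ --- different structure groups, different Plücker embeddings --- must be reconciled inside a single $G$-action, and the fiber data of the flopped bundles has to be incorporated so that the wall-crossing implements the given $9$-fold birational map rather than some other one. Once such a presentation is secured, identifying the four admissible windows reduces to a careful but standard combinatorial study of the KN strata and their weights, and the compatibility with $R(f_\pm)_*$ should follow by tracking how objects of $\mcW_\delta$ restrict to each chamber.
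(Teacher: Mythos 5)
The statement here is a conjecture, and the paper of course does not prove it in general; what the paper proves is the $A_4^G$ case (Theorem 1.2), so I will read your sketch as a proposal for that case. You have the right broad picture — a single linear presentation $\mcX = [W/G]$ with two GIT chambers, and a window subcategory of $\mcD(\mcX)$ that restricts to an equivalence to each $\mcD(X_\pm)$ — but the core step of your argument does not go through, and you have misidentified where the difficulty lies.

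Your central claim is that ``the magic-window machinery of Halpern-Leistner and Ballard--Favero--Katzarkov guarantees that, for $\delta$ in an appropriate range, each of the restriction functors $\mcW_\delta \to \Db(X_\pm)$ is an equivalence.'' This is precisely what fails in this example, and the paper devotes an entire section (\S\ref{sect:comparisonHL}) to establishing that failure. The representation $W = \Hom(S,V) \oplus \Hom(S,(\det S)^{\otimes 2})$ of $G = \GL(S) = \GL(3)$ is not quasi-symmetric, so the magic-window theorem of \cite{halpernleistner_derived} does not apply. One can still form the graded-restriction windows $\scrG_\pm^w$ of \cite{halpernleistner_GIT}, each of which restricts to an equivalence to \emph{one} side, but these are genuinely different subcategories of $\mcD(\mcX)$ for the two sides: on the $+$ side, the bundles $\mcO_W \otimes V(\chi)$ in $\scrG_+^w$ can, for a good choice of $w$, restrict to a tilting bundle on $X_+$ (essentially Kapranov's collection), but then the very same bundle restricted to $X_-$ is \emph{not} pretilting (Remark~\ref{rem:why not Kapranov} exhibits a nonvanishing $\Ext^3$ and $\Ext^6$). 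On the $-$ side it is even worse: the weight constraints (c$'$) from the KN 1-PSs $(1,1,-4)$ and $(1,0,-2)$ force $|\scrC_-^w| \leq 6$ for every $w$, so $\scrG_-^w$ never contains enough vector bundles of the form $\mcO_W \otimes V(\chi)$ to classically generate $\mcD(X_-)$, whose $K_0$ has rank $10$. No choice of $\delta$ gives you a single subcategory that works for both sides via the standard machinery.

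Because of this, the actual proof must build its window categories by hand. The paper first mutates Kapranov's exceptional collection on $\Gr(3,5)$ to obtain new full strong exceptional collections (Proposition~\ref{prop our FEC}), then shows that the pullbacks of their direct sums give tilting bundles $\scrT_+^\star$ on $X_+$ (Theorem~\ref{lem:tilting_plus_side}), and identifies the corresponding bundles $\scrT_-^\star$ on $X_-$ via the rank-$3$ bundle $\scrP$ that is the image of $\scrU_3^\vee$ under the flop. Proving $\scrT_-^\star$ is pretilting is the technical heart: it requires pushing bundles across the blow-up $\wX$ (Lemmas~\ref{lem1}--\ref{ex2}) and a long series of Borel--Weil--Bott calculations in \S\ref{sect:minus-side-tilting}. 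Only then are the magic window categories $\scrW^\star = \thick(\scrT^\star) \subset \mcD(\mcX)$ defined, and the restriction equivalences proved via Proposition~\ref{prop:generator ff} and Proposition~\ref{generation criteria} (which uses crepancy of $f_\pm$ to upgrade a fully faithful functor from a pretilting bundle with finite-global-dimension endomorphism algebra to an equivalence). Also, a small but telling point: you flag the GIT presentation as the main obstacle, but it is already known (\cite{ourpaper_cy3s, ourpaper_generalizedroofs}), with $G = \GL(3)$ alone — there is no need for a group containing both $\GL_2$ and $\GL_3$, as $\Gr(2,5)$ arises from the alternate stability via the bundle $\scrP$ rather than from a $\GL_2$-presentation. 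The genuine obstacle is the cohomological verification that these non-standard windows really do restrict to tilting bundles on both sides.
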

 
This conjecture is known to be true in dimension three \cite{bridgeland_flop, VdB_flop},
and indicates an interesting relationship between derived categories and birational geometry.
Note that there exists a generalisation of the conjecture above to the context of K-equivalences, 
which is known as the DK-conjecture \cite{bondalorlov, kawamata}.

Conjecture~\ref{D-equivalence conj} in dimension more than or equal to four is still widely open.
One major obstacle in addressing this problem arises from the inherent difficulty of classifying (smooth) flops.
Within the overwhelmingly diverse and complex landscape of flops, 
one reasonable class to study would be given by \textit{simple flops},
which are smooth flops connected by one smooth blow-up and one smooth blow-down.
The Atiyah flops and Mukai flops are typical examples of simple flops.

\subsection{Simple flops}
Kanemitsu \cite{kanemitsu} provided a large list of simple flops 
and at the same time classified simple flops up to dimension eight.
Kanemitsu characterised the local structure of simple flops using \textit{simple Mukai pairs}.
A Mukai pair is a pair $(V,\mcE)$ of a Fano manifold $V$ and an ample vector bundle $\mcE$ over $V$ such that $c_1(V) = c_1(\mcE)$,
and a Mukai pair $(V, \mcE)$ is called simple if $V$ is of Picard rank one and $\P_V(\mcE) = \Proj_V \Sym^{\bullet} \mcE$ has another $\P^{r-1}$-bundle structure, where $r$ is the rank of $\mcE$.
Given a simple Mukai pair $(V_+, \mcE_+)$, the projectivization $\P_{V_+}(\mcE_+)$ is called a \textit{roof},
and the second $\P^{r-1}$-bundle structure of the roof is given by another Mukai pair $(V_-, \mcE_-)$. 
In a simple flop $X_+ \dashrightarrow X_-$, the ambients $V_{\pm}$ of Mukai pairs appear as the centres of blow-ups, and the roof $\P(\mcE_+) = \P(\mcE_-)$ appears as the common exceptional divisor, as shown below.
\[
\begin{tikzcd}
& & \P_{V_+}(\mcE_+) = \P_{V_-}(\mcE_-) \arrow[ddrr] \arrow[ddll] \arrow[d, hook] & & \\
& & \Bl_{V_+}X_+ = \Bl_{V_-}X_- \arrow[dr] \arrow[dl] & & \\
V_+ \arrow[r, hook] & X_+ \arrow[rr, dashrightarrow] & & X_- & V_- \arrow[l, hook']
\end{tikzcd}
\]
In all known cases, the ambient $V$ of a simple Mukai pair $(V,\mcE)$ is a rational homogeneous manifold, 
and $\mcE$ is an irreducible homogeneous vector bundle\footnote{Remarkably, only one example of a roof $\P_V(\mcE)$ is known which is not given by a rational homogeneous manifold~\cite{kanemitsu}.}.
Following this fact, Kanemitsu labeled simple flops in his classification using Dynkin diagrams.
For instance, Atiyah flops are of type $A_n \times A_n$, and Mukai flops are of type $A_n^M$.

The significance of studying simple flops lies not only in the richness and simplicity of this class within birational geometry, but also in its connection to compact Calabi--Yau manifolds.
Namely, for a simple Mukai pair $(V_+,\mcE_+)$ with the corresponding Mukai pair~$(V_-, \mcE_-)$,
there exists a natural isomorphism $H^0(\mcE_+) \simeq H^0(\mcE_-)$,
and if one chooses two general regular sections $s_{\pm} \in H^0(\mcE_{\pm})$
that are related to each other by this isomorphism, 
then as the vanishing loci $\bV(s_{\pm}) \subset V_{\pm}$
one has a pair $(\bV(s_+), \bV(s_-))$ of compact Calabi--Yau manifolds.
Such Calabi--Yau manifolds $\bV(s_+)$ and $\bV(s_-)$ are not birationally equivalent to each other in general, 
but are expected to satisfy some homological or motivic relations such as D-equivalence or L-equivalence.
The birational transformation $\Tot_{V_+}(\mcE_+^{\vee}) \dashrightarrow  \Tot_{V_-}(\mcE_-^{\vee})$ is a local model of the simple flop of the corresponding type, 
and if there is a reasonable derived equivalence 
$\mcD(\Tot_{V_+}(\mcE_+^{\vee})) \xrightarrow{\sim} \mcD(\Tot_{V_-}(\mcE_-^{\vee}))$, 
then one can descend it to give a derived equivalence 
$\mcD(\bV(s_+)) \xrightarrow{\sim} \mcD(\bV(s_-))$.

\subsection{Simple flop of type $A_4^G$} \label{sect:AG4flop}

The aim of this paper is to study the derived equivalence for a local model of the simple flop of type $A_4^G$
from the perspectives of geometric invariant theory and noncommutative algebra.
The corresponding simple Mukai pairs and roof are related to the following marked Dynkin datum.

\[ \begin{tikzpicture}[
  every node/.style={circle, draw, minimum size=2mm, inner sep=0pt},
  label distance=4pt
]

  \node (1) at (0,0) [label=below:{$\alpha_1$}] {};
  \node (2) at (1.5,0) [fill=black, label=below:{$\alpha_2$}] {};
  \node (3) at (3,0) [fill=black, label=below:{$\alpha_3$}] {};
  \node (4) at (4.5,0) [label=below:{$\alpha_4$}] {};
  
  \draw (1) -- (2) -- (3) -- (4);

\end{tikzpicture} \]

More explicitly, the Mukai pairs are given by $(\Gr(3,5), \mcU_3(2))$ and $(\Gr(2,5), \mcQ_3^{\vee}(2))$,
where $\Gr(k,n)$ denotes the Grassmannian of $k$-dimensional linear subspaces in $\C^n$, $\mcU_3$ is the rank three universal subbundle over $\Gr(3,5)$, and $\mcQ_3$ is the rank three universal quotient bundle over $\Gr(2,5)$.
The roof is
\[ \Fl(2,3;5) \simeq \P_{\Gr(3,5)}(\mcU_3(2)) \simeq \P_{\Gr(2,5)}(\mcQ_3^{\vee}(2)), \]
which is actually the homogeneous manifold corresponding to the above Dynkin diagram with markings.

Consider the total spaces
\begin{align*}
X_+ &\coloneqq \Tot_{\Gr(3,5)}(\mcU_3^{\vee}(-2)) \coloneqq \Spec \Sym^{\bullet} (\mcU_3(2)), \\
X_- &\coloneqq \Tot_{\Gr(2,5)}(\mcQ_3(-2)) \coloneqq \Spec \Sym^{\bullet} (\mcQ_3^{\vee}(2)), 
\end{align*}
and their zero-sections $\G_+ \subset X_+$ and $\G_- \subset X_-$.
By definition, there are canonical isomorphisms
\[ \Bl_{\G_+}X_+ \simeq \Tot_{\Fl(2,3;5)}(\mcO(-1,-1)) \simeq \Bl_{\G_-}X_-, \]
and the associated birational map
\[ X_+ \dashrightarrow X_- \]
is a local model of the simple flop of type $A^G_4$.
Let $f_{\pm} \colon X_{\pm} \to \Spec R$ be the flopping contractions, where $R \simeq H^0(\mcO_{X_+}) \simeq H^0(\mcO_{X_-})$.
The ring $R$ is isomorphic to the coordinate ring of the affine cone of the polarized variety $(\Fl(2,3;5), \mcO(1,1))$.

Now we are ready to state the main result of this paper.

\begin{thm}[\ref{cor:main result}]\label{thm:main_theorem}
There exists an exact equivalence of $R$-linear triangulated categories
\[ \Phi \colon \mcD(X_+) \xrightarrow{\sim} \mcD(X_-) \] 
that satisfies a functor isomorphism $R(f_+)_* \simeq R(f_-)_* \circ \Phi$.
\end{thm}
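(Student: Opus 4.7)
The plan is to obtain the equivalence $\Phi$ from window subcategories inside the derived category of a common Artin stack whose two GIT chambers yield $X_+$ and $X_-$ respectively.

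The first step is to construct an ambient stack $\mcX = [Z/G]$ with $G$ reductive and $Z$ affine (or quasi-affine), together with two opposite $G$-linearisations $\ell_\pm$ whose semistable loci $[Z^{ss}_{\pm}/G]$ are identified with $X_\pm$, and with common affine quotient $Y = \Spec R$ so that the $f_\pm$ become the induced projections to the good quotient. Since $\Gr(3,5)$ and $\Gr(2,5)$ admit the well-known GIT presentations $\Gr(k,5) = [\Hom(\C^k,\C^5)^{ss}/\GL_k]$, a natural candidate is $Z$ built from data of the form $\Hom(V_2, V_3) \oplus \Hom(V_3, \C^5)$ with $G = \GL(V_2) \times \GL(V_3)$, together with character twists encoding the $\mcO(-2)$-shift in $\mcU_3^\vee(-2)$ and $\mcQ_3(-2)$. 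One would check that in one chamber the stability condition forces $V_3 \hookrightarrow \C^5$ injective (producing $\Gr(3,5)$ with remaining fibre data $\Hom(V_2, V_3)$ matching $\mcU_3^\vee(-2)$), and symmetrically in the opposite chamber.

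With this setup in place I would invoke the window formalism of Halpern-Leistner and the magic window construction of \v{S}penko--Van den Bergh for reductive $G$. For each chamber there is a Kempf--Ness stratification of $Z^{us}_\pm$, and for each choice of a translate of the fundamental alcove of $G$ one obtains a window subcategory $\mcW \subset \mcD(\mcX)$, specified by a grade restriction rule on the $T$-weights, such that restriction yields an equivalence $\mcW \xrightarrow{\sim} \mcD(X_\pm)$. Taking any translate of the alcove that is admissible for both chambers simultaneously produces a derived equivalence
\[
\Phi\colon \mcD(X_+) \xleftarrow{\sim} \mcW \xrightarrow{\sim} \mcD(X_-).
\]
The four equivalences of the theorem should correspond to four distinct admissible translates, indexed by the characters used to absorb the $\mcO(-2)$-twists on either side. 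The intertwining $R(f_+)_* \simeq R(f_-)_* \circ \Phi$ and the $R$-linearity are then immediate: both derived pushforwards factor through the derived invariant global sections $\RG(\mcX, -)^G = \RG(\Spec R, -)$, which is independent of the chamber and depends only on the window.

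The main obstacle is the combinatorial verification that a single translate of the fundamental alcove satisfies the grade restriction rules of the Kempf--Ness strata for both chambers. The non-abelian nature of $G = \GL(V_2) \times \GL(V_3)$ makes the weight accounting substantially more involved than in the toric case: one must compute the characters appearing in the conormal bundles of each unstable stratum on both sides and compare them against the polytope of permissible weights. Pinning down the ambient $Z$ precisely so that $\mcU_3^\vee(-2)$, $\mcQ_3(-2)$, and the invariant ring $R$ all emerge naturally from the $G$-eigenspace data on $\mcX$ is expected to take up the bulk of the work.
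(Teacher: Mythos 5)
Your overall skeleton---present $X_\pm$ as two GIT chambers of a stack $\mcX=[W/G]$, find a subcategory $\scrW\subset\mcD(\mcX)$ restricting equivalently to both sides, and deduce the intertwining with $R(f_\pm)_*$ from the fact that everything lives over $\Spec R$---is exactly the architecture of the paper. But the step you delegate to existing machinery is precisely the step that fails, and the paper devotes \S\ref{sect:comparisonHL} to demonstrating this. First, the relevant representation (it is $W=\Hom(S,V)\oplus\Hom(S,(\det S)^{\otimes 2})$ for a \emph{single} $\GL(3)=\GL(S)$, not the two-group quiver datum you guess) is not quasi-symmetric, so the magic-window theorems of Halpern-Leistner--Sam and \v{S}penko--Van den Bergh simply do not apply. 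Second, even the one-sided grade-restriction windows $\scrG_\pm^w$ of Halpern-Leistner, which do give equivalences $\scrG_\pm^w\simeq\mcD(X_\pm)$ separately, never match up: the explicit Kempf--Ness analysis shows that for the $-$ side the adapted one-parameter subgroups have weights $(-1,-1,-1)$, $(1,1,-4)$, $(1,0,-2)$, and the resulting grade restriction rule allows at most $6$ dominant weights $\chi$ with $\mcO_W\otimes V(\chi)\in\scrG_-^w$, whereas $K_0(X_-)$ has rank $10$; dually, the $+$-side window built from Kapranov's collection restricts on $X_-$ to a bundle with $\Ext^3(\scrP^{\vee}(-1),\Sym^2\scrP)\neq 0$, so it is not even pretilting there. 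Hence no ``common admissible translate of the alcove'' exists, and your four equivalences cannot be indexed that way.

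What the paper does instead is construct the windows by hand: it mutates Kapranov's collection on $\Gr(3,5)$ to produce new full strong exceptional collections (Proposition~\ref{prop our FEC}), pulls them back to define candidate tilting bundles $\scrT_\pm^{\star}$ for four choices of $\star$, and then proves the pretilting property on the $X_-$ side by a long sequence of explicit cohomology vanishings (Borel--Weil--Bott, Littlewood--Richardson, and pushforward computations along the roof $\wX\to X_\pm$), with generation on $X_-$ deduced from finiteness of global dimension of $\End(\scrT_+^\star)$ via Proposition~\ref{generation criteria}. The resulting subcategories $\scrW^\star=\thick(\scrT^\star)\subset\mcD(\mcX)$ are ``magic'' in the sense of being generated by bundles $\mcO_W\otimes V(\chi)$, but they are not instances of the known general constructions. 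So the gap in your proposal is not cosmetic: without replacing the appeal to general window/alcove theory by an explicit construction and verification of a common (pre)tilting generator, the argument does not go through.
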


Note that the equivalence of the derived categories (as $R$-linear triangulated categories) for this flop was first constructed by Morimura \cite{morimura_flop}. 
The method of Morimura follows a sequence of mutations in the derived category of  $\Tot_{\Fl(2,3;5)}(\mcO(-1,-1))$  parallel to a sequence of mutations on the hyperplane section of the variety $\Fl(2,3;5)$ used in \cite{ourpaper_cy3s} to prove the derived equivalence for the associated pair of Calabi--Yau manifolds.

In this paper, we prove the derived equivalence for the flop by employing tilting bundles, which provides a completely different approach from that of Morimura. 
Unlike Morimura's equivalence, our construction allows one to determine immediately the images of certain standard vector bundles, including some line bundles, under the equivalence. 
Moreover, we construct four derived equivalences satisfying the functor isomorphism in Conjecture~\ref{D-equivalence conj}; 
the appearance of exactly four such equivalences is expected to have a physical background (see Remark~\ref{rem:Hori}). 
A further advantage of our approach is that starting from a tilting bundle one can naturally construct important objects such as windows and noncommutative crepant resolutions.

\subsection{Strategy}\label{sec:strategy}
As already mentioned, 
the main idea behind the proof of Theorem \ref{thm:main_theorem} is to use \textit{tilting bundles}
(see Definition~\ref{def:tilting}).
Namely, if one has tilting bundles $\scrT_+$ and $\scrT_-$ over $X_+$ and $X_-$ respectively, 
such that $\End_{X_+}(\scrT_+) \simeq \End_{X_-}(\scrT_-)$,
then they yield derived equivalences
\[ \mcD(X_+) \xrightarrow[{\RHom(\scrT_+,-)}]{\sim} \mcD(\End_{X_{\pm}}(\scrT_{\pm})) \xrightarrow[{- \otimes \scrT_-}]{\sim} \mcD(X_-). \]
It is worth noting that for the simple flop of type $A^G_4$
the existence of tilting bundles has a deeper background from geometric invariant theory (GIT),
which can be summarized as follows.
To begin with, there exists a $\GL(3)$-representation $W$ such that its GIT quotients with respect to two stability conditions
are $W \quot_\pm\GL(3) \simeq X_\pm$.
In particular, there are canonical open immersions 
$\iota_\pm \colon X_\pm \hookrightarrow [W/\GL(3)]$
where $[W/\GL(3)]$ is the corresponding Artin stack.
Furthermore, for the tilting bundles $\scrT_{\pm}$ constructed in this paper,
there is a single pretilting bundle $\scrT$ over $[W/G]$ whose restrictions recover them,
i.e.~$\iota_\pm^* \scrT \simeq \scrT_\pm$.
This bundle $\scrT$ generates a thick subcategory $\scrW \subset \mcD([W/G])$
such that the restriction functors $\iota_{\pm}^* \colon \scrW \to \mcD(X_{\pm})$ are both equivalences.

Such a subcategory $\scrW \subset \mcD([W/G])$ is often referred to as a \textit{window category} (for both $X_+$ and $X_-$).
Several generalities for window categories have been established in previous works including \cite{BFK_VGIT,halpernleistner_GIT, halpernleistner_derived}.
However, note that the results in \cite{halpernleistner_derived} cannot be applied to our example of a flop,
since the $\GL(3)$-representation $W$ is not quasi-symmetric.
Moreover, it turns out that the window categories introduced in \cite{BFK_VGIT,halpernleistner_GIT} do not coincide with the window categories constructed in this paper, as discussed in \S\ref{sect:comparisonHL}.

The above background involving Artin stacks and window categories can be applied
to examples that admit a description as a GIT quotient of a smooth affine variety (or vector space).
Among the known simple flops, such GIT quotient descriptions are available only for the Atiyah flops (i.e., of type $A_n \times A_n$)
and the flops of type $A^G_n$.
The approach using tilting bundles is broadly parallel to the study of window categories in cases where a GIT description exists,
but importantly, it also extends to more general situations.
In fact, a proof of derived equivalence via tilting bundles is known for the simple flops listed in the table below. At present, such derived equivalences have been established for all known simple flops up to dimension ten.

\begin{table}[h]
    \label{tab:flops}
    \renewcommand{\arraystretch}{1.3} 
    \centering
    \begin{tabular}{|c|c|c|}
        \hline
        Dimension & Type of simple flop & Equivalence using tilting bundles \\ \hline\hline
        $2n+1$ ($n \geq 1$) & $A_n \times A_n$ (Atiyah flop) & well-known (c.f.~\cite{hara_mukai}) \\ \hline
        $2n$ ($n \geq 2$) & $A_n^M$ (Mukai flop) & well-known (c.f.~\cite{hara_mukai}) \\ \hline
                     5              & $C_2$ (Abuaf flop)  & \cite{segal_C2, hara_abuaf} \\ \hline
                     7              & $G_2$ (Abuaf--Ueda flop) & \cite{hara_G2} \\ \hline
                     8              & $G_2^{\dagger}$ & \cite{hara_G2dag} \\ \hline
                     9              & $A_4^G$ & this paper \\ \hline
                     10		    & $D_4$ & \cite{hara_D4} \\ \hline
        \vdots & \vdots & \vdots \\ \hline
    \end{tabular}
    
\bigskip
        \caption{Derived equivalence of known simple flops using tilting bundles.}
\end{table}

\subsection{Projective Calabi--Yau threefolds and GLSM}
In theoretical physics, 
the data of the variation of GIT relating $X_+$ and $X_-$ describes a \emph{gauged linear sigma model} (GLSM). 
Such entities are supersymmetric gauge theories introduced by Witten in \cite{witten}, 
which exhibit different phases: roughly speaking, each of those phases is associated to a space parametrizing lowest-energy states, which is the critical locus of a function called \emph{superpotential} (determined by the geometric data).
By physical arguments, it is expected that such critical loci are related at the level of derived categories of coherent sheaves. 
While Witten's original formulation describes GIT quotients with respect to abelian groups, successive advancements have been made in the context of non-abelian gauge theories, 
producing GLSM interpretations of Calabi--Yau subvarieties of Grassmannians and Pfaffians, 
and other interesting geometries \cite{horitong, addingtondonovansegal, knappsharpe}.
In particular, our variation of GIT has been shown to exhibit two ``simple'' geometric phases \cite{ourpaper_cy3s}, 
whose associated critical loci are Calabi--Yau threefolds which are non-birationally equivalent. 
There is a well-known correspondence between zero loci of sections of vector bundles and associated GLSM phases: 
in particular, a general section $s$ of a globally generated vector bundle $\mcE$ on $X$ determines a GLSM phase with critical locus $\bV(s)$ and a canonical superpotential $Q_s:\operatorname{Tot}(\mcE^\vee)\arw \C$. 
In such context, by Kn\"orrer periodicity, 
the derived category of $\bV(s)$ is equivalent to the derived factorization category $\Dcoh_{\Gm}(\operatorname{Tot}(\mcE^\vee), \chi_{\id}, W_s)$ (see \S\ref{sect:CY3results}).
This correspondence enables us, via Theorem~\ref{thm:main_theorem}, to provide a new proof of the main result of \cite{ourpaper_cy3s} and to extend the result to the singular case of $\bV(s)$, by arguments similar to those in \cite{segal_git, addingtondonovansegal}.

\begin{cor}[\ref{cor:CY3Deq}]
Let
\begin{align*}
s_+ & \in H^0(\Gr(3,5), \mcU_3(2)) \\
s_- & \in H^0(\Gr(2,5), \mcQ_3^{\vee}(2))
\end{align*}
be corresponding sections under the canonical isomorphism $H^0(\mcU_3(2)) \simeq H^0(\mcQ_3^{\vee}(2))$.
If $s_{\pm}$ are regular,
then there is an exact equivalence of the derived categories
\[ \mcD(\bV(s_+)) \simeq \mcD(\bV(s_-)) \]
of (possibly singular) Calabi--Yau threefolds.
\end{cor}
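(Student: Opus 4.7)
The plan is to reduce the desired equivalence to Theorem~\ref{thm:main_theorem} by passing through derived factorization categories associated to the natural superpotentials on $X_\pm$, in the spirit of \cite{segal_git, addingtondonovansegal}. Each $X_\pm$ carries a fibrewise $\Gm$-action of weight one, and the sections $s_\pm$ determine canonical weight-one superpotentials $W_{s_\pm} \colon X_\pm \to \A^1$ via the tautological pairing between the vector bundles $\mcU_3(2)$, $\mcQ_3^{\vee}(2)$ and their duals. By Kn\"orrer periodicity one has exact equivalences
\[\mcD(\bV(s_\pm)) \simeq \Dcoh_{\Gm}(X_\pm, \chi_{\id}, W_{s_\pm}),\]
which hold even when $\bV(s_\pm)$ are singular, so it suffices to construct an equivalence between the two factorization categories on the right-hand side.

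The second step is to promote the window picture of Theorem~\ref{thm:main_theorem} to the factorization setting. Since the canonical isomorphism $H^0(\mcU_3(2)) \simeq H^0(\mcQ_3^{\vee}(2))$ identifies $s_+$ with $s_-$, and since $X_\pm$ are the two GIT quotients of a common $\GL(3)$-representation $W$, the superpotential lifts to a single $\GL(3)$-invariant function $W_s$ on $[W/\GL(3)]$ whose restrictions along $\iota_\pm \colon X_\pm \hookrightarrow [W/\GL(3)]$ recover $W_{s_\pm}$. I would then consider the $\Gm$-equivariant derived factorization category of $([W/\GL(3)], W_s)$ and isolate the subcategory $\scrW^{\mathrm{fact}}$ whose underlying coherent part lies in the window $\scrW$ of \S\ref{sec:strategy}. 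Since $W_s$ has $\GL(3)$-weight zero, it preserves the weight-based conditions cutting out $\scrW$, and the restriction functors should induce exact equivalences
\[\iota_\pm^* \colon \scrW^{\mathrm{fact}} \xrightarrow{\sim} \Dcoh_{\Gm}(X_\pm, \chi_{\id}, W_{s_\pm}),\]
by arguing factorisation-by-factorisation with the tilting bundles $\scrT_\pm = \iota_\pm^* \scrT$. Composing the three equivalences yields the desired $\mcD(\bV(s_+)) \simeq \mcD(\bV(s_-))$.

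The main obstacle will be verifying that the window equivalence of Theorem~\ref{thm:main_theorem} survives the coupling to the superpotential. The non-quasi-symmetric nature of $W$ means that the factorization-category results of \cite{halpernleistner_derived, BFK_VGIT} cannot be invoked as a black box, and, as discussed in \S\ref{sect:comparisonHL}, the windows constructed in the paper differ from the standard Halpern-Leistner windows. One must therefore check by a direct weight-comparison argument, in the style of \cite{segal_git, addingtondonovansegal}, that the instability strata of $W$ remain compatible with $W_s$ in the sense that every object of $\scrW^{\mathrm{fact}}$ has bounded, finite-rank-cohomology restriction to the unstable loci, and that the tilting object $\scrT_\pm$ still generates the factorization category on each side. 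Once these two points are established, the three-step chain above delivers the corollary, recovering the main result of \cite{ourpaper_cy3s} and extending it to the singular setting.
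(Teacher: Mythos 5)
Your first step (Kn\"orrer periodicity, reducing to the $\Gm$-equivariant factorization categories $\Dcoh_{\Gm}(X_\pm,\chi_{\id},Q_{s_\pm})$) is exactly the paper's first step. But your second step diverges, and it is where the proposal has a genuine gap: you propose to build a factorization-window $\scrW^{\mathrm{fact}}$ inside the equivariant factorization category of $[W/\GL(3)]$ and to show that both restrictions $\iota_\pm^*$ are equivalences, yet you explicitly defer the verification of this as ``the main obstacle'' to be ``checked by a direct weight-comparison argument.'' That verification is the entire content of the step. Since, as you yourself note, the windows here are not the Halpern-Leistner/BFK ones and the representation $W$ is not quasi-symmetric, none of the existing factorization-window theorems apply, and you would essentially have to redo the whole of \S\ref{sec:windows} in the curved setting. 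As written, the proposal asserts the conclusion of its key step rather than proving it.

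The paper avoids this difficulty entirely by a different second step: it observes that the $\Gm$-action on $W$ (trivial on $\Hom(S,V)$, scaling on $\Hom(S,(\det S)^{\otimes 2})$) linearizes every summand $\iota_\pm^*(\mcO_W\otimes V(\chi))$ of the tilting bundles, so $\scrT_+^{\star}$ and $\scrT_-^{\star}$ carry $\Gm$-equivariant structures under which the isomorphism $\End_{X_+}(\scrT_+^{\star})\simeq\End_{X_-}(\scrT_-^{\star})=\Lambda^{\star}$ of Corollary~\ref{cor:same End} is equivariant. Hirano's equivariant tilting theorem for factorization categories then gives
\begin{align*}
\Dcoh_{\Gm}(X_\pm,\chi_{\id},Q_{s_\pm})\simeq\Dmod_{\Gm}(\Lambda_\pm^{\star},\chi_{\id},Q_{\bar s}),
\end{align*}
where $Q_{\bar s}$ lives on $\Spec R$; the two right-hand sides literally coincide, so no window on the stack, no analysis of unstable strata, and no generation argument in the factorization category are needed. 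If you want to complete your proposal, the cleanest fix is to replace your stack-level window construction with this equivariant-tilting step; otherwise you must actually carry out the factorization analogue of Theorem~\ref{thm:DHKR window}, which your proposal does not do.
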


\begin{rem} \label{rem:Hori}
The gauge theory in physics can describe concretely
 the stringy K\"{a}hler moduli space (SKMS) associated to the GLSM we study in this paper.
 We provide a rough outline here; for more detailed account of the theory and the related works, 
 we refer the reader to \cite{HHP,Hori, EHKR17,EHKRnew}.

First, the model involves two parameters, the Fayet--Iliopoulos (FI) parameter and the theta parameter. 
Since the theta parameter is $2\pi$-periodic, it is natural to consider the FI--theta parameter space modulo this periodicity, 
which can be regarded as a cylinder or, equivalently, a twice-punctured sphere. 
To obtain the SKMS, one further removes three points on the cylinder corresponding to loci where the model is ill-defined, up to $2\pi$ shifts. 
In particular, the resulting SKMS is a five-punctured sphere.

The regions where the absolute value of the FI parameter is large correspond to the two phases of the model, namely the Calabi--Yau threefolds $\bV(s_+)$ and $\bV(s_-)$. 
Homotopy classes of paths connecting points in these regions (denoted $m_+$ and $m_-$ in Figure~\ref{fig:SKSM}) give rise to the physical notion of windows. 
For the GLSM considered in this paper, one can identify four fundamental such windows (corresponding to paths A, B, C and D in Figure~\ref{fig:SKSM}).

In this paper, we construct four window \emph{categories} to establish derived equivalences for the flop of type $A_4^G$, which we expect to correspond to the four fundamental windows in the physical sense. 
We hope that further investigation of this example from the perspective of physics will be pursued, 
and that the mathematical results in this paper will contribute to further deepening the relation between physics and mathematics.
\end{rem}

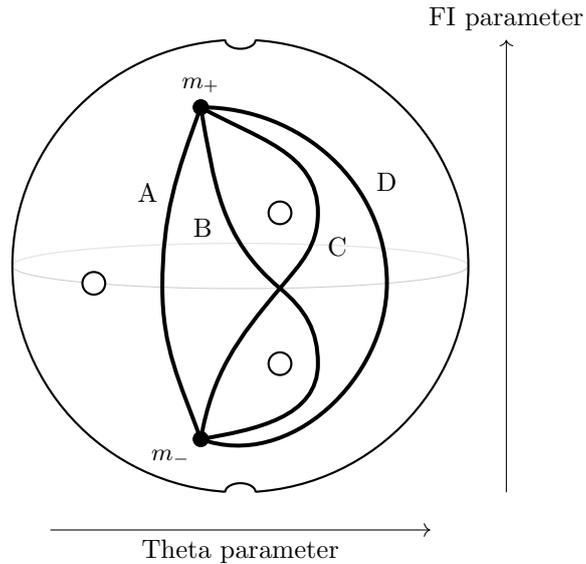
\begin{figure} 
\begin{tikzpicture}
  \def\c{3}
  \def\r{0.2}
  \def\p{0.1}
  \def\m{6}

  \draw[black!10, line width=0.5pt]
    plot[domain=0:180, smooth, variable=\t]
      ({\c*cos(\t)}, {\c*\p*sin(\t)});
  \draw[black!20, line width=0.3pt]
    plot[domain=180:360, smooth, variable=\t]
      ({\c*cos(\t)}, {\c*\p*sin(\t)});

  \coordinate (P) at ({3*cos(230)}, {0.3*sin(230)});
  \draw[fill=white, thick] (P) circle [radius=0.15];
  
  \draw[thick] (0,0) circle (\c);
  \fill[white] (0,\c) circle (\r); 
  \fill[white] (0,-\c) circle (\r);
  \draw[thick]
    plot[domain=180:360, smooth, variable=\t]
      ({\r*cos(\t)}, {\c+\m*\r*\p*sin(\t)});  
  \draw[thick]
    plot[domain=180:360, smooth, variable=\t]
      ({\r*cos(\t)}, {-\c-\m*\r*\p*sin(\t)});  

  \coordinate (Q) at ({3*cos(280)}, {0.3*sin(280)});
  \coordinate (Qup) at ($(Q)+(0,1)$);
  \coordinate (Qdown) at ($(Q)+(0,-1)$);
  \draw[fill=white, thick] (Qup) circle [radius=0.15];
  \draw[fill=white, thick] (Qdown) circle [radius=0.15];
  
  \coordinate (M) at ({3*cos(260)}, {0.3*sin(260)});
  \coordinate (M+) at ($(M)+(0,2.4)$);
  \coordinate (M-) at ($(M)+(0,-2)$);
  \draw[fill=black] (M+) circle [radius=0.1] node[above=2pt] {\small $m_+$};
  \draw[fill=black] (M-) circle [radius=0.1] node[below left=0.5pt] {\small $m_-$};
    
  \coordinate (b) at ({3*cos(250)}, {0.3*sin(250)});
  \coordinate (a) at ($(Qdown)+(0.5,0)$);
  \coordinate (c) at ($(Qup)+(0.5,0)$);
  \coordinate (d) at ({3*cos(310)}, {0.3*sin(310)});
  
  \draw[line width=1.5pt] 
  (M+) to[out=250,in=90,relative=false] node[auto=right, pos=0.6]{A} (b) to[out=270,in=110,relative=false] (M-)
  ;
  \draw[line width=1.5pt] 
  (M+) to[out=280,in=140,relative=false] node[auto=right, pos=0.5]{B} (Q) to[out=320,in=90,relative=false] (a) to[out=270,in=10,relative=false] (M-)
  ;
  \draw[line width=1.5pt] 
  (M+) to[out=330,in=90,relative=false]  (c) to[out=270,in=50,relative=false] node[auto=left, pos=0.15]{C} (Q) to[out=230,in=80,relative=false] (M-)
  ;
  \draw[line width=1.5pt] 
  (M+) to[out=0,in=90,relative=false] node[auto=left, pos=0.7]{D} (d) to[out=270,in=340,relative=false] (M-)
  ;
  
  \draw[->] (3.5,-3) to (3.5,3) node[above]{FI parameter};
  \draw[->] (-2.5,-3.5) to node[auto=right]{Theta parameter} (2.5,-3.5) ;
\end{tikzpicture} 
\caption{Sketch of the SKMS with paths corresponding to windows.}
\label{fig:SKSM}
\end{figure}

\begin{rem} \label{rem:PFeq}
Another aspect --  related to Remark~\ref{rem:Hori} -- of the appearance of four windows is the perspective offered by mirror symmetry between compact Calabi--Yau manifolds. Let us explain this point of view in our example.

In our case, the compact Calabi--Yau threefolds associated with the flop belong to the family $\mathcal X_{25}$ of complete intersections of two Grassmannian translates (c.f.~\cite[Section~5]{ottemrennemo}), 
whose mirror family is conjecturally governed by Picard--Fuchs equation no.~101 in the database \cite{vEvS} (see \cite[Section 4]{KapPfaffian}). 
This equation has two points of maximal unipotent monodromy (MUM) exchanged by an automorphism of the equation, 
which is related to the existence of derived equivalences between pairs of elements of the family. 
In our case, these are pairs $(\mcD(\bV(s_+)),\mcD(\bV(s_-)))$. 
Moreover, the equation has in total six singular points, namely the two MUM points, three conifold singularities and one apparent singularity. Since the latter does not induce any monodromy,
we can assume that the complex moduli space is supported on a five-punctured sphere. 
By mirror symmetry, 
the complex moduli space of $\mathcal{Y}$ should correspond to the SKMS of elements of $\mathcal{X}_{25}$. 
By homological mirror symmetry, the monodromy operators of the Picard--Fuchs equation should correspond to autoequivalences of the derived category of elements of $\mathcal{X}_{25}$. 
On the other hand, as written in Remark~\ref{rem:Hori}, the four windows provide four equivalences between $\mcD(\bV(s_+))$ and $\mcD(\bV(s_-))$, 
which correspond to four paths in the SKMS connecting $\bV(s_+)$ and $\bV(s_-)$.

By composing any such equivalence with the inverse of another, we obtain autoequivalences of $\mcD(\bV(s_+))$ corresponding to loops in the SKMS. 
These loops correspond to loops around singular points of the complex moduli space of the mirror which in turn induce monodromy operators around the three non-MUM degeneracy points of the Picard--Fuchs equation. 
However, the precise rule associating autoequivalences of $\mcD(\bV(s_+))$ with monodromy operators on the mirror side remains to be determined.
\end{rem}

\subsection*{Structure of the paper}
\S\ref{sect:tilting prelim} reviews the notions of NCCRs and tilting objects, which constitute fundamental tools in the construction of derived equivalences, and provides several auxiliary lemmas that will be useful in finding tilting objects.
\S\ref{sec:excceptional_collection} is devoted to the exceptional collections on the Grassmannian that serve as the starting point for our construction of tilting bundles. 
The collections required for our purposes are not among the standard ones: 
they are obtained by applying mutations to Kapranov's collection.
\S\ref{sect:geom of flop} recalls the construction of flops via GIT and summarizes their geometry using diagrams.
\S\ref{sec:windows} develops the central part of the paper, namely the construction of tilting bundles.
\S\ref{sect:conclusion} presents the main results, explaining the consequences of the existence of such bundles for window categories and derived equivalences.
\S\ref{sect:comparisonHL} provides a comparison of our approach with general window category theory, in which experts in this field may be interested.
Finally, Appendix~\ref{sect:fffunctors} gives some general facts concerning fully faithful functors, 
and Appendix \ref{app:code} collects the scripts supporting the computations in \S\ref{sec:windows}.

\begin{NaC}
We work over $\C$, and adopt the following notations.
\begin{enumerate}
\item[$\bullet$] $\mcD(X) \coloneqq \Db(\coh X)$.
\item[$\bullet$] $\mcD(A) \coloneqq \Db(\mod A)$, where $\mod A$ denotes the category of finitely generated right modules over a Noetherian algebra $A$. 
\item[$\bullet$] $\refl(X)$ : the category of reflexive sheaves over a normal variety $X$.
\item[$\bullet$] $\refl(R)$ : the category of reflexive modules over a normal domain $R$.
\item[$\bullet$] $\thick(x) \subset \scrD$ : the smallest thick subcategory of $\scrD$ containing $x \in \scrD$.
\item[$\bullet$] $\Gr(r,N)$ : the Grassmannian of $r$-dimensional subspaces of $\C^N$.
\item[$\bullet$] $\mcU_r$ : the rank $r$ universal subbundle over $\Gr(r,N)$.
\item[$\bullet$] $\mcQ_{N - r}$ : the rank $N - r$ universal quotient bundle over $\Gr(r,N)$.
\item[$\bullet$] $V(\chi)$ : the irreducible representation of $\GL(3)$ with the dominant weight $\chi$.
\item[$\bullet$] $\P(\mcE) \coloneqq \Proj \Sym^{\bullet} \mcE$ : the projectivization of $\mcE$.
\item[$\bullet$] $\Tot(\mcE) = \mathbb{V}(\mcE^{\vee}) \coloneqq \Spec \Sym^{\bullet} \mcE^{\vee}$ : the total space of $\mcE$.
\end{enumerate}
\end{NaC}

\subsection*{Acknowledgements}
W.D., W.H., and M.R.~are grateful to Conan Naichung Leung and IMS at CUHK for organising the meeting that began this collaboration. 
The authors would like to thank Kentaro Hori for sharing many valuable insights from physics, 
which in particular greatly contributed to Remarks~\ref{rem:Hori} and~\ref{rem:PFeq}.
W.H.~would also like to thank Hayato Morimura, and Yukinobu Toda for very helpful conversations. 
M.K.~would like to thank Tymoteusz Chmiel for insight about the Picard-Fuchs equation on the mirror side.  
M.R.~would like to express his gratitude to Ed Segal and Ying Xie for useful insights.

W.D.~was supported by Yau MSC, Tsinghua~U., BIMSA, and China TTP.
W.H.~was supported by World Premier International Research Center Initiative (WPI), MEXT, Japan, 
and by JSPS KAKENHI Grant Number JP24K22829. 
M.K.~was supported by the Polish National Sciences Center project number 2024/55/B/ST1/02409. 
M.R.~was supported by Fonds voor Wetenschappelijk Onderzoek -- Vlaanderen (FWO, Research Foundation -- Flanders) -- G0D9323N, and by the European Research Council (ERC) grant agreement No.~817762.

\section{NCCRs and tilting objects} \label{sect:tilting prelim}

This section gives the definition and fundamental properties of noncommutative crepant resolutions (NCCRs) and tilting objects, which are the main tools in this paper.
Let us start by recalling the definition of noncommutative crepant resolution, 
which serves as a noncommutative analog of crepant resolution.

All rings and schemes in this section are assumed to be  Noetherian, and all schemes separated.

\begin{defi}[{\cite{VdB_NCCR}}]
Let $R$ be a normal Gorenstein domain, and $M$ a reflexive \mbox{$R$-module}.
We say that $M$ gives a \textit{noncommutative crepant resolution (NCCR)} $A = \End_R(M)$ of $R$,
if $A$ has finite global dimension and $A$ is Cohen-Macaulay as an $R$-module.
\end{defi}

Usually the existence and a construction of an NCCR for a given example of $R$ is not obvious.
If $R$ admits a crepant resolution in the usual sense with a tilting complex, as defined below, 
then the existence of an NCCR follows.

\begin{defi} \label{def:tilting}
Let $Z$ be a scheme and $\scrT \in \Perf(Z)$ a perfect complex.
\begin{enumerate}
\item[(1)] $\scrT$ is said to be \textit{pretilting} if $\Ext^i_Z(\scrT,\scrT) = 0$ for all $i \neq 0$.
\item[(2)] $\scrT$ is said to be \textit{tilting} if $\scrT$ is pretilting and generates the unbounded derived category $\D(\Qcoh Z)$, i.e.~if $F \in \D(\Qcoh Z)$ satisfies $\RHom_Z(\scrT, F) = 0$, then $F = 0$.
\end{enumerate}
If $\scrT$ is (quasi-isomorphic to) a locally free sheaf of finite rank, $\scrT$ is called a \textit{(pre)tilting bundle}.
\end{defi}

\begin{rem} \label{remark generation}
It is known by \cite[Theorem~2.1.2 and Theorem~3.1.1]{bondalVdB_generator} that an object $\scrT \in \Perf(Z)$ is a generator of $\D(\Qcoh Z)$ if and only if $\scrT$ is a classical generator of $\Perf(Z)$.
Assume in addition that $Z$ is projective over an affine scheme,
and let $\scrT$ be a pretilting object over $Z$.
Then the condition of $\scrT$ to be a generator of $\D(\Qcoh Z)$ is also equivalent to the condition that $\scrT$ generates $\D^-(\coh Z)$.
See \cite[Lemma 2.2]{hara_def}.
\end{rem}

\begin{prop} \label{tilting prop}
Let $R$ be a normal Gorenstein domain and $f \colon X \to \Spec R$ a projective morphism.
Assume that there exists a tilting bundle $\scrT$ over $X$.
\begin{enumerate}
\item[\rm (1)] Then the functor
\[ \RHom_X(\scrT, -) \colon \mcD(X) \to \mcD( \End_X(\scrT)) \]
gives an exact equivalence of $R$-linear triangulated categories.
\end{enumerate}
Assume in addition that $f \colon X \to \Spec R$ is a small resolution.
Then the following hold.
\begin{enumerate}
\item[\rm (2)] The module $f_*\scrT$ is reflexive and the natural homomorphisms give isomorphisms
\[ \End_X(\scrT) \simeq \End_R(f_*\scrT) \]
of $R$-algebras.
\item[\rm (3)] The reflexive $R$-module $f_*\scrT$ gives an NCCR of $R$.
\end{enumerate}
\end{prop}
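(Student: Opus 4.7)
Part (1) follows from the standard tilting formalism, essentially due to Bondal--Van den Bergh. Setting $A \coloneqq \End_X(\scrT)$, I would consider the adjoint pair with left adjoint $(-)\otimes^{\LL}_A \scrT \colon \mcD(A) \to \mcD(X)$ and right adjoint $\RHom_X(\scrT,-) \colon \mcD(X) \to \mcD(A)$, which is well-defined because $\scrT$ is perfect. The pretilting hypothesis $\Ext^{>0}_X(\scrT,\scrT) = 0$ forces the unit map to be an isomorphism on $A \in \mcD(A)$; a devissage via distinguished triangles propagates this to all of $\Perf(A)$, showing the left adjoint is fully faithful. Its essential image is then a thick subcategory of $\mcD(X)$ containing $\scrT$, and by Remark~\ref{remark generation} (applied in the projective-over-affine setting, where one also uses that $X$ is smooth so that $\Perf(X) = \mcD(X)$) this image is all of $\mcD(X)$. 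The $R$-linearity is automatic from the construction.

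For part (2), smallness of $f$ means that $f$ is an isomorphism over an open $U \subset \Spec R$ with $\codim(\Spec R \setminus U) \geq 2$. Since $\scrT$ is locally free on the smooth scheme $X$, both $f_*\scrT$ and $f_*(\scrT^\vee \otimes \scrT) = \End_X(\scrT)$ are locally free on $U$ and satisfy depth $\geq 2$ along its complement, hence are reflexive by Serre's criterion. The natural algebra homomorphism $\End_X(\scrT) \to \End_R(f_*\scrT)$ is an isomorphism over $U$; being a morphism of reflexive $R$-modules that agrees in codimension $\leq 1$, it is an isomorphism globally.

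Part (3) splits into two assertions. Finite global dimension of $A$ follows immediately from (1) together with smoothness of $X$: one has $\mcD(X) = \Perf(X)$, so the equivalence gives $\mcD(A) = \Perf(A)$, equivalent to $\operatorname{gldim} A < \infty$. For the Cohen--Macaulay property the plan is to apply Grothendieck--Serre duality. Pretilting together with affineness of $\Spec R$ forces $R^i f_*(\scrT^\vee \otimes \scrT) = 0$ for $i > 0$, so $Rf_*(\scrT^\vee \otimes \scrT) = A$ in $\mcD(\Spec R)$. Smallness of $f$ between Gorenstein schemes yields crepancy $\omega_X \simeq f^*\omega_R$; combining Grothendieck duality with the projection formula gives
\[
\RHom_R(A,\omega_R) \simeq Rf_*\bigl(\scrT^\vee \otimes \scrT \otimes \omega_X\bigr) \simeq A \otimes_R \omega_R,
\]
concentrated in degree $0$. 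Hence $\Ext^{>0}_R(A,\omega_R) = 0$, which for Gorenstein $R$ is equivalent to $A$ being maximal Cohen--Macaulay as an $R$-module.

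The main technical obstacle is the Cohen--Macaulay assertion in (3), since it requires simultaneously invoking higher pushforward vanishing (from pretilting), the identification $\omega_X \simeq f^*\omega_R$ (from smallness between Gorenstein schemes), and Grothendieck--Serre duality; the remaining assertions follow from rather formal considerations once the tilting machinery of (1) is in place.
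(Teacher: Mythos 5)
Your overall architecture matches the paper's, which simply delegates: (1) to \cite[Lemma 3.3]{todauehara}, (2) to the equivalence $f_* \colon \refl(X) \xrightarrow{\sim} \refl(R)$ for small maps, and (3) to the standard argument in \cite{iyamawemyssMMA}. Your expansions of (2) and (3) are essentially right: the reflexivity claim in (2) is correct, though the ``depth $\geq 2$ along the complement'' assertion is exactly the point that needs proof --- the clean route is that smallness forces $\Exc(f)$ to have codimension $\geq 2$ in $X$ as well, so $f_*\scrT$ (and $f_*\mathcal{E}\mathit{nd}(\scrT)$) coincide with the pushforward of their restrictions to the big open set $U$, whence reflexivity; this is what the paper's equivalence of reflexive categories encodes. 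Your Grothendieck--Serre duality computation for the Cohen--Macaulay property in (3), using $R^{>0}f_*(\scrT^\vee\otimes\scrT)=0$, crepancy of a small resolution of a Gorenstein base, and the projection formula, is the standard argument and is correct.

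The one genuine gap is in part (1), and it propagates into the first half of (3). Your devissage establishes that $-\otimes^{\LL}_A\scrT$ is fully faithful on $\Perf(A)$ with essential image a thick subcategory containing $\scrT$, hence (using that $\scrT$ classically generates $\Perf(X)=\mcD(X)$) an equivalence $\Perf(A)\xrightarrow{\sim}\mcD(X)$. But the statement of (1) asserts an equivalence with $\mcD(A)=\Db(\mod A)$, and upgrading $\Perf(A)$ to $\Db(\mod A)$ is precisely the assertion that $\operatorname{gldim}A<\infty$ --- which you then ``deduce from (1)'' in part (3). As written this is circular: nothing in the formal adjunction argument shows that an arbitrary finitely generated $A$-module is perfect. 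The standard repair (and the content of the cited lemma of Toda--Uehara, or of Hille--Van den Bergh) is a direct bound $\operatorname{pd}_A M<\infty$ for all finitely generated $M$, obtained by truncating a projective resolution and using that $X$ is smooth of finite dimension to kill the relevant $\Ext$'s; only after that does $\Perf(A)=\Db(\mod A)$ and hence the equivalence of (1) in the stated form follow. A smaller point: you invoke smoothness of $X$ already in (1), whereas the proposition only assumes $X$ smooth from (2) onward (via ``small resolution''); in the paper's applications $X_\pm$ are smooth, so this is harmless, but it should be flagged as an extra hypothesis or handled as in the cited reference.
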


\begin{proof}
(1) follows from \cite[Lemma 3.3]{todauehara}.
Since $f$ is small, the (underived) push-forward $f_*$ gives an equivalence of categories $f_* \colon \refl(X) \xrightarrow{\sim} \refl(R)$
of reflexive modules.
Thus (2) follows.
The remaining statement (3) follows from (2) by a standard argument (c.f.~\cite[Theorem~1.5]{iyamawemyssMMA}).
\end{proof}

\begin{prop} \label{generation criteria}
Let $f \colon X \to \Spec R$ be a projective crepant resolution,
and $\scrT$ a pretilting object.
Then $\scrT$ is tilting (i.e.~a generator) if and only if $\End_X(\scrT)$ has finite global dimension.
\end{prop}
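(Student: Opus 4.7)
My plan is to handle the two implications separately, with the main effort going into the converse direction.

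For the forward direction, if $\scrT$ is tilting then Proposition~\ref{tilting prop}(1) yields an equivalence $\RHom_X(\scrT, -) \colon \mcD(X) \xrightarrow{\sim} \mcD(A)$ with $A = \End_X(\scrT)$. Smoothness of $X$ (being a resolution) gives $\mcD(X) = \Perf(X)$, and transporting this identity through the equivalence gives $\mcD(A) = \Perf(A)$, which is precisely the condition that $A$ has finite global dimension.

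For the converse, assume $A$ has finite global dimension, so that $\mcD(A) = \Perf(A)$; similarly $\mcD(X) = \Perf(X)$ by smoothness. I would consider the functor $\Phi = \RHom_X(\scrT, -) \colon \mcD(X) \to \mcD(A)$, well-defined since $\scrT$ is perfect and $f$ is proper, and its left adjoint $\Psi = - \otimes^{L}_A \scrT \colon \mcD(A) \to \mcD(X)$, where $\scrT$ carries its canonical right $A$-module structure. The pretilting hypothesis gives $\Phi\Psi(A) = \RHom_X(\scrT, \scrT) = A$, so the unit is an isomorphism at the classical generator $A$ of $\mcD(A)$; d\'evissage along triangles extends this to full faithfulness of $\Psi$ on all of $\mcD(A)$. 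Consequently $\thick(\scrT) \subseteq \Perf(X)$ is left-admissible with left adjoint $\Psi\Phi$, producing a semiorthogonal decomposition
\[ \Perf(X) = \langle \thick(\scrT)^\perp, \thick(\scrT) \rangle. \]
By Remark~\ref{remark generation} it then suffices to show that $\thick(\scrT)^\perp = 0$.

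This last step is the main obstacle: it is not formal from pretilting plus finite global dimension, as the analogue can fail without the crepant assumption (e.g.~$\mcO \in \Perf(\P^1)$ is pretilting with $\End = \C$ of finite global dimension, yet is not a generator). The crepant hypothesis $\omega_{X/R} \simeq \mcO_X$ enters through relative Grothendieck--Verdier duality, which supplies a natural isomorphism $\RHom_R(\RHom_X(F, G), R) \simeq \RHom_X(G, F)$ for $F, G \in \Perf(X)$, so that $\thick(\scrT)^\perp$ coincides with the left orthogonal ${}^{\perp}\thick(\scrT)$ and the semiorthogonal decomposition above is promoted to an orthogonal direct sum. The relative Calabi--Yau structure $\omega_{X/R} \simeq \mcO_X$, combined with the irreducibility of $X$ (which follows from $\Spec R$ being integral and $f$ being a resolution), then rules out nontrivial orthogonal direct sum decompositions of $\Perf(X)$ by a standard Bondal--Orlov-type argument, so $\thick(\scrT)^\perp = 0$, $\thick(\scrT) = \Perf(X)$, and $\scrT$ classically generates $\Perf(X)$, hence is tilting.
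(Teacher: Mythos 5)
Your argument is correct and follows essentially the same route as the paper's proof: both reduce the converse to realizing $\thick(\scrT) \simeq \Perf(\End_X(\scrT))$ as an admissible subcategory of $\Perf(X) = \mcD(X)$ via the adjoint pair $(-\otimes^{L}_A \scrT, \RHom_X(\scrT,-))$ and the isomorphism $\Phi\Psi \simeq \id$, and then use crepancy to rule out a nontrivial semiorthogonal decomposition. The only difference is that where the paper cites \cite[Lemma~A.5]{SpenkoVdB_toric2} for this last step, you prove it inline --- relative Grothendieck--Verdier duality with $\omega_{X/R}\simeq\mcO_X$ promotes the decomposition to a completely orthogonal one, which connectedness of $X$ then forces to be trivial --- and this is precisely the standard proof of that lemma; your $\P^1$ example correctly isolates where crepancy is genuinely needed (the only blemish is the terminological slip ``left-admissible'': since $\Psi$ is left adjoint to $\Phi$, the image $\thick(\scrT)$ is right-admissible, consistent with the decomposition $\langle \thick(\scrT)^{\perp}, \thick(\scrT)\rangle$ you write).
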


\begin{proof}
The ``only if'' part follows from Proposition~\ref{tilting prop}.
Assume that $A = \End_X(\scrT)$ has finite global dimension.
Since $f$ is a proper morphism, one can consider the following two functors.
\begin{align*}
\Phi \coloneqq \RHom(\scrT, -) &\colon \D^-(\coh X) \to \D^-(\mod A) \\
\Psi \coloneqq - \otimes_A^{L} \scrT &\colon \D^-(\mod A) \to \D^-(\coh X)
\end{align*}
The pretilting property of $\scrT$ yields $\Phi \circ \Psi \simeq \id_{\D^-(\mod A)}$ (c.f.~\cite[Lemma 3.3]{todauehara}).
Note that $\Phi$ restricts to give the functor $\mcD(X) \to \mcD(A)$.
In addition, since $A$ has finite global dimension, the functor $\Psi$ sends $\mcD(A)$ to $\mcD(X)$.
Therefore, $\mcD(A)$ is equivalent to an admissible subcategory of $\mcD(X)$.
Now the crepancy of $f$ implies that the category $\mcD(X)$ has no non-trivial semiorthogonal decomposition (c.f.~\cite[Lemma~A.5]{SpenkoVdB_toric2}).
Thus, the functor 
\[ \Psi|_{\mcD(A)} \colon \mcD(A) \to \mcD(X) \]
is essentially surjective, and hence an equivalence.
Note that $\Psi(A) \simeq \scrT$.
Since $A$ is a classical generator of $\mcD(A) = \Perf(A)$, the pretilting object $\scrT$ is a classical generator of $\mcD(X) = \Perf(X)$.
\end{proof}

\begin{lem} \label{lem:dual tilting}
Let $Z$ be a scheme.
If $\scrT$ is a tilting object over $Z$, then so is the derived dual $\scrT^{\vee} \coloneqq \RlcHom_Z(\scrT, \mcO_Z)$.
\end{lem}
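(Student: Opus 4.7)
The plan is to verify the two defining properties of a tilting object---pretilting and generation---separately for $\scrT^{\vee}$. First I would note that $\scrT^{\vee}$ is itself a perfect complex, since derived duality preserves perfectness, so the statement at least makes sense.

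For the pretilting part, the idea is to reduce $\RHom_Z(\scrT^{\vee},\scrT^{\vee})$ back to $\RHom_Z(\scrT,\scrT)$. Using the standard identification $\RlcHom_Z(\mcF,\mcG) \simeq \mcF^{\vee} \otimes^{\LL} \mcG$ available for $\mcF \in \Perf(Z)$, together with biduality $(\scrT^{\vee})^{\vee} \simeq \scrT$ for perfect complexes, I would compute
\[ \RlcHom_Z(\scrT^{\vee},\scrT^{\vee}) \;\simeq\; \scrT \otimes^{\LL} \scrT^{\vee} \;\simeq\; \RlcHom_Z(\scrT,\scrT). \]
Applying $\RGamma$ then gives $\RHom_Z(\scrT^{\vee},\scrT^{\vee}) \simeq \RHom_Z(\scrT,\scrT)$, so the Ext-vanishing hypothesis for $\scrT$ transfers directly to $\scrT^{\vee}$.

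For generation, the plan is to invoke Remark~\ref{remark generation}, which says that a perfect complex generates $\D(\Qcoh Z)$ if and only if it classically generates $\Perf(Z)$. One then observes that derived duality defines an anti-equivalence $(-)^{\vee}\colon \Perf(Z)^{\op}\xrightarrow{\sim} \Perf(Z)$, with inverse given by itself via biduality. Such an anti-equivalence carries shifts, cones, and direct summands to the same operations, so it sends classical generators to classical generators. Applying this to $\scrT$, a classical generator of $\Perf(Z)$ by assumption, I conclude that $\scrT^{\vee}$ is also a classical generator, and hence a generator of $\D(\Qcoh Z)$.

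I do not anticipate a serious obstacle here; the whole argument is formal, relying only on duality for perfect complexes and the Bondal--Van den Bergh criterion. The only subtlety is that tilting is formulated in terms of $\D(\Qcoh Z)$ whereas duality is naturally an anti-equivalence of $\Perf(Z)$, and this is precisely what Remark~\ref{remark generation} bridges.
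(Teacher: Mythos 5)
Your proposal is correct and follows essentially the same route as the paper: the paper also treats the pretilting property as immediate (you merely spell out the tensor-hom computation that makes it "clear") and establishes generation by observing that $\RlcHom_Z(-,\mcO_Z)$ is a contravariant autoequivalence of $\Perf(Z)$, hence preserves classical generators, combined with Remark~\ref{remark generation}. No gaps.
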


\begin{proof}
It is clear that $\scrT^{\vee}$ is contained in $\Perf(Z)$ and is a pretilting object.
Thus it is enough to show that $\scrT^{\vee}$ is a generator.
Since the functor $\RlcHom_Z(-,\mcO_Z)$ is a contravariant autoequivalence of the triangulated category $\Perf(Z)$,
$\scrT^{\vee}$ is a classical generator of $\Perf(Z)$ and hence a generator of $\D(\Qcoh Z)$ by Remark~\ref{remark generation}.
\end{proof}

\section{Exceptional collections}\label{sec:excceptional_collection}

In order to prove that $\mcD(X_+)$ and $\mcD(X_-)$ are equivalent, we will construct suitable subcategories of $\mcD(\mcX)$, for $\mcX$ an Artin stack defined in~\S\ref{sec:git}, and we will prove that they are equivalent to both $\mcD(X_+)$ and $\mcD(X_-)$ using tilting bundles.  
This section explains the exceptional collections required in the construction.

\subsection{Kapranov collection}\label{sect:KapFEC}
First, recall that for $\Gr(3,5)$ we may obtain an exceptional collection \cite{kapranov_grassmannians} by applying the following Schur powers to $\mcU_3^\vee$. We use conventions such that $\mathbb{S}^{1,1,1} \mcU_3^{\vee} \simeq  \wedge^3 \mcU_3^{\vee} = \mcO(1)$, and  $\mathbb{S}^{k,0,0} \mcU_3^{\vee} \simeq  \Sym^k \mcU_3^{\vee}$.

\begin{center}
\begin{tikzpicture}[scale=1.2]

\node at (0,0)   {$\mathbb{S}^{0,0,0}$};
\node at (1,0)   {$\mathbb{S}^{1,1,0}$};
\node at (2,0)   {$\mathbb{S}^{2,2,0}$};
\node at (0.5,0.87) {$\mathbb{S}^{1,0,0}$};
\node at (1.5,0.87) {$\mathbb{S}^{2,1,0}$};
\node at (1,1.73)   {$\mathbb{S}^{2,0,0}$};

\draw[dashed] (2.75,-0.5) -- (2.75,2.25);

\node at (3.5,0.44)   {$\mathbb{S}^{1,1,1}$};
\node at (4.5,0.44)   {$\mathbb{S}^{2,2,1}$};
\node at (4,1.3)     {$\mathbb{S}^{2,1,1}$};

\draw[dashed] (5.25,-0.5) -- (5.25,2.25);

\node at (6,0.87) {$\mathbb{S}^{2,2,2}$};

\end{tikzpicture}
\qquad
\def\dotsize{2pt}
\tdplotsetmaincoords{73}{44} 
\begin{tikzpicture}[scale=0.65,line join=round,tdplot_main_coords,declare function={a=5;}] 
\begin{scope}[canvas is xy plane at z=0,transform shape]
 \path foreach \X [count=\Y] in {A,B,C}
 {(\Y*120:{a/(2*cos(30))}) coordinate(\X)};
\end{scope}
\path (0,0,{a*cos(30)}) coordinate (D);
\draw[gray!40] foreach \X/\Y [remember=\X as \Z (initially D)] in {A/B,B/C,C/D,D/A}
 {(\X) -- (\Z) -- (\Y)};
 
\draw[gray!40] ($ (A)!0.5!(D) $) -- ($ (B)!0.5!(D) $) -- ($ (C)!0.5!(D) $) -- cycle;

\draw[fill=black] (A) circle (\dotsize);
\draw[fill=black] (B) circle (\dotsize);
\draw[fill=black] (C) circle (\dotsize);
\draw[fill=black] (D) circle (\dotsize);

\draw[fill=black] ($ (A)!0.5!(B) $) circle (\dotsize);
\draw[fill=black] ($ (A)!0.5!(C) $) circle (\dotsize);
\draw[fill=black] ($ (A)!0.5!(D) $) circle (\dotsize);
\draw[fill=black] ($ (B)!0.5!(C) $) circle (\dotsize);
\draw[fill=black] ($ (B)!0.5!(D) $) circle (\dotsize);
\draw[fill=black] ($ (C)!0.5!(D) $) circle (\dotsize);

\end{tikzpicture}
\end{center}

The three triangles of Schur powers above may be pictured as successive layers of a tetrahedron, as shown on the right. The Schur powers come from those Young diagrams that fit in a box of height $3$ and width $5-3=2$, as follows.


\def\spacex{3}
\def\scaley{0.75}
\begin{center}
\begin{tikzpicture}[scale=0.3]
\definecolor{mygray}{gray}{0.7}

\newcommand{\block}[3]{
  \begin{scope}[shift={(#1,#2)}]
    \foreach \i in {0,1}{
      \foreach \j in {0,1,2}{
        \pgfmathtruncatemacro{\n}{2*\j+\i}
        \foreach \k in {#3} {
          \ifnum\n=\k
            \fill[mygray] (\i,-\j) rectangle ++(1,-1);
          \fi
        }
        \draw (\i,-\j) rectangle ++(1,-1);
      }
    }
  \end{scope}
}

\block{0}{0}{}
\block{3}{0}{2,4}
\block{6}{0}{2,3,4,5}
\block{1.5}{6*\scaley}{4}
\block{4.5}{6*\scaley}{2,4,5}
\block{3}{12*\scaley}{4,5}

\draw[dashed] (9+0.5*\spacex,13*\scaley) -- (9+0.5*\spacex,-4);

\block{13.5+\spacex}{9*\scaley}{0,2,4,5}
\block{12+\spacex}{3*\scaley}{0,2,4}
\block{15+\spacex}{3*\scaley}{0,2,3,4,5}

\draw[dashed] (20+1.5*\spacex,13*\scaley) -- (20+1.5*\spacex,-4);

\block{22+2*\spacex}{6*\scaley}{0,1,2,3,4,5}

\end{tikzpicture}
\end{center}

\subsection{Our collections} \label{sect:ourFEC}

It turns out that Kapranov's collection is the wrong choice for our purpose
(see Remark~\ref{rem:why not Kapranov}). 
To address this, we produce new collections by a sequence of mutations.

\begin{prop} \label{prop our FEC}
The derived category $\mcD(\Gr(3,5))$ admits the following full strong exceptional collections, for $\mcU_3$ the rank three universal subbundle.
\begin{enumerate}
\item[\rm (1)] $\langle \mcO(-1), \mcU_3^{\vee}(-1), \mcO, \mcU_3^{\vee}, \mcU_3(1), \mcO(1), \mcU_3^{\vee}(1), \mcU_3(2), \mcO(2), \mcO(3) \rangle$.
\item[\rm (2)]  $\langle \mcO(-1), \mcU_3^{\vee}(-1), \Sym^2 \mcU_3^{\vee}(-1), \mcO, \mcU_3^{\vee}, \mcU_3(1), \mcO(1), \mcU_3^{\vee}(1), \mcU_3(2), \mcO(2) \rangle$.
\end{enumerate}
\end{prop}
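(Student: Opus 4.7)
The plan is to derive both collections from Kapranov's classical full strong exceptional collection $\langle \mathbb{S}^{\lambda}\mcU_3^{\vee} \rangle_{\lambda \subseteq 3\times 2}$ on $\Gr(3,5)$ by a sequence of mutations, and to verify strong exceptionality of the endpoint directly via the Borel-Bott-Weil theorem. Since $\Gr(3,5)$ has Picard rank one, tensoring Kapranov's collection by $\mcO(-1)$ gives another full strong exceptional collection, and its two leftmost entries $\mcO(-1), \mcU_3^{\vee}(-1)$ already match those of both target collections. The remaining task is to move the four intermediate Schur-power bundles $\wedge^2\mcU_3^{\vee}(-1)$, $\Sym^2\mcU_3^{\vee}(-1)$, $\mathbb{S}^{2,1,0}\mcU_3^{\vee}(-1)$, $\mathbb{S}^{2,2,0}\mcU_3^{\vee}(-1)$ to the right, replacing them through mutations by the remaining target objects: for collection (1) these should become $\mcO(2), \mcO(3), \mcU_3^{\vee}(1), \mcU_3(2) = \wedge^2\mcU_3^{\vee}(1)$, while collection (2) retains $\Sym^2\mcU_3^{\vee}(-1)$ and drops $\mcO(3)$.

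Each mutation is computed from a short exact sequence of homogeneous bundles whose form is dictated by Borel-Bott-Weil. Since a mutation of a full exceptional collection is full, fullness of (1) and (2) is automatic once a valid mutation path is exhibited. The main verification is then strong exceptionality: for each ordered pair $(E_i, E_j)$ one must show that
\[ \RHom(E_i, E_j) = \RG\bigl(\Gr(3,5),\, E_i^{\vee} \otimes E_j\bigr) \]
vanishes when $i > j$, and is concentrated in degree zero when $i \le j$, with $\Hom(E_i, E_i) = \C$. I would decompose each $E_i^{\vee} \otimes E_j$ into irreducible homogeneous summands via the Littlewood-Richardson rule and apply Borel-Bott-Weil to each summand; each contributes either zero, or a single nonzero cohomology group in a degree equal to the length of the regularising element of the Weyl group.

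The principal obstacle is the volume of the Borel-Bott-Weil verifications -- on the order of one hundred ordered pairs per collection -- although each individual check is mechanical. The workload can be considerably reduced by exploiting Serre duality together with the twist-symmetry $E_j = E_i \otimes \mcO(1)$ observed among sub-blocks of each collection. A more conceptual difficulty is to identify a mutation sequence that lands in a \emph{strong} exceptional collection, not just an exceptional one: mutating the intermediate bundles in the wrong order can introduce objects (such as kernels of evaluation maps between standard bundles) that violate strong exceptionality and would therefore fail to serve the subsequent tilting-bundle construction of \S\ref{sec:windows}.
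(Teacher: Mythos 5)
Your overall strategy is the paper's: start from Kapranov's collection, use mutations to secure fullness, and reduce (strong) exceptionality to Borel--Weil--Bott computations. However, the step you yourself identify as the principal conceptual difficulty --- exhibiting a mutation path that actually lands on the stated collections --- is precisely the content of the paper's proof, and your sketch of it is not the one that works. The paper does not right-mutate four middle objects of the $\mcO(-1)$-twisted Kapranov collection; it left-mutates the three Schur powers $\Sym^2\mcU_3^\vee$, $\mathbb{S}^{2,1}\mcU_3^\vee$, $\mathbb{S}^{2,2}\mcU_3^\vee$ (one edge of the tetrahedron) to the front of the \emph{untwisted} Kapranov collection, turning them into $\mcO(-1)$, $\mcU_3^\vee(-1)$, $\Sym^2\mcU_3^\vee(-1)$ respectively; since $\wedge^2\mcU_3^\vee\cong\mcU_3(1)$, the result is exactly collection (2). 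Collection (1) is then obtained by one further left mutation taking $\Sym^2\mcU_3^\vee(-1)$ to $\mcO(-2)$, followed by a right mutation of $\mcO(-2)$ through the entire collection, which by the helix property yields $\mcO(-2)\otimes\omega_{\Gr}^\vee\cong\mcO(3)$. The mutations are realized by three explicit four-term exact sequences (Lemma \ref{lem.seqs}), obtained from the twisted Lascoux resolution and split into pairs of short exact sequences. Your proposal asserts that such sequences exist (``dictated by Borel--Bott--Weil'') but does not produce them, and your guessed correspondence --- e.g.\ that $\mathbb{S}^{2,2}\mcU_3^\vee(-1)$ should become one of $\mcO(2)$, $\mcO(3)$, $\mcU_3^\vee(1)$, $\mcU_3(2)$ under right mutation --- is unverified and does not match what the actual mutations give. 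As it stands, this is a gap: without the concrete sequences the fullness argument is not complete.

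On the other hand, your insistence on a direct pairwise Borel--Weil--Bott verification of strong exceptionality is well taken: mutations preserve fullness and exceptionality but not strongness, and the paper's written proof is terse on this point, so the pairwise check (made manageable by Serre duality and the twist symmetry you note, and mechanizable as in Appendix~\ref{app:code}) is genuinely part of what is needed. In short, your plan would go through once the mutation sequences of Lemma \ref{lem.seqs} are supplied; that lemma is the missing ingredient.
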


In order to prove this proposition, let us prepare the following lemma.

\begin{lem}\label{lem.seqs}
There are exact sequences of vector bundles over $\Gr(3,5)$ as follows, writing $V\simeq\C^5$ and $\mcU=\mcU_3$ for brevity.
\begin{enumerate}
\item[\rm (1)] $0 \longrightarrow \mcO (-1) \longrightarrow \mcO  \otimes \wedge^3 V \longrightarrow \mcU^\vee \otimes \wedge^4 V \longrightarrow \mathbb{S}^2 \mcU^{\vee} \longrightarrow 0$.

\item[\rm (2)] $0 \longrightarrow \mcU^\vee (-1) \longrightarrow  \mcO \otimes \wedge^2 V \longrightarrow \wedge^2 \mcU^\vee \otimes \wedge^4 V \longrightarrow \mathbb{S}^{2,1} \mcU^\vee \longrightarrow 0$.

\item[\rm (3)] $0 \longrightarrow \mathbb{S}^2 \mcU^\vee  (-1) \longrightarrow \mcU^\vee \otimes \wedge^2 V \longrightarrow \wedge^2 \mcU^\vee \otimes \wedge^3 V \longrightarrow \mathbb{S}^{2,2} \mcU^\vee \longrightarrow 0$.
\end{enumerate}
\end{lem}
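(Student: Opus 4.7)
The plan is to write each sequence explicitly as a complex of $GL(V)$-equivariant bundles on $\Gr(3,5) = GL(V)/P$, and then to verify exactness by a rank count combined with a pointwise check, which is reduced to a single point by homogeneity. All maps can be constructed from the natural morphisms available on the tautological sequence $0 \to \mcU \to V \otimes \mcO \to \mcQ \to 0$: the inclusion and its dual surjection $V^\vee \otimes \mcO \twoheadrightarrow \mcU^\vee$, the coevaluations $\mcO \to \wedge^k \mcU \otimes \wedge^k \mcU^\vee$, exterior multiplication and its adjoint (i.e.\ Poincar\'e duality $\wedge^i V \simeq \wedge^{5-i} V^\vee \otimes \det V$), and projection onto Schur summands such as $\mcU^\vee \otimes \mcU^\vee \twoheadrightarrow \Sym^2 \mcU^\vee$.

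For sequence (1), for instance, I would take the outer maps to be $\wedge^3$ of the tautological inclusion $\mcU \hookrightarrow V$, and the composition $\wedge^4 V \otimes \mcU^\vee \simeq V^\vee \otimes \mcU^\vee \otimes \det V \twoheadrightarrow \mcU^\vee \otimes \mcU^\vee \otimes \det V \twoheadrightarrow \Sym^2 \mcU^\vee \otimes \det V$, while the middle map is $\omega \mapsto \sum_j (\omega \wedge u_j) \otimes u_j^*$ for any local frame $\{u_j\}$ of $\mcU$ (which is globally well-defined by the coevaluation $\mcO \to \mcU \otimes \mcU^\vee$). Sequences (2) and (3) follow the same template; for the first map of (3), for example, one uses the coproduct $\wedge^3 \mcU \to \mcU \otimes \wedge^2 \mcU$ together with the contraction $\Sym^2 \mcU^\vee \otimes \mcU \to \mcU^\vee$ to build $\Sym^2 \mcU^\vee \otimes \det \mcU \to \mcU^\vee \otimes \wedge^2 V$. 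The vanishing of each pair of consecutive compositions is automatic: either $\wedge^k \mcU$ wedges to zero with elements of $\mcU$, or an ``antisymmetrize then symmetrize'' composition factors through a different irreducible Schur component.

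Exactness at the outer positions is then clear (the left map is injective by construction, and the right map inherits surjectivity from $V^\vee \twoheadrightarrow \mcU^\vee$), so the substantive step is exactness at the two interior terms. There I would argue by rank count: the image of the incoming map and the kernel of the outgoing one turn out to be subbundles of the same constant rank inside the middle term, and the evident containment of one in the other then forces equality. The rank tabulation can be done either by decomposing each term into irreducibles of the stabilizer parabolic $P$ and tracing Schur summands through the maps, or by restricting everything to the standard affine chart of $\Gr(3,5)$, where $\mcU$ has an explicit frame and the complex becomes a sequence of polynomial matrices whose ranks are immediate. The main obstacle will be the bookkeeping of Schur summands in the middle terms, in particular the splitting $\wedge^2 \mcU^\vee \otimes \wedge^2 \mcU^\vee = \mathbb{S}^{2,2}\mcU^\vee \oplus \mathbb{S}^{2,1,1}\mcU^\vee$ relevant to sequence (3), where one must confirm that the complex indeed lands in the $\mathbb{S}^{2,2}$ summand. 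A cleaner conceptual alternative that bypasses this bookkeeping is to recognize each sequence as a Schur-type complex associated to the morphism $\mcU \hookrightarrow V \otimes \mcO$ in the sense of Akin--Buchsbaum--Weyman, or equivalently as a push-forward along $\Fl(1,3;5) \to \Gr(3,5)$ of a relative Koszul complex, with higher push-forwards vanishing by Borel--Bott--Weil.
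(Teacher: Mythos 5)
Your proposal is correct in outline, but its primary route is genuinely different from the paper's. The paper does not construct the differentials or check exactness by hand: it observes that the three sequences are duals of twists of instances of the twisted Lascoux resolution associated to the tautological morphism $\mcU \to \mcO \otimes V$ (citing Weyman, and Donovan--Segal for the dual Grassmannian $\Gr(V^\vee,3)\cong\Gr(3,V)$, whose universal quotient bundle is $\mcU^\vee$), writes down those three resolutions, and then dualizes and twists by $\mcO(-1)$. That is precisely the ``cleaner conceptual alternative'' in your final sentence --- the Akin--Buchsbaum--Weyman Schur complexes you mention are the same objects --- so your closing remark is, in effect, the paper's entire proof. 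Your main argument is instead an elementary, self-contained verification: build the equivariant maps from the tautological sequence, kill consecutive compositions by Schur-functor/(anti)symmetrization arguments, and prove exactness by a fiberwise rank count reduced to a single point by homogeneity. This is sound: the differentials you describe are the right ones, the vanishing of $d\circ d$ does follow as you indicate, and at a point each term splits into Levi-isotypic pieces in $\mcU_0$ and $V/\mcU_0$, so Schur's lemma pins down every component of every map up to scale and the rank count becomes a finite, unambiguous linear-algebra check (one only needs to confirm the relevant scalars are nonzero). What your route buys is explicitness --- you know the maps, which the citation does not give you --- at the cost of the bookkeeping you rightly flag, e.g.\ the splitting $\wedge^2\mcU^\vee\otimes\wedge^2\mcU^\vee=\mathbb{S}^{2,2}\mcU^\vee\oplus\mathbb{S}^{2,1,1}\mcU^\vee$ in sequence (3); what the citation buys is a two-line proof.
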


\begin{proof} 

The sequences below are exact on $\Gr(3,5)$, where all morphisms are induced by the tautological morphism $\mcU=\mcU_3 \to  \mcO\otimes{V}$ of  bundles on $\Gr(3,V)$. This follows from the twisted Lascoux resolution~\cite{weyman},
see for instance \cite[Theorem~A.7]{Donovan_Segal_2014} for the Grassmannian of quotients $\Gr(V^\vee,3)\cong\Gr(3,V)$, noting that the universal quotient bundle for $\Gr(V^\vee,3)$ corresponds to $\mcU_3^\vee$ under the latter isomorphism.
\begin{enumerate}
\item[\rm (1)] $0 \longrightarrow \mathbb{S}^2 \mcU (-1) \longrightarrow \mcU (-1) \otimes \wedge^4 V^\vee \longrightarrow \mcO (-1) \otimes \wedge^3 V^\vee \longrightarrow \mcO \longrightarrow 0$.
\item[\rm (2)] $0 \longrightarrow \mathbb{S}^{2,1} \mcU (-1) \longrightarrow \wedge^2 \mcU (-1) \otimes \wedge^4 V^\vee \longrightarrow \mcO (-1) \otimes \wedge^2 V^\vee \longrightarrow \mcU \longrightarrow 0$.
\item[\rm (3)] $0 \longrightarrow \mathbb{S}^{2,2} \mcU (-1) \longrightarrow \wedge^2 \mcU (-1) \otimes \wedge^3 V^\vee \longrightarrow \mcU (-1) \otimes \wedge^2 V^\vee \longrightarrow \mathbb{S}^2 \mcU \longrightarrow 0$.
\end{enumerate}
Dualizing and twisting by $\mcO(-1)$ then gives the required sequences.
\end{proof}

\begin{proof}[Proof of Propositon~\ref{prop our FEC}]

To obtain~$(2)$, we mutate three objects of Kapranov's collection, corresponding to one edge of the above tetrahedron, to give three new objects as shown below. For readability, we omit trailing zeroes in the Schur powers. The required mutation sequences are obtained by splitting each of the sequences from Lemma~\ref{lem.seqs} into two short exact sequences.


\begin{center}
\begin{tikzpicture}[scale=1.4]

\draw (-1.2,0) node[inner sep=+6pt] (a)  {$\mcO(-1)$};
\draw (0,0) node (b)  {$\mcO$};
\draw (1.2,0) node (c)  {$\wedge^2\mcU^{\vee}$};
\draw (2.4,0) node[inner sep=+4pt] (d)  {$\mathbb{S}^{2,2}\mcU^{\vee}$};
\draw (-0.6,0.87) node (e) {$\mcU^{\vee}(-1)$};
\draw (0.6,0.87) node (f) {$\mcU^{\vee}$};
\draw (1.8,0.87) node (g) {$\mathbb{S}^{2,1}\mcU^{\vee}$};
\draw (1.2,1.73)  node (h) {$\mathbb{S}^2\mcU^{\vee}$};
\draw (0,1.73)  node (i) {$\mathbb{S}^2\mcU^{\vee}(-1)$};

\draw[rotate=53] (0.4,0.95) ellipse (45pt and 20pt);
\draw[rotate=-53] (0.37,1.95) ellipse (45pt and 18pt);

\draw[->] (node cs:name=d, anchor=south west) to[bend left=40] node[yshift=-8pt]{mutation} (node cs:name=a, anchor=south east);

\draw[dashed] (3,-0.5) -- (3,2.25);

\node at (3.5,0.44)   {$\mcO(1)$};
\node at (4.5,0.44)   {$\wedge^2 \mcU^{\vee}(1)$};
\node at (4,1.3)     {$\mcU^{\vee}(1)$};

\draw[dashed] (5.25,-0.5) -- (5.25,2.25);

\node at (6,0.87) {$\mcO(2)$};

\end{tikzpicture}
\end{center}

To obtain~$(2)$, we furthermore mutate $\mathbb{S}^2 \mcU^{\vee} (-1)$ to give $\mcO (-2)$, using the first sequence from Lemma~\ref{lem.seqs} twisted by $\mcO (-1)$. Then mutating $\mcO (-2)$ to the right through the collection gives
$\mcO (-2) \otimes \omega_{\Gr}^\vee  \cong \mcO (3)$,
where $\omega_{\Gr}\cong  \mcO(-5)$ is the canonical bundle of $\Gr(3,5)$.
\end{proof}

\section{Geometry of the flop} \label{sect:geom of flop}
This section further reviews the geometry of the varieties $X_+$ and $X_-$ from~\S\ref{sect:AG4flop}.
\subsection{GLSM/GIT construction}\label{sec:git}
In this section, we will characterize the varieties $X_\pm$ as GIT quotients with respect to a $\GL(3)$-action on a suitably-defined vector space. The argument has already been established in \cite{ourpaper_cy3s} and generalized in \cite{ourpaper_generalizedroofs}. We recall the main aspects of the construction.\\
\\
Let us fix $S\simeq\C^3$, $V\simeq \C^5$. Then, we define the following vector space:
\begin{equation*}
    W \coloneqq \Hom(S, V)\oplus \Hom(S, (\det S)^{\otimes 2}).
\end{equation*}
The fundamental action of $\GL(S)$ on $S$ defines an action on $W$:
\begin{align*}
        \GL(S)\times W & \to W \\
        (g, (B, \omega)) & \mapsto (B g^{-1}, \omega g^{-1}\det g^{2}),
\end{align*}
where the linear form $\omega\in S^\vee$ is treated as a row vector. Write $\mcX \coloneqq [W/\GL(S)]$ for the associated  Artin stack. For $\tau\in\Z$ one has the associated character:
\begin{align*}
        \rho_\tau\colon \GL(S) & \to \C^* \\
        g & \mapsto \det g^{-\tau},
\end{align*}
and for any $k = (k_1, k_2, k_3)\in\Z^3$ we can define a one-parameter subgroup (1-PS):
\begin{align*}
        g_k\colon \C^* & \to \GL(S) \\
        t  & \mapsto \diag(t^{k_1}, t^{k_2}, t^{k_3}).
\end{align*}
The GIT unstable locus with respect to $\tau$ can be characterized as the locus of $(B, \omega)\in W$ such that there exists a 1-PS $g_k$ which satisfies the following conditions:
\begin{enumerate}
    \item $\lim_{t\to 0} \rho_\tau(g_k(t))^{-1} = 0$
    \item $\lim_{t\to 0} g_k(t).(B, \omega)$ exists.
\end{enumerate}
There are essentially two stability conditions for the action of $G$ on $W$,
and they will be referred to using symbols $+$ and $-$.
Let $W_+^{\mathrm{ss}}$ and $W_-^{\mathrm{ss}}$ be the semistable locus with respect to each stability.

\begin{lem}
    Let us consider the vector space $W = \Hom(S,V)\oplus \Hom(S, (\det S)^{\otimes 2})$, with $\GL(S)$ acting as above.
    There exist two GIT quotients of $W$ with respect to $\GL(S)$:
    \begin{equation}
        \begin{split}
            X_+ &=  W_+^{\mathrm{ss}} / \GL(S)\simeq \Tot(\mcU_3^\vee(-2)\arw \Gr(3, V)),\\
            X_- &=  W_-^{\mathrm{ss}} / \GL(S)\simeq \Tot(\mcQ_3(-2)\arw \Gr(2, V)).
        \end{split}
    \end{equation}
\end{lem}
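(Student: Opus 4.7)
The plan is to apply the Hilbert--Mumford numerical criterion to identify $W_\pm^{\mathrm{ss}}$ explicitly, then build $\GL(S)$-equivariant morphisms from each to the corresponding total space and verify that they descend to isomorphisms of the GIT quotients. Under a diagonal one-parameter subgroup $g_k(t)=\diag(t^{k_1},t^{k_2},t^{k_3})$ in a chosen basis of $S$, the column $Be_j$ is scaled by $t^{-k_j}$, the component $\omega e_j$ by $t^{2(k_1+k_2+k_3)-k_j}$, and the weight of $\rho_\tau$ along $g_k$ is $-\tau(k_1+k_2+k_3)$. A short case analysis on the rank of $B$ and the vanishing of $\omega$ then yields
\begin{align*}
W_+^{\mathrm{ss}} &= \{(B,\omega) : B\colon S\to V \text{ is injective}\}, \\
W_-^{\mathrm{ss}} &= \{(B,\omega) : \omega\neq 0 \text{ and } B|_{\ker\omega} \text{ is injective}\},
\end{align*}
where the chambers $\pm$ correspond to the two sign choices for $\tau$.

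For the positive chamber, the assignment $(B,\omega)\mapsto \Im(B)$ realizes the projection $W_+^{\mathrm{ss}}\to\Gr(3,V)$ as the frame bundle of $\mcU_3$, and $W_+^{\mathrm{ss}}$ itself as this frame bundle crossed with the $\GL(S)$-representation $\Hom(S,(\det S)^{\otimes 2})\simeq S^\vee\otimes(\det S)^{\otimes 2}$. The associated bundle construction then yields $\mcU_3^\vee\otimes(\det\mcU_3)^{\otimes 2}\simeq\mcU_3^\vee(-2)$ on $\Gr(3,V)$, using $\det\mcU_3\simeq\mcO(-1)$, giving the required isomorphism $X_+\simeq\Tot(\mcU_3^\vee(-2))$.

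For the negative chamber, the map $(B,\omega)\mapsto B(\ker\omega)$ defines a morphism $\pi\colon X_-\to\Gr(2,V)$. The $\GL(S)$-equivariant short exact sequence $0\to\ker\omega\to S\to(\det S)^{\otimes 2}\to 0$ on $W_-^{\mathrm{ss}}$ descends to a sequence $0\to\pi^*\mcU_2\to\mcS\to(\det\mcS)^{\otimes 2}\to 0$ on $X_-$, which forces $\det\mcS\simeq\pi^*(\det\mcU_2)^{-1}\simeq\pi^*\mcO(1)$ and so identifies the $\GL(S)$-character $(\det S)^{\otimes 2}$ with $\pi^*\mcO(2)$. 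The residual datum, namely the cokernel map $(\det\mcS)^{\otimes 2}\to\pi^*\mcQ_3$ induced by $B$ modulo $B(\ker\omega)$, is then equivalent to a section of $\pi^*\mcQ_3\otimes(\det\mcS)^{-2}\simeq\pi^*\mcQ_3(-2)$, producing the required map $X_-\to\Tot(\mcQ_3(-2))$.

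Bijectivity on closed points together with smoothness of source and target upgrades each constructed morphism to an isomorphism. The main subtlety lies in the third paragraph: the descent computation showing that the $\GL(S)$-character $(\det S)^{\otimes 2}$ produces the line bundle $\mcO(-2)$ on $\Gr(2,V)$, via the intermediate identification of $\det\mcS$ on $X_-$ from the descended exact sequence. The Hilbert--Mumford analysis itself is elementary, but requires care to enumerate all the relevant rank profiles of $B$ together with the vanishing behavior of $\omega$.
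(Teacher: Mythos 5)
Your proposal is correct, but it is worth noting that the paper itself does not prove this lemma: its ``proof'' is a one-line citation to Lemma~4.4 of the generalized-roofs paper, so you have supplied an argument where the authors deferred to a reference. Your identification of the semistable loci agrees with what the paper establishes independently in its Kempf--Ness analysis (\S\ref{sect:comparisonHL}): on the $+$ side the destabilizing one-parameter subgroups have nonnegative weights, so the $\omega$-weights $(1,2,2)$ etc.\ impose no condition and instability is exactly non-injectivity of $B$; on the $-$ side the strata are $\omega=0$ and $\ker\omega\cap\ker B\neq 0$, matching your description and the orbit presentation of $X_-$ used in the proof of Lemma~\ref{lem:def P}. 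Your descent computations ($\mcU_3^\vee\otimes(\det\mcU_3)^{\otimes 2}\simeq\mcU_3^\vee(-2)$ on the $+$ side, and $\det\mcS\simeq\pi^*\mcO(1)$ hence $(\det S)^{\otimes 2}\rightsquigarrow\pi^*\mcO(2)$ on the $-$ side) are also right.

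The one place where your argument is thinner than it should be is the claimed bijectivity on the $-$ side. The pair $(U_2,\beta)$, with $\beta\colon(\det S)^{\otimes 2}\to V/U_2$ the induced cokernel map, does \emph{not} determine $B$ once $\omega$ and $B|_{\ker\omega}$ are fixed: the component of $B$ on a complement of $\ker\omega$ landing inside $U_2$ (a $2$-dimensional ambiguity) is invisible to $\beta$. What saves you is that the unipotent radical of the stabilizer of $\omega$ --- the elements $e_1\mapsto e_1+u$ with $u\in\ker\omega$, which fix $\omega$ and $B|_{\ker\omega}$ --- translates $Be_1$ by exactly $B(\ker\omega)=U_2$, so this $2$-dimensional ambiguity is precisely absorbed by the residual group action. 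You should make this orbit count explicit before invoking ``bijectivity on closed points''; with it, the dimension count ($18-9=9=6+3$) closes and Zariski's main theorem gives the isomorphism as you say.
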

\begin{proof}
    See \cite[Lemma 4.4]{ourpaper_generalizedroofs} for $k = 2$.
\end{proof}
Note that both $X_-$ and $X_+$ embed in the Artin stack $\mcX = [W/\GL(S)]$. 
Let us denote the inclusions by $\iota_{\pm} \colon X_{\pm} \hookrightarrow \mcX$.

\subsection{The morphisms related to the flop} \label{sect:diagram flop}

With the definitions from \S\ref{sect:AG4flop},
by a little abuse of notations,
let the natural projections be as follows.
\[
\begin{tikzpicture}[auto,x=12mm,yscale=0.8]

\node (G+) at (0, 2) {$\Gr(3,5)$};
\node (G-) at (4, 2) {$\Gr(2,5)$};
\node (F) at (2, 2)  {$\Fl(2,3;5)$};
\node (X+) at (0,1) {$X_+$};
\node (X-) at (4,1) {$X_-$};
\node [transform canvas={yshift=+2.5pt}] (wX) at (2,1) {$\wX$};
\node (G0+) at (0,0) {$\G_+$};
\node (G0-) at (4,0) {$\G_-$};
\node [transform canvas={yshift=+1.5pt}] (E) at (2, 0) {$E$};

\draw[->] (wX) to node[swap] {$\scriptstyle p$} (X+);
\draw[->] (wX) to node {$\scriptstyle q$} (X-);

\draw[->] (F) to node[swap] {$\scriptstyle p$} (G+);
\draw[->] (F) to node {$\scriptstyle q$} (G-);

\draw[->] (E) to node[swap] {$\scriptstyle p$} (G0+);
\draw[->] (E) to node {$\scriptstyle q$} (G0-);

\end{tikzpicture}
\]
The following diagrams summarize all the morphisms we consider in relation to the flop.
Since there are too many morphisms to show in one diagram, 
we divide them into two.

\[
\begin{tikzpicture}[auto,x=12mm]

\node (X+) at (0,0) {$X_+$};
\node (X-) at (4,0) {$X_-$};
\node (wX) at (2,1.5) {$\wX$};
\node (G+) at (0, -1.5) {$\Gr(3,5)$};
\node (G-) at (4, -1.5) {$\Gr(2,5)$};
\node (F) at (2, 0)  {$\Fl(2,3;5)$};
\node (G0+) at (-1.5,0) {$\G_+$};
\node (G0-) at (5.5,0) {$\G_-$};
\node (E) at (2, 3) {$E$};

\draw[->] (X+) to node[swap] {$\scriptstyle \pi_+$} (G+);
\draw[->] (X-) to node {$\scriptstyle \pi_-$} (G-);
\draw[->] (wX) to node {$\scriptstyle \pi$} (F);

\draw[{Hooks[right]}->] (G0+) -- (X+);
\draw[{Hooks[left]}->] (G0-) -- (X-);
\draw[{Hooks[right]}->] (E) -- (wX);

\draw[->] (wX) to node[swap] {$\scriptstyle p$} (X+);
\draw[->] (wX) to node {$\scriptstyle q$} (X-);

\draw[->] (F) to node[swap] {$\scriptstyle p$} (G+);
\draw[->] (F) to node {$\scriptstyle q$} (G-);

\draw[->] (E) to node[swap] {$\scriptstyle p$} (G0+);
\draw[->] (E) to node {$\scriptstyle q$} (G0-);

\draw[double equal sign distance] (G0+) -- (G+);
\draw[double equal sign distance] (G0-) -- (G-);

\node (X+2) at (7,0) {$X_+$};
\node (X-2) at (9,0) {$X_-$};
\node (cX) at (8, 1.5) {$\mcX$};
\node (R) at (8, -1.5) {$\Spec R$};

\draw[{Hooks[right]}->] (X+2) -- node {$\scriptstyle \iota_+$} (cX);
\draw[{Hooks[left]}->] (X-2) -- node[swap] {$\scriptstyle \iota_-$} (cX);

\draw[->] (X+2) to node[swap] {$\scriptstyle f_+$} (R);
\draw[->] (X-2) to node {$\scriptstyle f_-$} (R);

 \draw[dashed, ->] (X+2) -- node {$\scriptstyle \text{flop}$} (X-2);

\end{tikzpicture}
\]

Let $U$ be the \textit{common open subset}  of $X_+$ and $X_-$ with open embeddings $j_{\pm} \colon U \hookrightarrow X_{\pm}$ such that
\begin{enumerate}
\item[(1)] $j_{\pm}(U) = X_{\pm} \setminus \G_{\pm}$ and
\item[(2)] $f_+ \circ j_+ = f_- \circ j_-$.
\end{enumerate} 
Since $\codim_{X_{\pm}} \G_{\pm} = 3$,
there are equivalences of categories
\begin{align} \label{eq:reflexive equiv} 
\refl(X) \xleftarrow[j_+^*]{\sim} \refl(U) \xrightarrow[j_-^*]{\sim} \refl(X_-),
\end{align}
which will be frequently used in later sections.

\section{Construction of tilting bundles}\label{sec:windows}

The aim of this section is to construct tilting bundles over $X_{\pm}$,
which will be used to prove Theorem~\ref{thm:main_theorem}.

\subsection{The vector bundles}

In this section we introduce several important vector bundles over $X_{\pm}$,
and define the bundles that we will prove to be tilting.

\begin{nota}
Define vector bundles over $X_{\pm}$ by
\begin{center}
$\mcO_{X_{\pm}}(a) \coloneqq \pi_{\pm}^*\mcO_{\Gr}(a)$,
$\scrU_3 \coloneqq \pi^*_+\mcU_3$, and
$\scrU_2 \coloneqq \pi^*_- \mcU_2$.
\end{center}
In addition, define line bundles over $\wX$ by
\[ \mcO_{\wX}(aH_3 + bH_2) \coloneqq p^*\mcO_{X_+}(a) \otimes q^*\mcO_{X_-}(b). \]
Note that $\mcO_{\wX}(E) \simeq \mcO(-H_3-H_2)$.
\end{nota}

Note that one has $\scrU_3^\vee = \iota_+^*(W \times^{\GL(S)} S^\vee)$.
Moreover, there is the corresponding rank three vector bundle on $X_-$.
\begin{defi}
Define a rank three vector bundle $\scrP$ over $X_-$ by
\[ \scrP \coloneqq \iota_-^*\left(W \times^{\GL(S)} S^\vee \right). \]
\end{defi}
This bundle $\scrP$ is not a pullback from $\Gr(2, 5)$. In fact, one has the following:

\begin{lem} \label{lem:def P}
    There is a short exact sequence on $X_-$:
    \begin{equation} \label{eq: def P}
        0\arw \mcO_{X_-}(-2) \arw \scrP \arw \scrU_2^\vee \arw 0.
    \end{equation}
  In addition, if $(U, j_{\pm})$ is the common open subset,
   we have $j^*_-\scrP \simeq j_+^*\scrU_3^{\vee}$. 
\end{lem}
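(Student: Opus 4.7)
The plan is to trace the definition of $\scrP$ back to the universal data on $W$ and descend along the GIT quotient to $X_-$.

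The key ingredient is the tautological map $\omega^{\mathrm{univ}} \colon \mcO_W \otimes S \to \mcO_W \otimes (\det S)^{\otimes 2}$ on $W$, which is $\GL(S)$-equivariant by construction of the action and hence descends to a morphism of vector bundles on the stack $\mcX$. First I would verify that on the semistable locus $W^{\mathrm{ss}}_-$ the stability condition forces $\omega \neq 0$ at every point, so that $\omega^{\mathrm{univ}}$ is surjective there. This produces a short exact sequence of equivariant bundles on $W^{\mathrm{ss}}_-$, which after descent gives a short exact sequence
$$0 \to \ker \omega \to \scrS \to \scrL \to 0$$
on $X_-$, where $\scrS$ and $\scrL$ denote the descents of $S \otimes \mcO_W$ and $(\det S)^{\otimes 2} \otimes \mcO_W$, respectively.

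Next I would identify each of the three terms. By definition $\scrS^\vee \simeq \iota_-^*(W \times^{\GL(S)} S^\vee) = \scrP$. For the kernel, recall that on $W^{\mathrm{ss}}_-$ the universal map $B^{\mathrm{univ}} \colon \mcO_W \otimes S \to \mcO_W \otimes V$ is injective on $\ker \omega^{\mathrm{univ}}$, with image a rank $2$ equivariant subbundle of $V \otimes \mcO_{W_-^{\mathrm{ss}}}$ that descends to the tautological subbundle $\pi_-^* \mcU_2 = \scrU_2 \subset V \otimes \mcO_{X_-}$; hence $\ker \omega \simeq \scrU_2$ on $X_-$. To identify $\scrL = (\det \scrS)^{\otimes 2}$, I would take determinants in the short exact sequence to obtain the relation $\det \scrS = \det \scrU_2 \otimes (\det \scrS)^{\otimes 2}$, so that $\det \scrS = (\det \scrU_2)^{-1} = \mcO_{X_-}(1)$ and therefore $\scrL \simeq \mcO_{X_-}(2)$. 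Dualizing the resulting sequence $0 \to \scrU_2 \to \scrS \to \mcO_{X_-}(2) \to 0$ then yields the desired
$$0 \to \mcO_{X_-}(-2) \to \scrP \to \scrU_2^\vee \to 0.$$

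For the second statement, both $\scrP$ and $\scrU_3^\vee$ are by construction the pullbacks $\iota_\pm^*(W \times^{\GL(S)} S^\vee)$ of a single vector bundle on the stack $\mcX$, and the two open immersions $\iota_\pm \circ j_\pm \colon U \hookrightarrow \mcX$ coincide under the identification $j_\pm \colon U \xrightarrow{\sim} X_\pm \setminus \G_\pm$. Hence $j_-^*\scrP$ and $j_+^*\scrU_3^\vee$ agree as pullbacks of the same stacky bundle. The main obstacle I anticipate is pinning down $\det \scrS$ unambiguously on $X_-$ — in particular verifying the Plücker sign convention so that $(\det \scrU_2)^{-1}$ is indeed $\mcO_{X_-}(+1)$ — together with confirming that the equivariant descent of $\ker \omega^{\mathrm{univ}}$ is precisely $\scrU_2$ and not a twist of it by a character of $\GL(S)$.
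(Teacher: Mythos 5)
Your proof is correct, and it reaches the same underlying fact as the paper — the two-step filtration of $S$ (equivalently $S^\vee$) induced by the covector $\omega$ — but packages it differently. The paper normalizes $\omega$ to $(1,0,0)$, passes to the stabilizer subgroup $\Gamma$ of block upper-triangular matrices, and reads off the equivariant injection $\mcO_{X_-}(-2)\hookrightarrow\scrP$ and surjection $\scrP\twoheadrightarrow\scrU_2^\vee$ from the $\Gamma$-representation structure of $S^\vee$, deferring details to \cite{ourpaper_generalizedroofs}. You instead work globally on $W_-^{\mathrm{ss}}$ with the tautological equivariant surjection $\omega^{\mathrm{univ}}\colon \mcO_W\otimes S\to\mcO_W\otimes(\det S)^{\otimes 2}$ (surjective precisely because semistability forces $\omega\neq 0$), identify its kernel with $\scrU_2$ via the embedding $B|_{\ker\omega}\hookrightarrow V\otimes\mcO$ — which, being a subbundle of the trivial bundle with trivial $\GL(S)$-action on $V$, rules out the character twist you worried about — and then dualize. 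Your determinant computation pinning the quotient down as $(\det\scrS)^{\otimes 2}\simeq\mcO_{X_-}(2)$ is a nice self-contained replacement for the weight bookkeeping that the paper leaves implicit, and your sequence $0\to\scrU_2\to\scrP^\vee\to\mcO(2)\to 0$ is exactly the one the paper rederives later by pushforward in Lemma~\ref{lem1}, which is a useful consistency check. The identification of $j_-^*\scrP$ with $j_+^*\scrU_3^\vee$ as two pullbacks of the single stacky bundle $W\times^{\GL(S)}S^\vee$ along the coinciding open immersions $\iota_\pm\circ j_\pm$ is precisely what the paper means by ``clear from the construction.'' The only ingredient you are implicitly importing is that the classifying map $W_-^{\mathrm{ss}}/\GL(S)\to\Gr(2,V)$ is $(B,\omega)\mapsto B(\ker\omega)$; this is how the isomorphism of the GIT quotient with $\Tot(\mcQ_3(-2))$ is set up in \cite{ourpaper_generalizedroofs}, so it is legitimate to use, but it deserves an explicit citation.
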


\begin{proof}
    The result is very similar to \cite[Equation 4.50]{ourpaper_generalizedroofs} and proven in exactly the same way, hence we will be brief. First, one observes that:
    \begin{equation*}
        X_- \simeq \{(B, \omega)\in W : \ker(B)\cap\ker(\omega) = 0, \omega\neq 0\}/ \GL(S).
    \end{equation*}
    This description can be simplified by observing that every point $(B, \omega)$ with $\ker(B) \cap \ker(\omega) = 0$ and $\omega\neq 0$ lies in the same orbit of $(B', (1,0,0))$ for some $B'$. Hence, we have
    \begin{equation*}
        X_- \simeq \{B\in \Hom(S, V) : \ker(B)\cap\ker((1,0,0)) = 0\}/ \Gamma
    \end{equation*}
    where $\Gamma$ is the stabilizer of the subspace $\{(B, (1,0,0))\}\subset W$, and it is explicitly given by:
    \begin{equation*}
        \Gamma = \left\{
            \left(
                \begin{array}{cc}
                    \lambda & x \\
                    0 & h
                \end{array}
            \right) : \lambda\in\C^*, h\in \GL(2), x = (x_1, x_2)\in\C^2
        \right\}.
    \end{equation*}
    Then, it is clear that we have an equivariant injection from $\mcO_{X_-}(-2)$ and an equivariant surjection to $\scrU_2^\vee$. The details are essentially the same as in \cite[page 2334]{ourpaper_generalizedroofs}.
    The stated isomorphism on $U$ is now clear from the construction.
\end{proof}

\begin{rem}
Throughout this paper we write $\Sym^2\scrP(1)$ for $(\Sym^2\scrP) \otimes \mcO(1)$, for example.
The reason for adopting this bracket-omitting notation, despite its potential ambiguity, 
is to avoid the excessive use of parentheses in the forthcoming formulas, 
which would otherwise obscure the intended structure of the expressions.
\end{rem}

\begin{defi} \label{def:tilting bundles}
Define four vector bundles over $X_+ = \Tot_{\Gr(3,5)}(\mcU_3^{\vee}(-2))$ as follows:
\begin{align*}
\scrT_+^{\spadesuit} &\coloneqq \mcO(-1) \oplus \mcO \oplus \mcO(1) \oplus \mcO(2) \oplus \mcO(3) \oplus \scrU_3^\vee(-1) \oplus \scrU_3^\vee \oplus \scrU_3^\vee(1) \oplus \scrU_3(1) \oplus \scrU_3(2) ,\\
\scrT_+^{\heartsuit} &\coloneqq \mcO(-1) \oplus \mcO \oplus \mcO(1) \oplus \mcO(2) \oplus \scrU_3^\vee(-1) \oplus \scrU_3^\vee \oplus \scrU_3^\vee(1) \oplus \scrU_3(1) \oplus \scrU_3(2) \oplus \Sym^2\scrU_3^\vee(-1), \\
\scrT_+^{\clubsuit} &\coloneqq (\scrT_+^{\spadesuit})^{\vee}, ~\textit{and} \\
\scrT_+^{\diamondsuit} &\coloneqq (\scrT_+^{\heartsuit})^{\vee}.
\end{align*}
Similarly, define four vector bundles over $X_- = \Tot_{\Gr(2,5)}(\mcQ_3(-2))$ as follows:
\begin{align*}
\scrT_-^{\spadesuit} &\coloneqq \mcO(1) \oplus \mcO \oplus \mcO(-1) \oplus \mcO(-2) \oplus \mcO(-3) \oplus \scrP(1) \oplus \scrP \oplus \scrP(-1) \oplus \scrP^{\vee}(-1) \oplus \scrP^{\vee}(-2) ,\\
\scrT_-^{\heartsuit} &\coloneqq \mcO(1) \oplus \mcO \oplus \mcO(-1) \oplus \mcO(-2) \oplus \scrP(1) \oplus \scrP \oplus \scrP(-1) \oplus \scrP^{\vee}(-1) \oplus \scrP^{\vee}(-2) \oplus \Sym^2\scrP(1), \\
\scrT_-^{\clubsuit} &\coloneqq (\scrT_-^{\spadesuit})^{\vee}, ~\textit{and} \\
\scrT_-^{\diamondsuit} &\coloneqq (\scrT_-^{\heartsuit})^{\vee}.
\end{align*}
\end{defi}

The vector bundles above with the same suit are related with each other as in the following lemma.

\begin{cor} \label{cor:same End}
For each $\star \in \{\spadesuit, \clubsuit, \heartsuit, \diamondsuit \}$,
and the common open subset $(U,j_{\pm})$,
there is an isomorphism
\[ j_+^* \scrT_+^{\star} \simeq j_-^*\scrT_-^{\star}. \]
This isomorphism induces an isomorphism of $R$-algebras
\[ \End_{X_+}(\scrT_+^{\star}) \simeq \End_{X_-}(\scrT_-^{\star}). \]
\end{cor}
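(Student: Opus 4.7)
The plan is to first establish the summand-by-summand isomorphism $j_+^{*}\scrT_+^{\star} \simeq j_-^{*}\scrT_-^{\star}$ on $U$, and then to promote it to an $R$-algebra isomorphism of global endomorphism algebras using the codimension-three equivalence of reflexive sheaves recalled in~(\ref{eq:reflexive equiv}).

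For the first step, by inspection of Definition~\ref{def:tilting bundles}, the term-by-term matching of $\scrT_+^{\star}$ with $\scrT_-^{\star}$ for $\star \in \{\spadesuit,\heartsuit\}$ reduces to three basic identifications on $U$: (a) $j_+^{*}\mcO_{X_+}(a) \simeq j_-^{*}\mcO_{X_-}(-a)$ for every $a \in \Z$; (b) $j_+^{*}\scrU_3^\vee \simeq j_-^{*}\scrP$; and (c) $j_+^{*}\scrU_3 \simeq j_-^{*}\scrP^\vee$. Statement~(b) is the last claim of Lemma~\ref{lem:def P}, and (c) follows from (b) by dualizing. Combining (a)--(c), together with taking tensor and symmetric powers where needed (the $\Sym^2$ summand in the $\heartsuit$ case), one obtains the required isomorphism for $\star \in \{\spadesuit,\heartsuit\}$; the remaining cases $\star \in \{\clubsuit,\diamondsuit\}$ then follow by dualizing.

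To establish (a), I would use the relation $\mcO_{\wX}(E) \simeq \mcO_{\wX}(-H_3-H_2)$ noted in~\S\ref{sec:windows}: restricted to $\wX \setminus E$, the line bundle $\mcO_{\wX}(E)$ is trivial, so $\mcO_{\wX \setminus E}(H_3) \simeq \mcO_{\wX \setminus E}(-H_2)$. Since $p$ and $q$ both restrict to isomorphisms $\wX \setminus E \xrightarrow{\sim} X_\pm \setminus \G_\pm$, and these open subsets are identified with $U$ via the maps $j_\pm$ (the two identifications agreeing by the very definition of the common open subset), pulling back along the induced isomorphisms gives (a) for $a=1$, whence for all $a$ by taking powers.

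For the $R$-algebra claim, each $\scrT_\pm^\star$ is locally free, hence so is its sheaf of endomorphisms, which is in particular reflexive. By the equivalences in~(\ref{eq:reflexive equiv}), the reflexive endomorphism sheaf on $X_\pm$ is determined by its restriction to $U$, so
\[ \End_{X_+}(\scrT_+^\star) \;\simeq\; \End_U(j_+^{*}\scrT_+^\star) \;\simeq\; \End_U(j_-^{*}\scrT_-^\star) \;\simeq\; \End_{X_-}(\scrT_-^\star), \]
$R$-linearly, where the middle isomorphism is induced by the first step. I expect the only mildly nontrivial point to be~(a); everything else is formal summand-matching together with a dualization.
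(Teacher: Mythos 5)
Your proof is correct and follows essentially the same route as the paper: Lemma~\ref{lem:def P} for the summand-by-summand identification over $U$, and the codimension-three reflexivity equivalence~(\ref{eq:reflexive equiv}) for the $R$-algebra isomorphism. The only variation is your derivation of $j_+^*\mcO_{X_+}(a)\simeq j_-^*\mcO_{X_-}(-a)$ via $\mcO_{\wX}(E)\simeq\mcO(-H_3-H_2)$, which also follows more directly by taking determinants in the isomorphism $j_-^*\scrP\simeq j_+^*\scrU_3^{\vee}$ of Lemma~\ref{lem:def P}, since $\det\scrP\simeq\mcO_{X_-}(-1)$ by the sequence~(\ref{eq: def P}).
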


\begin{proof}
The isomorphism of vector bundles can be deduced immediately from Lemma~\ref{lem:def P}.
Then the algebra isomorphism also follows from the equivalence (\ref{eq:reflexive equiv}).
\end{proof}

The rest of this section will be devoted to proving that all bundles in Definition~\ref{def:tilting bundles} are actually tilting.

\subsection{Tilting bundles over $X_+$} \label{sect:plus-side-tilting}
Let us begin with an elementary but useful fact on irreducible homogeneous bundles, which is well-known to experts:

\begin{lem}\label{lem:globally_generated}
    Let $\scrE$ be a globally generated, homogeneous, irreducible vector bundle on a rational homogeneous $Y$, fix $X \coloneqq \operatorname{Tot}(\scrE^\vee)$ and call $\pi \colon X \to Y$ the structure map. Then, for every homogeneous, irreducible vector bundle $\scrF$ on $Y$ which has a nontrivial global section, and for any $i>0$, one has $H^i(Y, \scrF) = H^i(X, \pi^*\scrF) = 0$.
\end{lem}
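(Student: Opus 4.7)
The plan is to prove both vanishings by reducing everything to the cohomology of irreducible homogeneous bundles with a nonzero global section, and then invoking Borel--Weil--Bott.

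First I would record the following elementary observation: if $\mcG$ is any irreducible homogeneous vector bundle on $Y = G/P$ admitting a nonzero global section $s$, then $\mcG$ is globally generated. Indeed, the evaluation map $H^0(Y, \mcG)\otimes \mcO_Y \to \mcG$ is $G$-equivariant, so its image is a homogeneous subsheaf of $\mcG$; since $s \ne 0$ this image is nonzero, and since $\mcG$ is irreducible the image must be all of $\mcG$. On $G/P$, Borel--Weil--Bott then forces the cohomology of $\mcG$ to be concentrated in degree zero. Applying this to $\mcF$ itself already proves $H^i(Y, \mcF) = 0$ for $i > 0$.

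Next I would compute $H^i(X, \pi^*\mcF)$ by pushing down to $Y$. Since $X = \Tot(\mcE^\vee) = \Spec_Y \Sym^\bullet \mcE$, the morphism $\pi$ is affine, so $R\pi_*\mcO_X = \Sym^\bullet \mcE$ and the projection formula gives $R\pi_*\pi^*\mcF \simeq \mcF \otimes \Sym^\bullet \mcE$. Hence
\[
H^i(X,\pi^*\mcF)\;\simeq\; \bigoplus_{k\geq 0} H^i(Y, \mcF\otimes \Sym^k\mcE),
\]
and it suffices to show that $H^i(Y, \mcF \otimes \Sym^k\mcE) = 0$ for all $i > 0$ and all $k \geq 0$.

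To conclude, I would decompose $\mcF \otimes \Sym^k\mcE$ into a direct sum of irreducible homogeneous bundles. Such a decomposition exists because we work in characteristic zero, where the Levi quotient of $P$ is reductive and its finite-dimensional representations are completely reducible, making the category of homogeneous bundles semisimple. Since $\mcE$ is globally generated, so are $\Sym^k\mcE$ and, combined with the first paragraph applied to $\mcF$, the tensor product $\mcF\otimes\Sym^k\mcE$. Every direct summand of a globally generated bundle is globally generated, so each irreducible summand admits a nonzero section; applying the opening observation to these summands yields the required vanishing.

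The main potential obstacle is purely bookkeeping: ensuring that global generation genuinely passes through the tensor/symmetric decomposition into irreducible homogeneous summands, and that the Borel--Weil--Bott criterion is correctly invoked in the form ``irreducible homogeneous plus nonzero section implies the weight lies in the dominant chamber, hence no higher cohomology.'' Neither step is delicate, but they tacitly rely on the characteristic-zero semisimplicity of Levi representations, which is worth flagging explicitly.
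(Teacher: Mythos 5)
Your proposal is correct and follows essentially the same route as the paper: a nonzero section of an irreducible homogeneous bundle forces global generation, Borel--Weil--Bott concentrates its cohomology in a single degree (hence in degree zero), the affine pushforward identifies $H^i(X,\pi^*\mcF)$ with $\bigoplus_{l\geq 0}H^i(Y,\mcF\otimes\Sym^l\mcE)$, and each irreducible summand of these tensor products is globally generated. One small caveat on wording: the full category of homogeneous bundles on $G/P$ is \emph{not} semisimple (since $P$ is not reductive); what you actually need, and what holds here, is that $\mcF\otimes\Sym^l\mcE$ is induced from a representation of the reductive Levi quotient and is therefore completely reducible.
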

\begin{proof}
    If an irreducible homogeneous bundle $F$ has a nonzero global section, then it is globally generated, and hence it does not have higher cohomology because, by the Borel--Weil--Bott theorem, homogeneous irreducible vector bundles cannot have nontrivial cohomology in two different degrees. Since the tensor product of globally generated vector bundles is globally generated, the same holds for every irreducible summand of $\scrF\otimes\bigoplus_{l\geq 0}\Sym^l \scrE$: therefore $\scrF\otimes\bigoplus_{l\geq 0}\Sym^l \scrE$ has no higher cohomology. The proof is concluded by observing that the latter is no other than $\pi_*(\pi^*\scrF\otimes\mcO_X)$.
\end{proof}

\begin{thm}\label{lem:tilting_plus_side}
The bundle $\scrT_+^{\star}$ 
is a tilting bundle over $X_+$ for all $\star \in \{\spadesuit, \clubsuit, \heartsuit, \diamondsuit \}$.
\end{thm}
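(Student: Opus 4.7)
The plan is to reduce the tilting property of $\scrT_+^\star$ to cohomological vanishings on $\Gr(3,5)$ via the affine projection $\pi_+ \colon X_+ \to \Gr(3,5)$, exploiting the exceptional collections of Proposition~\ref{prop our FEC}. First, using $\det \mcU_3 \simeq \mcO(-1)$ and hence $\wedge^2 \mcU_3^\vee \simeq \mcU_3(1)$, one observes that the summands of $\scrT_+^\spadesuit$ (respectively, $\scrT_+^\heartsuit$) are precisely the $\pi_+^*$-pullbacks of the bundles appearing in collection~(1) (respectively, (2)) of Proposition~\ref{prop our FEC}. By Lemma~\ref{lem:dual tilting}, once tilting is established for $\scrT_+^\spadesuit$ and $\scrT_+^\heartsuit$, it follows automatically for the duals $\scrT_+^\clubsuit$ and $\scrT_+^\diamondsuit$, so I would focus on the first two cases.

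For generation, I would use that $\pi_+$ is affine, being the relative spectrum of $\Sym^\bullet(\mcU_3(2))$. Thus $\pi_{+*}$ is exact and faithful on quasi-coherent sheaves, so if $F \in \D(\Qcoh X_+)$ satisfies $\RHom_{X_+}(\pi_+^*\scrE, F) = 0$ for every summand $\scrE$ of the corresponding collection, adjunction gives $\RHom_{\Gr(3,5)}(\scrE, \pi_{+*}F) = 0$ for all such $\scrE$. Since the collection is \emph{full}, this forces $\pi_{+*}F = 0$ and hence $F = 0$.

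For pretilting, the projection formula together with $\pi_{+*}\mcO_{X_+} \simeq \bigoplus_{l\geq 0}\Sym^l \mcU_3 \otimes \mcO(2l)$ gives
\begin{equation*}
\Ext^i_{X_+}(\pi_+^*\scrE, \pi_+^*\scrF) \simeq \bigoplus_{l \geq 0} H^i\bigl(\Gr(3,5),\, \scrE^\vee \otimes \scrF \otimes \Sym^l \mcU_3 \otimes \mcO(2l)\bigr)
\end{equation*}
for any pair of summands $\scrE, \scrF$. The $l=0$ contribution vanishes for $i>0$ because the collection is already \emph{strong}. For $l \geq 1$, one decomposes the tensor product into irreducible homogeneous bundles $\mathbb{S}^\lambda \mcU_3^\vee$ (possibly up to a twist by $\mcO(a)$) via the Pieri and Littlewood--Richardson rules; the ample twist $\mcO(2l)$ should ensure that each irreducible summand has a nonzero global section, so Lemma~\ref{lem:globally_generated} yields the required vanishing.

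The main obstacle will be the systematic verification of the global-generation condition across every pair of summands and every $l \geq 1$, particularly for $\scrT_+^\heartsuit$, where the negative twist appearing in $\Sym^2 \scrU_3^\vee(-1)$ must already be compensated by $\mcO(2)$ at $l=1$; certain ``short'' tensor products are likely the tightest constraints. I expect this case-by-case enumeration to be handled by explicit Borel--Weil--Bott computations, presumably the role of the scripts collected in Appendix~\ref{app:code}, rather than by a uniform closed-form argument.
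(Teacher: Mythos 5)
Your proposal follows essentially the same route as the paper: reduce to $\spadesuit,\heartsuit$ via Lemma~\ref{lem:dual tilting}, identify the summands with pullbacks of the collections in Proposition~\ref{prop our FEC}, obtain generation from fullness via the affine projection (the paper cites a lemma of \cite{hara_G2} for this), and prove pretilting by the same projection-formula decomposition, with the low-$l$ terms checked by Littlewood--Richardson and Borel--Weil--Bott. The only refinement the paper adds is pinning down the explicit bound $l_0=4$ beyond which global generation (and hence Lemma~\ref{lem:globally_generated}) applies uniformly, exactly the tight range you flag as requiring case-by-case computation.
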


\begin{proof}
By Lemma~\ref{lem:dual tilting},
we can restrict our attention to the case $\star \in \{\spadesuit, \heartsuit\}$.

Let $\mcT_+^{\spadesuit}$ (resp.~$\mcT_+^{\heartsuit}$) be a vector bundle over $\Gr(3,5)$ that is the direct sum of the collection in Proposition~\ref{prop our FEC}~(1) (resp.~(2)).
Then notice that $\scrT_+^{\star} \simeq \pi_+^* \mcT_+^{\star}$ for each $\star \in \{\spadesuit, \heartsuit\}$.
Thus \cite[Lemma~3.1]{hara_G2} implies that the bundle $\scrT_+^{\star}$ classically generates $\mcD(X_+)$.
We are left to prove that $\Ext^p_{X_+}(\pi_+^*\scrT_+^{\star},\pi_+^*\scrT_+^{\star}) = 0$ for $p>0$. First, observe that:
    \begin{equation}\label{eq:vector_bundle_expansion}
        \begin{split}
            \Ext^p_{X_+}(\pi_+^*\mcT_+^{\star},\pi_+^*\mcT_+^{\ast}) & =  \Ext^p_{\Gr(3,5)}(\mcT_+^{\star},\pi_{+*}\pi_+^*\mcT_+^{\ast}) \\
            & =  \Ext^p_{\Gr(3,5)}\left(\mcT_+^{\star}, \mcT_+^{\star} \otimes \bigoplus_{l\geq 0} \Sym^l \mcU_3(2l) \right) \\
            & = \bigoplus_{l\geq 0} H^p(\Gr(3,5), \mcT_+^{\star\vee} \otimes \mcT_+^{\star}\otimes\Sym^l \mcU_3(2l))
        \end{split}
    \end{equation}
    Once we expand $\mcT_+^{\star}$, the task reduces to computing cohomology of homogeneous irreducible vector bundles on a Grassmannian.
    Note that, there is $l_0$ such that, in Equation \ref{eq:vector_bundle_expansion}, for every $l\geq l_0$ all summands appearing in the last line are globally generated, and thus have no higher cohomology. In fact, every such summand has the form
    \begin{equation*}
        \bS^{(a,b,0)}\mcU_3^\vee\otimes\mcO(-k)\otimes\Sym^l\mcU_3(2l) \simeq \bS^{(a,b,0)}\mcU_3^\vee\otimes\bS^{l,l}\mcU_3^\vee(l-k),
    \end{equation*}
    where $0\leq k\leq 4$. Hence, we have $l_0 = 4$.
    The computation for $l\leq l_0$ can be handled by means of the Littlewood--Richardson rule (c.f.~\cite{FultonHarrisBook}) and the Borel--Weil--Bott theorem (c.f.~\cite{weyman}). 
One can perform those computations by hand, or with a computational tool.
See \S\ref{code:lem:tilting_plus_side} for an example of a script that computes the relevant cohomologies.
\end{proof}

\subsection{Cohomology computations over $X_-$} \label{sect:minus-side-tilting}

The aim of the present section is to compute various extension groups of vector bundles over $X_-$,
which will be used to show the pretilting property of the bundle $\scrT_-^{\star}$ for $\star \in \{\spadesuit, \clubsuit, \heartsuit, \diamondsuit \}$.

\subsubsection{Easy Vanishings} \label{sect: easy ext}
In this section, we will extensively apply the Borel--Weil--Bott theorem to compute cohomology of vector bundles on $X_-$.
\begin{prop}\label{prop:easy_vanishings_line_bundles}
$H^i(X_-,\mcO(k)) = 0$ for all $k \geq -4$ and $i > 0$.
\end{prop}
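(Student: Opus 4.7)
The plan is to push down to $\Gr(2,5)$ via $\pi_-\colon X_-\to \Gr(2,5)$ and then invoke Borel--Weil--Bott on the base. Since $X_- = \Tot_{\Gr(2,5)}(\mcQ_3(-2)) = \Spec_{\Gr(2,5)} \Sym^{\bullet}\mcQ_3^{\vee}(2)$, we have
\[
\pi_{-*}\mcO_{X_-} \;\simeq\; \bigoplus_{l\geq 0}\Sym^l\mcQ_3^\vee \otimes \mcO(2l),
\]
so from $\mcO_{X_-}(k)\simeq \pi_-^*\mcO_{\Gr}(k)$ together with the projection formula,
\[
H^i(X_-, \mcO(k)) \;\simeq\; \bigoplus_{l\geq 0} H^i\!\bigl(\Gr(2,5),\, \Sym^l\mcQ_3^\vee\otimes \mcO(m)\bigr),\qquad m := k + 2l.
\]
The hypothesis $k \geq -4$ translates into $m \geq 2l - 4$, so it suffices to prove that for every $l\geq 0$ and every $m\geq 2l-4$ the summand on the right has no positive-degree cohomology.

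Each summand is the irreducible homogeneous bundle on $\Gr(2,5)$ associated, in our conventions (with $\mcO(1)=\det\mcU_2^{\vee}$), to the $\GL(5)$-weight $(m,m,l,0,0)$. Adding $\rho = (4,3,2,1,0)$ produces the tuple $(m+4,\, m+3,\, l+2,\, 1,\, 0)$. By Borel--Weil--Bott, the cohomology of this bundle vanishes identically unless the five entries are pairwise distinct, and otherwise is concentrated in a single degree equal to the length of the Weyl element putting the tuple in strictly decreasing order. Hence the task reduces to the purely combinatorial claim that, for $l \geq 0$ and $m \geq 2l-4$, the tuple $(m+4, m+3, l+2, 1, 0)$ is either already strictly decreasing (so only $H^0$ survives) or has a repeated entry (so the cohomology vanishes).

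This claim admits a short case analysis. When $l \leq m$, the inequalities $m+4 > m+3 \geq l+3 > l+2 \geq 2 > 1 > 0$ show the tuple is strictly decreasing. When $l > m$, the boundary cases $l = m+1$ and $l = m+2$ immediately yield the repeats $l+2 = m+3$ and $l+2 = m+4$ respectively. In the remaining range $l \geq m+3$, combining with $m \geq 2l-4$ gives $m + 3 \leq l \leq (m+4)/2$, which forces $m \leq -2$ and thus leaves only the pairs $(l,m) \in \{(0,-4),(0,-3),(1,-2)\}$; in each of these one checks directly that $m+3$ or $m+4$ hits one of the fixed entries $0$ or $1$, producing a repeat. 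No substantive obstacle is expected; the only mild subtlety is organizing the exceptional pairs in the regime $l > m$ so as to persuade the reader without resorting to a computer-algebra script.
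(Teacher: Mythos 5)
Your proposal is correct and follows essentially the same route as the paper: push $\mcO(k)$ down to $\Gr(2,5)$, decompose $\pi_{-*}\mcO_{X_-}$ into the irreducible homogeneous summands $\Sym^l\mcQ_3^\vee\otimes\mcO(k+2l)$, and apply Borel--Weil--Bott. The only difference is presentational: you treat all $k\ge -4$ uniformly by an explicit hand analysis of the shifted weight $(m+4,m+3,l+2,1,0)$ (correctly isolating the boundary pairs $(l,m)$), whereas the paper splits into $k\ge 0$, handled by Lemma~\ref{lem:globally_generated}, and $-4\le k\le -1$, delegated to a computer script.
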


\begin{proof}
For $k\geq 0$ the bundle $\mcO(k)$ on $\Gr(2, 5)$ has nontrivial global sections, thus the claim follows from Lemma \ref{lem:globally_generated}. For $-4\leq k\leq -1$, one can easily prove that $\mcO_{X_-}(k)$ has no higher cohomologies by means of the Borel--Weil--Bott theorem (see \S\ref{code:prop:easy_vanishings_line_bundles} for an example of a script that computes this cohomology).
\end{proof}

\begin{prop} \label{P}
$H^i(X_-,\scrP(k)) = 0$ for all $k \geq -2$ and $i > 0$.
\end{prop}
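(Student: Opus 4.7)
The plan is to reduce the claim to cohomology computations on $\Gr(2,5)$ using the defining short exact sequence of $\scrP$ from Lemma~\ref{lem:def P} together with the pushforward along $\pi_-\colon X_-\to\Gr(2,5)$.

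First, I would twist the sequence~(\ref{eq: def P}) by $\mcO(k)$ to obtain
\[ 0 \to \mcO_{X_-}(k-2) \to \scrP(k) \to \scrU_2^{\vee}(k) \to 0, \]
and pass to the long exact sequence in cohomology. For $k\geq -2$ we have $k-2\geq -4$, and hence Proposition~\ref{prop:easy_vanishings_line_bundles} yields $H^i(X_-,\mcO(k-2))=0$ for all $i>0$. It therefore suffices to establish $H^i(X_-,\scrU_2^{\vee}(k))=0$ for $i>0$ and $k\geq -2$.

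Next, I would apply the projection formula. Since $\pi_{-*}\mcO_{X_-} \simeq \bigoplus_{l\geq 0}\Sym^l \mcQ_3^{\vee}(2l)$, the claim reduces to showing
\[ H^i\bigl(\Gr(2,5),\; \mcU_2^{\vee}\otimes\Sym^l\mcQ_3^{\vee}\otimes\mcO(k+2l)\bigr) = 0 \quad\text{for all } i>0,\; l\geq 0,\; k\geq -2. \]
Whenever $l\geq 1$, so that the twist $k+2l\geq 0$, or whenever $l=0$ and $k\geq 0$, the bundle in question is a tensor product of globally generated homogeneous bundles on $\Gr(2,5)$, hence itself globally generated. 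Applying Lemma~\ref{lem:globally_generated} to each irreducible summand then gives the desired vanishing.

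The remaining cases are $l=0$ with $k\in\{-1,-2\}$, namely the two bundles $\mcU_2^{\vee}(-1)$ and $\mcU_2^{\vee}(-2)$, for which Lemma~\ref{lem:globally_generated} no longer applies since these bundles possess no nonzero global sections. I would handle these directly using the Borel--Weil--Bott theorem: in both cases the $\rho$-shift of the associated weight has a repeated entry, so both bundles are acyclic on $\Gr(2,5)$. The main subtlety of the proof lies precisely in not overlooking these finitely many boundary weights where the globally-generated reduction breaks down; a short script in the spirit of \S\ref{code:prop:easy_vanishings_line_bundles} could verify them mechanically.
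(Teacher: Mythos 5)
Your strategy coincides with the paper's: twist the sequence~(\ref{eq: def P}), kill the sub $\mcO_{X_-}(k-2)$ using Proposition~\ref{prop:easy_vanishings_line_bundles}, and handle the quotient $\scrU_2^{\vee}(k)$ by pushing forward to $\Gr(2,5)$ and applying global generation together with Borel--Weil--Bott. The paper is simply terser at the last step, delegating it to a script.

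There is, however, one false step. You assert that for every $l\geq 1$ the summand $\mcU_2^{\vee}\otimes\Sym^l\mcQ_3^{\vee}\otimes\mcO(k+2l)$ is a tensor product of globally generated bundles because $k+2l\geq 0$. But $\mcQ_3^{\vee}$ is not globally generated, so the twist must be attached to one of the factors: either as $\mcU_2^{\vee}(k)\otimes\Sym^l\bigl(\mcQ_3^{\vee}(2)\bigr)$ or as $\mcU_2^{\vee}\otimes\Sym^l\bigl(\mcQ_3^{\vee}(2)\bigr)\otimes\mcO(k)$, and in either factorization the factor carrying $k$ fails to be globally generated when $k<0$. Concretely, at $(k,l)=(-2,1)$ the summand is $\mcU_2^{\vee}\otimes\mcQ_3^{\vee}$, which has no nonzero global sections at all: one checks $\Hom(\mcQ_3,\mcU_2^{\vee})=0$, or notes that the $\rho$-shift of the weight $(1,0,1,0,0)$ is $(5,3,3,1,0)$, which has a repeated entry, so the bundle is acyclic. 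Since a nonzero globally generated bundle must have sections, your justification fails for this summand. The conclusion survives --- the summand is acyclic and contributes nothing --- but it must be added to the list of boundary cases that you check directly by Borel--Weil--Bott. All remaining summands with $l\geq 1$ and $k\in\{-1,-2\}$ are irreducible homogeneous bundles with nonzero $H^0$ (their shifted weights are strictly decreasing), so Lemma~\ref{lem:globally_generated} does apply to them, and with this one extra case your argument closes.
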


\begin{proof}
Consider the exact sequence 
$0 \to \mcO(-2+k) \to \scrP(k) \to \scrU_2^{\vee}(k) \to 0$. 
Then the first term has no higher cohomology by Proposition \ref{prop:easy_vanishings_line_bundles},
and likewise for the last one by the Littlewood-Richardson rule and the Borel--Weil--Bott theorem 
(see \S\ref{code:P} for an example of a script that computes this cohomology).
\end{proof}

\begin{prop} \label{sym2P}
$H^i(X_-,\Sym^2\scrP(k)) = 0$ for all $k \geq 0$ and $i > 0$.
\end{prop}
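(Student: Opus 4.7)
The strategy is to exploit the short exact sequence
\[ 0 \to \mcO_{X_-}(-2) \to \scrP \to \scrU_2^\vee \to 0 \]
from Lemma \ref{lem:def P} by applying $\Sym^2$ to obtain a two-step filtration of $\Sym^2\scrP$ whose associated graded pieces are $\mcO(-4)$, $\scrU_2^\vee(-2)$, and $\Sym^2\scrU_2^\vee$. Twisting by $\mcO(k)$ yields a filtration of $\Sym^2\scrP(k)$ with graded pieces
\[ \mcO(k-4), \qquad \scrU_2^\vee(k-2), \qquad \Sym^2\scrU_2^\vee(k). \]
It then suffices, via the long exact sequences in cohomology associated to the two short exact sequences of the filtration, to show that each of these three bundles has vanishing higher cohomology on $X_-$ for $k\geq 0$.

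The first piece is handled by Proposition \ref{prop:easy_vanishings_line_bundles}, since $k\geq 0$ implies $k-4\geq -4$. For the second piece we note that $k-2\geq -2$, and recall that the proof of Proposition \ref{P} already establishes the vanishing $H^i(X_-,\scrU_2^\vee(j)) = 0$ for $j\geq -2$ and $i>0$ via the Borel--Weil--Bott theorem combined with the Littlewood--Richardson rule applied to the decomposition of $\scrU_2^\vee(j)\otimes \Sym^l\mcQ_3^\vee(2l)$ into irreducibles on $\Gr(2,5)$. For the third piece, observe that $\Sym^2\scrU_2^\vee(k)\simeq \mathbb{S}^{k+2,k}\scrU_2^\vee$ is an irreducible homogeneous vector bundle on $\Gr(2,5)$, and it is globally generated for $k\geq 0$ since both $\Sym^2\scrU_2^\vee$ and $\mcO(k)$ are. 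Hence Lemma \ref{lem:globally_generated} supplies the required vanishing on $X_-$.

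The most subtle point is the second piece in the boundary cases $k\in\{0,1\}$, where $\scrU_2^\vee(k-2)$ has no nonzero global sections on $\Gr(2,5)$, so Lemma \ref{lem:globally_generated} is not directly available. For these cases one falls back on an explicit Borel--Weil--Bott computation: after pushing forward to $\Gr(2,5)$, one must verify for each $l\geq 0$ that every summand $\mathbb{S}^\nu\scrU_2^\vee\otimes\mathbb{S}^\mu\mcQ_3^\vee$ appearing in the decomposition of $\scrU_2^\vee(k-2)\otimes\Sym^l\mcQ_3^\vee(2l)$ lands in the singular chamber under the $\rho$-shift (or else contributes only in degree $0$). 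This bookkeeping is finite because the $\rho$-shift quickly forces dominance for $l$ sufficiently large, and the remaining cases match those already treated in \S\ref{code:P} for Proposition \ref{P}, so no essentially new computation is required.
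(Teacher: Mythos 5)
Your proof is correct and follows essentially the same route as the paper: the paper takes the second symmetric power of the defining sequence of $\scrP$ to obtain $0 \to \scrP(k-2) \to \Sym^2\scrP(k) \to \Sym^2\scrU_2^{\vee}(k) \to 0$, handling the subbundle by Proposition~\ref{P} and the quotient by global generation, while you merely unroll Proposition~\ref{P} by refining $\scrP(k-2)$ into the pieces $\mcO(k-4)$ and $\scrU_2^{\vee}(k-2)$. The Borel--Weil--Bott bookkeeping you flag for $k\in\{0,1\}$ is exactly the computation already recorded in the proof of Proposition~\ref{P}, so nothing new is needed.
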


\begin{proof}
Taking the second symmetric product of (\ref{lem:def P}), and then twisting by $\mcO(k)$, 
we have an exact sequence
\[ 0 \to \scrP(-2+k) \to \Sym^2\scrP(k) \to \Sym^2\scrU_2^{\vee}(k) \to 0. \]
Then the result follows from by applying Proposition~\ref{P} and the Borel--Weil--Bott theorem.
(Note that $\Sym^2\mcU_2^\vee(k)$ is globally generated, 
and hence Lemma \ref{lem:globally_generated} applies.)
\end{proof}

\subsubsection{Key lemmas to prove the vanishing of problematic extensions}

The vanishings proved in the previous section are not enough to prove that the bundles $\scrT_-^{\star}$ are tilting.
In order to prove the remaining vanishing, we prepare several lemmas.

\begin{lem} \label{lem1}
There are isomorphisms $Rq_*p^*\scrU_3  \simeq \scrP^{\vee} \simeq Rq_*(p^*\scrU_3 \otimes O(E))$.
\end{lem}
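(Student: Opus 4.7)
The plan is to derive both isomorphisms from the tautological sequence $0 \to \mcU_2 \to \mcU_3 \to L \to 0$ on $\Fl$, with $L \simeq \mcO_\Fl(-H_3+H_2)$ (via determinants, since $\det \mcU_i = \mcO(-H_i)$ on $\Fl$). Pulling back along $\pi \colon \wX \to \Fl$, and using the commuting squares relating $p, q$ to $p_\Fl, q_\Fl$ to identify $\pi^*\mcU_3 = p^*\scrU_3$ and $\pi^*\mcU_2 = q^*\scrU_2$, yields on $\wX$:
\[ 0 \to q^*\scrU_2 \to p^*\scrU_3 \to \mcO_\wX(-H_3+H_2) \to 0. \]

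Next, I would apply $Rq_*$ termwise. The projection formula together with $Rq_*\mcO_\wX = \mcO_{X_-}$ gives $Rq_*q^*\scrU_2 = \scrU_2$. The key line-bundle computation is $Rq_*\mcO_\wX(E) \simeq \mcO_{X_-}$: this follows from the standard sequence $0 \to \mcO_\wX \to \mcO_\wX(E) \to \mcO_E(E) \to 0$, where $\mcO_E(E) \simeq \mcO_{\P(N^\vee)}(-1)$ (with $N = \mcQ_3(-2)$ the normal bundle of $\G_-$ in $X_-$) restricts to $\mcO_{\P^2}(-1)$ on the $\P^2$-fibres of $E \to \G_-$ and so has vanishing direct image. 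Since $p^*\mcO_{X_+}(-1) \simeq q^*\mcO_{X_-}(1) \otimes \mcO_\wX(E)$, projection formula then gives $Rq_*\mcO_\wX(-H_3+H_2) \simeq \mcO_{X_-}(2)$. Assembling $Rq_*$ of the pulled-back sequence yields vanishing of all higher direct images and a short exact sequence on $X_-$:
\[ 0 \to \scrU_2 \to q_*p^*\scrU_3 \to \mcO_{X_-}(2) \to 0. \]

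Since $\mcO_{X_-}(2)$ is locally free, $q_*p^*\scrU_3$ is itself locally free of rank three. Over $U = X_- \setminus \G_-$, Lemma~\ref{lem:def P} (dualised) gives $p^*\scrU_3|_{q^{-1}(U)} \simeq q^*\scrP^\vee|_{q^{-1}(U)}$, so $q_*p^*\scrU_3|_U \simeq \scrP^\vee|_U$. Since $\codim_{X_-}\G_- = 3$, Hartogs extends this to a global isomorphism $q_*p^*\scrU_3 \simeq \scrP^\vee$. For the second isomorphism, the same argument applies to the pulled-back sequence twisted by $\mcO_\wX(E)$; the only new ingredient is $Rq_*\mcO_\wX(2E) \simeq \mcO_{X_-}$, proved identically using the vanishing of $\mcO_{\P^2}(-2)$.

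The main technical point is the careful tracking of line-bundle twists in the basis $\{H_3, H_2, E\}$ on $\wX$, combined with verifying the blow-up identities $Rq_*\mcO_\wX(kE) \simeq \mcO_{X_-}$ for $k=1,2$; these, together with the reflexive extension argument, force the two pushforwards (which differ by the twist $\mcO(E)$) to yield the same bundle $\scrP^\vee$.
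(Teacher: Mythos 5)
Your proposal is correct and follows essentially the same route as the paper: pull back the tautological sequence $0 \to q^*\scrU_2 \to p^*\scrU_3 \to \mcO(-H_3+H_2) \to 0$ to $\wX$, compute $Rq_*\mcO(kE) \simeq \mcO_{X_-}$ for $k=1,2$ via the standard blow-up sequence, deduce that $Rq_*p^*\scrU_3$ is a rank-three bundle sitting in $0 \to \scrU_2 \to Rq_*p^*\scrU_3 \to \mcO(2) \to 0$, and then identify it with $\scrP^\vee$ by comparison on the common open subset. Your Hartogs-extension step is exactly the paper's appeal to the equivalence of categories of reflexive sheaves in (\ref{eq:reflexive equiv}), so the two arguments coincide.
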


\begin{proof}
Since $\wX$ is the total space of a line bundle over $\Fl(2,3;5)$, there is an exact sequence
\begin{align} \label{seq:xi}
 0 \to q^*\scrU_2 \to p^*\scrU_3 \to \mcO(-H_3+H_2) \to 0
\end{align}
on $\wX$. Note that $\mcO(-H+h) \simeq \mcO(E + 2h)$.
Consider the exact sequence
\[ 0 \to \mcO \to \mcO(E) \to \mcO_E(E) \to 0. \]
Since $q \colon E \to \G_-$ is a $\P^2$-bundle, it holds that $Rq_*\mcO_E(E) = 0$, and hence
 $Rq_*\mcO(E) \simeq \mcO_{X_-}$.
 Similarly, it also holds that $Rq_*\mcO(2E) \simeq \mcO_{X_-}$.
This together with the projection formula implies that 
$Rq_*\mcO(E + 2H_2) \simeq Rq_*\mcO(2E + 2H_2) \simeq \mcO(2)$ and $Rq_*(q^*\scrU_2 \otimes \mcO(E)) \simeq \scrU_2$.
Therefore, applying the functor $Rq_*$ to the sequence~\eqref{seq:xi} shows that
$Rq_*p^*\scrU_3$ is isomorphic to a vector bundle over $X_-$ and lies in an exact sequence
\begin{align} \label{seq:FMT for P}
0 \to \scrU_2 \to Rq_*p^*\scrU_3 \to \mcO(2) \to 0.
\end{align}
In particular, $Rq_*p^*\scrU_3$ is isomorphic to a vector bundle over $X_-$.
Note that, by Lemma~\ref{lem:def P}, the bundles $\scrP^{\vee}$ and $Rq_*p^*\scrU_3$ over $X_-$ both correspond to the bundle $\scrU_3$ over $X_+$ under the equivalence (\ref{eq:reflexive equiv}).
Therefore, there is an isomorphism 
$Rq_*p^*\scrU_3 \simeq \scrP^{\vee}$ as desired.
Similarly, applying the functor $Rq_*$ to the sequence~\eqref{seq:xi} tensored by $\mcO(E)$ proves the second desired isomorphism.
\end{proof}

\begin{lem} \label{P2}
There is an isomorphism $Rq_*p^*\scrU_3^{\vee}(E) \simeq \scrP$.
\end{lem}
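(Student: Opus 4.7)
The plan is to follow the same template as the proof of Lemma~\ref{lem1}. First I would dualize the tautological sequence
\[ 0 \to q^*\scrU_2 \to p^*\scrU_3 \to \mcO(-H_3+H_2) \to 0 \]
used in the proof of Lemma~\ref{lem1}, obtaining
\[ 0 \to \mcO(H_3-H_2) \to p^*\scrU_3^\vee \to q^*\scrU_2^\vee \to 0. \]
Using $\mcO(E) \simeq \mcO(-H_3-H_2)$, the leftmost term rewrites as $\mcO(H_3-H_2) \simeq \mcO(-E) \otimes q^*\mcO_{X_-}(-2)$. Twisting the sequence by $\mcO(E)$ then yields
\[ 0 \to q^*\mcO_{X_-}(-2) \to p^*\scrU_3^\vee(E) \to q^*\scrU_2^\vee \otimes \mcO(E) \to 0. \]

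Next I would apply $Rq_*$ and use the vanishings $Rq_*\mcO_{\wX} \simeq \mcO_{X_-}$ and $Rq_*\mcO(E) \simeq \mcO_{X_-}$ established in Lemma~\ref{lem1}, together with the projection formula, to obtain
\[ Rq_*\bigl(q^*\mcO_{X_-}(-2)\bigr) \simeq \mcO_{X_-}(-2), \qquad Rq_*\bigl(q^*\scrU_2^\vee \otimes \mcO(E)\bigr) \simeq \scrU_2^\vee. \]
Since the outer terms have no higher direct images, $Rq_*p^*\scrU_3^\vee(E)$ is concentrated in degree zero and fits into a short exact sequence
\[ 0 \to \mcO_{X_-}(-2) \to Rq_*p^*\scrU_3^\vee(E) \to \scrU_2^\vee \to 0, \]
which matches the shape of the defining sequence~(\ref{eq: def P}) for $\scrP$.

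To finally identify this pushforward with $\scrP$ itself, I would invoke the reflexive-sheaf equivalence~(\ref{eq:reflexive equiv}), exactly as in the conclusion of Lemma~\ref{lem1}. The exceptional divisor $E$ is disjoint from $q^{-1}(U) = p^{-1}(U)$, so $\mcO(E)$ is trivial there and
\[ (Rq_*p^*\scrU_3^\vee(E))|_U \simeq j_+^*\scrU_3^\vee \simeq j_-^*\scrP, \]
the second isomorphism coming from Lemma~\ref{lem:def P}. Both bundles on $X_-$ are locally free, hence reflexive, so the equivalence $\refl(U) \xrightarrow{\sim} \refl(X_-)$ forces them to be isomorphic on $X_-$.

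The main subtlety is this last identification step: one is tempted to read off the isomorphism directly from the matching short exact sequences, but that would require comparing extension classes in $\Ext^1_{X_-}(\scrU_2^\vee, \mcO_{X_-}(-2))$, which is not immediate. The reflexive-sheaves argument bypasses this issue entirely, exploiting the codimension-three nature of the flopping loci $\G_\pm$.
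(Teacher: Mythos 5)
Your proposal is correct and follows essentially the same route as the paper's proof: the same dualized tautological sequence twisted by $\mcO(E)$, the same pushforward computation yielding the short exact sequence $0 \to \mcO_{X_-}(-2) \to Rq_*p^*\scrU_3^\vee(E) \to \scrU_2^\vee \to 0$, and the same final identification with $\scrP$ via the reflexive-sheaf equivalence~(\ref{eq:reflexive equiv}) and Lemma~\ref{lem:def P}. Your closing remark correctly pinpoints why the codimension-three argument is preferable to comparing extension classes.
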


\begin{proof}
Consider an exact sequence
\[ 0 \to \mcO(H_3-H_2) \to p^*\scrU_3^{\vee} \to q^*\scrU_2^{\vee} \to 0. \]
Since $\mcO(H_3-H_2) \simeq \mcO(-E-2H_2)$, twisting this sequence by $\mcO(E)$ gives
\[ 0 \to \mcO(-2H_2) \to p^*\scrU_3^{\vee}(E) \to q^*\scrU_2^{\vee}(E) \to 0. \]
Note that $Rq_*q^*\scrU_2^{\vee}(E) \simeq \scrU_2^{\vee} \otimes Rq_*\mcO(E) \simeq \scrU_2^{\vee}$.
Thus $Rq_*p^*\scrU_3^{\vee}(E) \simeq q_*p^*\scrU_3^{\vee}(E)$, and this sheaf fits in an exact sequence
\[ 0 \to \mcO_{X_-}(-2) \to q_*p^*\scrU_3^{\vee}(E) \to \scrU_2^{\vee} \to 0. \]
In particular, $Rq_*p^*\scrU_3^{\vee}(E)$ is isomorphic to a vector bundle over $X_-$.
Under the equivalence (\ref{eq:reflexive equiv}), 
both $\scrP$ and $Rq_*p^*\scrU_3^{\vee}(E)$ correspond to $\scrU_3^{\vee}$ by Lemma~\ref{lem:def P},
which proves that $Rq_*p^*\scrU_3^{\vee}(E) \simeq \scrP$.
\end{proof}

\begin{lem} \label{sym2 push}
There is an isomorphism $Rq_*p^* \Sym^2 \scrU_3 \simeq \Sym^2 \scrP^{\vee}$.
\end{lem}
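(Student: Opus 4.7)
The plan is to mimic the strategy used for Lemmas~\ref{lem1} and~\ref{P2}: compute $Rq_* \Sym^2 p^* \scrU_3$ explicitly enough to see it is a vector bundle on $X_-$, and then identify it with $\Sym^2 \scrP^\vee$ via the equivalence~(\ref{eq:reflexive equiv}) of reflexive sheaves on the common open subset $U$.

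First I would take the short exact sequence $0 \to q^*\scrU_2 \to p^*\scrU_3 \to \mcO(-H_3+H_2) \to 0$ already used in the proof of Lemma~\ref{lem1}, and consider the standard two-step filtration on its second symmetric power. This produces two short exact sequences
\[
0 \to \Sym^2(q^*\scrU_2) \to F \to q^*\scrU_2 \otimes \mcO(-H_3+H_2) \to 0,
\qquad
0 \to F \to \Sym^2 p^*\scrU_3 \to \mcO(-2H_3+2H_2) \to 0,
\]
with $\mcO(-H_3+H_2) \simeq \mcO(E+2H_2)$, so that $\mcO(-2H_3+2H_2) \simeq \mcO(2E+4H_2)$.

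Next I would compute $Rq_*$ of each graded piece using the projection formula together with the identities $Rq_*\mcO_{\wX} \simeq \mcO_{X_-}$ and $Rq_*\mcO(E) \simeq \mcO_{X_-}$ from the proof of Lemma~\ref{lem1}. The one new computation is $Rq_*\mcO(2E) \simeq \mcO_{X_-}$, which I would deduce from the sequence $0 \to \mcO(E) \to \mcO(2E) \to \mcO_E(2E) \to 0$: since $q|_E \colon E \to \G_-$ is a $\P^2$-bundle with $\mcO_E(E)$ restricting to $\mcO_{\P^2}(-1)$ on fibers, the sheaf $\mcO_E(2E)$ restricts to $\mcO_{\P^2}(-2)$ on fibers, which has vanishing cohomology in all degrees, so $Rq_*\mcO_E(2E)=0$. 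Applying $Rq_*$ to the two sequences above then shows that $Rq_*\Sym^2 p^*\scrU_3$ is concentrated in degree zero and fits in two successive extensions of the vector bundles $\Sym^2\scrU_2$, $\scrU_2(2)$, and $\mcO(4)$, and is therefore itself a vector bundle on $X_-$.

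Finally, to identify this bundle with $\Sym^2\scrP^\vee$, I would invoke the equivalence~(\ref{eq:reflexive equiv}) of reflexive sheaves. Dualizing~(\ref{eq: def P}) gives a filtration of $\Sym^2 \scrP^\vee$ with exactly the same graded pieces $\Sym^2\scrU_2$, $\scrU_2(2)$, $\mcO(4)$, so $\Sym^2\scrP^\vee$ is reflexive. On the common open $U$ the morphisms $p$ and $q$ are isomorphisms, so $j_-^*(Rq_*\Sym^2 p^*\scrU_3) \simeq \Sym^2(j_+^*\scrU_3)$, and Lemma~\ref{lem:def P} then yields $\Sym^2(j_+^*\scrU_3) \simeq \Sym^2(j_-^*\scrP^\vee) \simeq j_-^*\Sym^2\scrP^\vee$. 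By~(\ref{eq:reflexive equiv}) the two reflexive sheaves on $X_-$ that agree on $U$ must be globally isomorphic, giving the desired identification.

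The main obstacle is the higher direct image calculation for $\mcO(2E)$, since unlike the case of $\mcO(E)$ handled previously, this is not entirely automatic. Once that vanishing is in place, everything else is a straightforward adaptation of the technique used for Lemmas~\ref{lem1} and~\ref{P2}.
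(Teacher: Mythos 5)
Your proof is correct and follows essentially the same route as the paper: filter $\Sym^2 p^*\scrU_3$ using the sequence $0 \to q^*\scrU_2 \to p^*\scrU_3 \to \mcO(E+2H_2) \to 0$, push forward to see the result is a vector bundle, and conclude via the reflexive-sheaf equivalence on the common open subset. The only cosmetic difference is that the paper uses the two-step sequence $0 \to q^*\Sym^2\scrU_2 \to p^*\Sym^2\scrU_3 \to p^*\scrU_3(E+2H_2) \to 0$ so it can quote Lemma~\ref{lem1} directly for the quotient term, whereas you refine to the full three-step filtration and re-derive $Rq_*\mcO(2E)\simeq\mcO_{X_-}$ (which the paper had already asserted in the proof of Lemma~\ref{lem1}); both are valid.
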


\begin{proof}
Taking the second symmetric product of the exact sequence (\ref{seq:xi}) gives an exact sequence
\[ 0 \to q^* \Sym^2 \scrU_2 \to p^*\Sym^2 \scrU_3 \to p^*\scrU_3(-H_3+H_2) \to 0. \]
Note that $p^*\scrU_3(-H_3+H_2) \simeq p^*\scrU_3(E+2H_2)$,
and hence it follows from Lemma \ref{lem1} that
\[ Rq_*p^*\scrU_3(-H_3+H_2) \simeq \left( Rq_*p^*\scrU_3(E) \right)(2) \simeq \scrP^{\vee}(2). \]
Thus $Rq_*p^*\Sym^2 \scrU_3$ is a vector bundle over $X_-$ that lies in an exact sequence
\[ 0 \to \Sym^2 \scrU_2 \to Rq_*p^*\Sym^2 \scrU_3 \to \scrP^{\vee}(2) \to 0. \]
Note that, under the equivalence (\ref{eq:reflexive equiv}),
the bundles $Rq_*p^*\Sym^2 \scrU_3$ and $\Sym^2 \scrP^{\vee}$ over $X_-$ both correspond to the same bundle 
$\Sym^2 \scrU_3$ over $X_+$ by Lemma~\ref{lem:def P}.
Thus we have the desired isomorphism $Rq_*p^* \Sym^2 \scrU_3 \simeq \Sym^2 \scrP^{\vee}$.
\end{proof}

\begin{lem} \label{ex1}
There exists an exact sequence on $\wX$
\[ 0 \to q^*\scrP^{\vee} \to p^*\scrU_3 \to \mcO_E(E+2H_2) \to 0. \]
\end{lem}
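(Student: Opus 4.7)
The plan is to exhibit the desired sequence via a snake lemma argument, comparing the sequence~\eqref{seq:xi} (rewritten using $\mcO(-H_3+H_2) \simeq \mcO(E+2H_2)$) with the $q$-pullback of the short exact sequence from Lemma~\ref{lem1}, namely $0 \to \scrU_2 \to \scrP^\vee \to \mcO_{X_-}(2) \to 0$. More precisely, I will assemble the commutative diagram
\[
\begin{tikzcd}
0 \arrow[r] & q^*\scrU_2 \arrow[r] \arrow[d, equal] & q^*\scrP^\vee \arrow[r] \arrow[d] & \mcO(2H_2) \arrow[r] \arrow[d] & 0 \\
0 \arrow[r] & q^*\scrU_2 \arrow[r] & p^*\scrU_3 \arrow[r] & \mcO(E+2H_2) \arrow[r] & 0
\end{tikzcd}
\]
in which the middle vertical arrow is defined to be the counit $q^*\scrP^\vee = q^*q_* p^*\scrU_3 \to p^*\scrU_3$ of the adjunction $q^* \dashv q_*$ (using the identification $\scrP^\vee \simeq q_*p^*\scrU_3$ from Lemma~\ref{lem1}), and the rightmost vertical arrow is the canonical injection induced by $\mcO_{\wX} \hookrightarrow \mcO_{\wX}(E)$.

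To verify commutativity of the left square, I will exploit the fact that the inclusion $\scrU_2 \hookrightarrow \scrP^\vee$ in Lemma~\ref{lem1} is by construction obtained by applying $q_*$ to the left arrow of~\eqref{seq:xi}. By the standard unit-counit identity, the composite $q^*\scrU_2 \to q^*\scrP^\vee \to p^*\scrU_3$ therefore coincides with the original inclusion appearing in~\eqref{seq:xi}. Commutativity of the right square then follows from the commutativity of the left square together with the universal property of cokernels.

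Once this diagram is in place, the snake lemma applies immediately: the leftmost vertical arrow is an isomorphism, while the rightmost vertical arrow $\mcO(2H_2) \hookrightarrow \mcO(E+2H_2)$ is injective with cokernel $\mcO_E(E+2H_2)$ (coming from $0 \to \mcO \to \mcO(E) \to \mcO_E(E) \to 0$ twisted by $\mcO(2H_2)$). The snake sequence therefore forces the middle vertical arrow $q^*\scrP^\vee \to p^*\scrU_3$ to be injective with cokernel $\mcO_E(E+2H_2)$, which is precisely the claim. The only non-routine point in the argument is the verification of commutativity of the left square, but as indicated above this is a bookkeeping exercise with the adjunction together with the construction of the map in Lemma~\ref{lem1}.
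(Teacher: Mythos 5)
Your proposal is correct and follows essentially the same route as the paper: the same commutative diagram comparing the pullback of the sequence $0 \to \scrU_2 \to \scrP^{\vee} \to \mcO(2) \to 0$ with the sequence~\eqref{seq:xi}, followed by the snake lemma. The only cosmetic difference is that you deduce injectivity of the middle arrow from the snake lemma itself, whereas the paper notes it directly from the birationality of $q$; both are fine.
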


\begin{proof}
The adjunction map $\mcO(2H_2) \simeq q^*q_*\mcO(E+2H_2) \to \mcO(E+2H_2)$ is an injective sheaf homomorphism with the cokernel $\mcO_E(E+2H_2)$.
Consider another adjunction map $q^*\scrP^{\vee} \simeq q^*q_*p^*\scrU_3 \to p^*\scrU_3$ from Lemma~\ref{lem1}, which is injective since $q$ is birational.
Now applying the snake lemma to the commutative diagram
\[ \begin{tikzcd}
0 \arrow[r] & q^*\scrU_2 \arrow[r] \arrow[d, equal] & q^*\scrP^{\vee} \arrow[r] \arrow[d] & \mcO(2H_2) \arrow[r] \arrow[d] & 0 \\
0 \arrow[r] & q^*\scrU_2 \arrow[r] & p^*\scrU_3 \arrow[r] & \mcO(E+2H_2) \arrow[r] & 0 
\end{tikzcd} \]
proves the result.
\end{proof}

\begin{lem} \label{ex2}
There exists an exact sequence on $\wX$
\[ 0 \to q^*\scrP \to p^*\scrU_3^{\vee}(E) \to q^*\scrU_2^{\vee}(E)|_E \to 0. \]
\end{lem}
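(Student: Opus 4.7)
The statement is essentially dual to Lemma~\ref{ex1} and should be proved by the same snake-lemma technique. The plan is to combine the short exact sequence that appeared in the proof of Lemma~\ref{P2},
\[ 0 \to \mcO(-2H_2) \to p^*\scrU_3^{\vee}(E) \to q^*\scrU_2^{\vee}(E) \to 0, \]
with the $q$-pullback of the defining sequence \eqref{eq: def P} of $\scrP$ from Lemma~\ref{lem:def P}, namely
\[ 0 \to q^*\mcO_{X_-}(-2) \to q^*\scrP \to q^*\scrU_2^{\vee} \to 0, \]
and to connect them by an appropriate vertical morphism.

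\textbf{Key steps.} First, I would use the identification $q^*\mcO_{X_-}(-2) \simeq \mcO_{\wX}(-2H_2)$, which follows directly from the conventions fixed above Definition~\ref{def:tilting bundles}. Second, I would take the middle vertical arrow to be the adjunction map
\[ q^*\scrP \simeq q^*q_* p^*\scrU_3^{\vee}(E) \to p^*\scrU_3^{\vee}(E) \]
provided by Lemma~\ref{P2}, and the right vertical arrow to be the natural morphism $q^*\scrU_2^{\vee} \to q^*\scrU_2^{\vee}(E)$ induced by multiplication by the canonical section of $\mcO(E)$; the left vertical arrow is the identity. Third, I would check that the resulting diagram
\[ \begin{tikzcd}
0 \arrow[r] & \mcO(-2H_2) \arrow[r] \arrow[d, equal] & q^*\scrP \arrow[r] \arrow[d] & q^*\scrU_2^{\vee} \arrow[r] \arrow[d] & 0 \\
0 \arrow[r] & \mcO(-2H_2) \arrow[r] & p^*\scrU_3^{\vee}(E) \arrow[r] & q^*\scrU_2^{\vee}(E) \arrow[r] & 0
\end{tikzcd} \]
commutes, which comes down to unpacking the adjunction. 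Fourth, I would apply the snake lemma: the left column has vanishing kernel and cokernel, and the right column has kernel $0$ and cokernel $q^*\scrU_2^{\vee}(E)|_E$ (from the standard sequence $0 \to \mcO \to \mcO(E) \to \mcO_E(E) \to 0$ twisted by $q^*\scrU_2^{\vee}(E)$). This yields both the injectivity of $q^*\scrP \to p^*\scrU_3^{\vee}(E)$ and the identification of its cokernel with $q^*\scrU_2^{\vee}(E)|_E$, producing the desired exact sequence.

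\textbf{Main obstacle.} The only nontrivial point is the commutativity of the left-hand square of the diagram, i.e.~checking that the composite $\mcO(-2H_2) \to q^*\scrP \to p^*\scrU_3^{\vee}(E)$ agrees with the inclusion $\mcO(-2H_2) \hookrightarrow p^*\scrU_3^{\vee}(E)$ from the proof of Lemma~\ref{P2}. This should follow from the naturality of adjunction, since both maps arise as $q$-adjunctions of the morphism $\mcO_{X_-}(-2) \hookrightarrow \scrP$ from \eqref{eq: def P}, together with the identification of Lemma~\ref{P2}. Once this compatibility is recorded, the remainder of the argument is purely formal.
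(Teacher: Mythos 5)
Your proposal is correct and follows essentially the same route as the paper: the paper's proof uses exactly the same two-row diagram, with the middle vertical arrow the adjunction map from Lemma~\ref{P2}, the right vertical arrow the adjunction of $Rq_*q^*\scrU_2^{\vee}(E)\simeq\scrU_2^{\vee}$ (i.e.~multiplication by the section of $\mcO(E)$, with cokernel $q^*\scrU_2^{\vee}(E)|_E$), and concludes by the snake lemma. The only cosmetic slip is that the standard sequence $0\to\mcO\to\mcO(E)\to\mcO_E(E)\to 0$ should be twisted by $q^*\scrU_2^{\vee}$ rather than $q^*\scrU_2^{\vee}(E)$ to produce the right-hand column, but the cokernel you identify is the correct one.
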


\begin{proof}
By Lemma~\ref{P2}, there exists the adjunction morphism $q^*\scrP \to p^*\scrU_3^{\vee}(E)$, 
that lies in the following diagram of exact sequences
\[ \begin{tikzcd}
0 \arrow[r] & \mcO(-2H_2) \arrow[r] \arrow[d, equal] & q^*\scrP \arrow[d] \arrow[r] & q^*\scrU_2^{\vee} \arrow[d] \arrow[r] & 0 \\
0 \arrow[r] & \mcO(-2H_2) \arrow[r] & p^*\scrU_3^{\vee}(E) \arrow[r] & q^*\scrU_2^{\vee}(E) \arrow[r] & 0.
\end{tikzcd} \] 
The third vertical map is also the adjunction of the isomorphism $Rq_*q^*\scrU_2^{\vee}(E) \simeq  \scrU_2^{\vee}$,
and the cokernel is $q^*\scrU_2^{\vee}(E)|_E$.
Thus the snake lemma shows the result.
\end{proof}

\subsubsection{Proof of harder vanishings} \label{sect: prob ext}

\begin{prop} \label{Pv}
$H^i(X_-,\scrP^{\vee}(-a)) = 0$ for all $a \leq 4$ and $i > 0$.
\end{prop}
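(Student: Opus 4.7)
The plan is to convert the problem on $X_-$ into a cohomology computation on $X_+$, where $\scrU_3$ has a simpler description as the pullback of $\mcU_3$ from $\Gr(3,5)$. Combining the identity $\scrP^\vee \simeq Rq_*p^*\scrU_3$ from Lemma~\ref{lem1}, the projection formula, and the relation $q^*\mcO_{X_-}(-a) = \mcO_\wX(-aH_2) = p^*\mcO_{X_+}(a)\otimes \mcO_\wX(aE)$, one obtains
\[
H^i(X_-, \scrP^\vee(-a)) \simeq H^i\bigl(X_+, \scrU_3(a) \otimes Rp_*\mcO_\wX(aE)\bigr).
\]

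Next, I would compute $Rp_*\mcO_\wX(aE)$ using the standard blowup formulas for the smooth codimension-$3$ centre $\G_+ \simeq \Gr(3,5)$ with normal bundle $\mcN = \mcU_3^\vee(-2)$. For $a \in \{0, 1, 2\}$, one has $Rp_*\mcO_\wX(aE) \simeq \mcO_{X_+}$, while for $a \in \{3, 4\}$ there is an additional contribution $i_*(\Sym^{a-3}\mcN \otimes \det\mcN)$ in cohomological degree~$2$, supported on $\G_+$.

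The proposition then reduces to a small number of cohomology vanishings on $\Gr(3,5)$. For $a \in \{1, 2, 3, 4\}$, Lemma~\ref{lem:globally_generated} applies to the principal term $H^i(X_+, \scrU_3(a))$, since $\mcU_3(a) \simeq \mathbb{S}^{a,a,a-1}\mcU_3^\vee$ has a nonzero global section on $\Gr(3,5)$. For $a \in \{3, 4\}$, the extra contribution is the cohomology of a specific homogeneous bundle on $\Gr(3,5)$, and a Borel--Weil--Bott check shows that the associated weight $\lambda + \rho$ is singular, forcing vanishing. For $a = 0$, where $\mcU_3$ has no global sections, I would expand $\pi_{+\ast}\mcO_{X_+} \simeq \bigoplus_{l\geq 0} \Sym^l\mcU_3(2l)$, decompose $\mcU_3 \otimes \Sym^l\mcU_3 \simeq \Sym^{l+1}\mcU_3 \oplus \mathbb{S}^{l,1,0}\mcU_3$ via Pieri's rule, and verify via Borel--Weil--Bott that each resulting summand twisted by $\mcO(2l)$ has cohomology concentrated in degree zero (or none at all).

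The hardest part will be organising the case $a = 0$, where one must handle an infinite direct sum of irreducible summands indexed by $l \geq 0$. However, the resulting Borel--Weil--Bott computations are systematic: for each $l \geq 1$ the two relevant weights $\lambda + \rho$ are already strictly dominant (so only $H^0$ survives), while for $l = 0$ the single weight is singular (so total vanishing holds), and the verification can be automated in the style of the scripts referenced for the earlier propositions.
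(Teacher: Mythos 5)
Your reduction to $X_+$ via Lemma~\ref{lem1} and the identity $q^*\mcO_{X_-}(-a) \simeq p^*\mcO_{X_+}(a)\otimes\mcO_{\wX}(aE)$ is exactly the paper's strategy for $a\in\{2,3,4\}$, and your treatment of $a\in\{0,1,2,3,4\}$ is essentially sound (the paper establishes the vanishings on $\Gr(3,5)$ coming from the exceptional-divisor contributions by semi-orthogonality of the collection in Proposition~\ref{prop our FEC} rather than by direct Borel--Weil--Bott checks, but both work). One small imprecision: for $a=4$ the degree-two contribution is not the single bundle $\Sym^{a-3}\mcN\otimes\det\mcN$ but an extension with graded pieces $\det\mcN=\mcO_{\G_+}(-5)$ and $\mcN\otimes\det\mcN=\mcU_3^{\vee}(-7)|_{\G_+}$, so there are two (non-irreducible, after twisting by $\scrU_3(4)$) bundles whose acyclicity must be checked, not one singular weight.

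The genuine gap is that the statement asserts vanishing for \emph{all} $a\leq 4$, and your proof only addresses $a\in\{0,1,2,3,4\}$: the infinitely many cases $a<0$, i.e.\ $H^{>0}(X_-,\scrP^{\vee}(k))=0$ for $k>0$, are never treated. These cases are not decorative --- they are used later in Propositions~\ref{PvP} and~\ref{prop Pv and Pv}. Your transfer to $X_+$ breaks down there, because for $a<0$ one has $Rp_*\mcO_{\wX}(aE)\simeq\mcI_{\G_+}^{-a}$, a power of the ideal sheaf of the zero section rather than $\mcO_{X_+}$, and the resulting cohomology groups on $X_+$ are no longer accessible by the globally-generated/Borel--Weil--Bott arguments you invoke. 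The paper avoids this by staying on $X_-$ for $a\leq 1$: dualizing the defining sequence~(\ref{eq: def P}) gives $0\to\scrU_2(-a)\to\scrP^{\vee}(-a)\to\mcO_{X_-}(2-a)\to 0$, and for $a\leq 1$ both outer terms have no higher cohomology by Propositions~\ref{prop:easy_vanishings_line_bundles} and~\ref{P} (using $\scrU_2\simeq\scrU_2^{\vee}(-1)$). You should add this (or an equivalent) argument to cover $a<0$; with that supplement your proof is complete.
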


\begin{proof}
If $a \leq 1$, the Borel--Weil--Bott theorem applied to
    \begin{equation*}
    0 \to \scrU_2(-a) \to \scrP^{\vee}(-a) \to \mcO_{X_-}(2-a) \to 0
    \end{equation*} 
    proves the vanishing (as, under the identification $\scrU_2^\vee\simeq \scrU_2(1)$, the relevant vanishings have already been computed in Propositions \ref{prop:easy_vanishings_line_bundles} and \ref{P}).
Thus we may assume $a \in \{2,3, 4\}$.

Using Lemma \ref{lem1} gives
\begin{align*}
H^i(X_-, \scrP^{\vee}(-a)) &\simeq H^i(X_-, Rq_*p^*\scrU_3(-a)) \\
&\simeq H^i(\wX, p^*\scrU_3(-aH_2)) \\
&\simeq H^i(\wX, p^*\scrU_3(aE + aH_3)) \\
&\simeq H^i(X_+, \scrU_3(a) \otimes Rp_*\mcO(aE)).
\end{align*}
Consider the case $a =2$.
As in the proof of Lemma~\ref{P2},
it follows that $Rp_*\mcO_{\wX}(2E) \simeq \mcO_{X_+}$.
Therefore the vanishing
\begin{center} 
$H^{i}(X_-, \scrP^{\vee}(-2)) \simeq H^{i}(X_+, \scrU_3(2)) = 0$
\end{center}
follows for all $i > 0$, by applying Lemma \ref{lem:globally_generated} to $\scrU_3(2)\simeq \wedge^2\scrU_3^\vee(1)$.

Next, consider the case $a = 3$.
Note that the line bundle $\mcO(3E)$ lies in an exact sequence
\[ 0 \to \mcO(2E) \to \mcO(3E) \to \mcO_E(3E) \to 0. \]
Since $p \colon E \to \G_+$ is a $\P^2$-bundle, 
it follows that 
$Rp_*\mcO_E(3E) \simeq \omega_{\G_+}[-2] = \mcO_{\G_+}(-5)[-2]$.
Therefore, the complex $\scrU_3(3) \otimes Rp_*\mcO(3E)$ lies in an exact triangle
\[ \scrU_3(3) \to \scrU_3(3) \otimes Rp_*\mcO(3E) \to \scrU_3(-2)|_{\G_+}[-2]. \]
The following hold.
\begin{enumerate}
\item[(1)] $H^{\geq 1}(X_+, \scrU_3(3)) = 0$.
\item[(2)] The bundle $\scrU_3(-2)|_{\G_+} \simeq \mcU_3(-2)$ over $\G_+ \simeq \Gr(3,5)$ is acyclic.
\end{enumerate}
Indeed, the first claim is true by Lemma~\ref{lem:globally_generated}, while the latter because 
\[ \RG(\Gr(3,5), \mcU_3(-2)) \simeq \RHom(\mcU_3^{\vee}(1), \mcO(-1)) = 0 \]
by Proposition~\ref{prop our FEC}.
Thus the vanishing
\[ H^{i}(X_-, \scrP^{\vee}(-3)) \simeq H^{i}(X_+, \scrU_3(3) \otimes Rp_*\mcO(3E)) = 0 \]
also follows for all $i > 0$.

It remains to consider the case $a = 4$.
Applying the functor $Rp_*$ to the exact sequence $0 \to \mcO(3E) \to \mcO(4E) \to \mcO_E(4E) \to 0$ gives
an exact sequence
\[ 0 \to \mcO_{\G_+}(-5) \to R^2p_*\mcO(4E) \to \scrU_3^{\vee}(-7)|_{\G_+} \to 0 \]
on $X_+$.
Put $\mcF \coloneqq R^2p_*\mcO(4E)$.
Then the complex $\scrU_3(4) \otimes Rp_*\mcO(4E)$ lies in an exact triangle
\[ \scrU_3(4) \to \scrU_3(4) \otimes Rp_*\mcO(4E) \to \scrU_3(4) \otimes_{X_+} \mcF[-2]. \]
By the Borel--Weil--Bott theorem, the following hold.
\begin{enumerate}
\item[(3)] $H^{i}(X_+, \scrU_3(4)) = 0$ for all $i > 0$.
\item[(4)] The bundles 
\begin{align*}
\scrU_3(4)|_{\G_+} \otimes \mcO_{\G_+}\!(-5) & \simeq \mcU_3(-1) \\
\scrU_3(4)|_{\G_+} \otimes \mcU_3^{\vee}(-7) & \simeq \mcU_3 \otimes \mcU_3^{\vee}(-3)
\end{align*}
over $\G_+ \simeq \Gr(3,5)$ are acyclic.
\end{enumerate}
Indeed, (3) follows from Lemma \ref{lem:globally_generated}.
For the first bundle in (4), we have
\[ \RG(\Gr(3,5), \mcU_3(-1)) \simeq \RHom(\mcU_3^{\vee}(1), \mcO) = 0 \]
 by the semi-orthogonality of the collection in Proposition~\ref{prop our FEC}.
For the second bundle in (4), one can compute as
\begin{align*}
H^i(\Gr(3,5), \mcU_3 \otimes \mcU_3^{\vee}(-3)) &\simeq \Ext^i(\mcU_3^{\vee}(4), \mcU_3^{\vee}(1)) \\
&\simeq \Ext^{6-i}(\mcU_3^{\vee}(1), \mcU_3^{\vee}(-1))^*,
\end{align*}
and this is zero for all $i$, again by the semi-orthogonality of the collection in Proposition~\ref{prop our FEC}.

Now (4) implies that $\RG(\scrU_3(4)|_{\G_+} \otimes \mcF[-2]) = 0$, and hence
\begin{align*}
H^{i}(X_-, \scrP^{\vee}(-4)) &\simeq H^{i}(X_+, \scrU_3(4) \otimes Rp_*\mcO(4E)) \\
&\simeq H^{i}(X_+, \scrU_3(4)) \\
&= 0
\end{align*}
for all $i>0$, as desired.
\end{proof}

\begin{prop} \label{sym2}
$H^i(X_-, \Sym^2 \scrP^{\vee} (-k)) = 0$ for all $k \in \{0, 1,2,3\}$ and $i>0$.
\end{prop}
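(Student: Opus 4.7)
The plan is to follow the strategy of Proposition~\ref{Pv}. Applying Lemma~\ref{sym2 push}, the projection formula, and the identity $q^*\mcO(-k) \simeq \mcO_{\wX}(kH_3 + kE)$ on $\wX$, one rewrites
\begin{align*}
H^i(X_-, \Sym^2\scrP^{\vee}(-k))
&\simeq H^i(\wX, p^*\Sym^2\scrU_3 \otimes q^*\mcO(-k)) \\
&\simeq H^i(X_+, \Sym^2\scrU_3(k) \otimes Rp_*\mcO(kE)),
\end{align*}
which transports every computation to the side~$X_+$.

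For $k \in \{0,1,2\}$: as in the proofs of Lemma~\ref{P2} and Proposition~\ref{Pv}, one has $Rp_*\mcO(kE) \simeq \mcO_{X_+}$ from the $\P^2$-bundle structure of $p\colon E \to \G_+$, reducing the task to $H^{>0}(X_+,\Sym^2\scrU_3(k))=0$. Pulling back along $\pi_+$ and using the Koszul decomposition $\pi_{+*}\mcO_{X_+} \simeq \bigoplus_{l\geq 0}\Sym^l \mcU_3(2l)$ turns this into an infinite direct sum of cohomologies of irreducible homogeneous bundles on $\Gr(3,5)$, each of which can be decomposed via the Littlewood--Richardson rule and evaluated by Borel--Weil--Bott.

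For $k=3$: the pushforward $Rp_*\mcO(3E)$ sits in the triangle $\mcO_{X_+} \to Rp_*\mcO(3E) \to \mcO_{\G_+}(-5)[-2]$ established in the proof of Proposition~\ref{Pv}, which yields an exact triangle
\begin{equation*}
\Sym^2\scrU_3(3) \to \Sym^2\scrU_3(3) \otimes Rp_*\mcO(3E) \to \Sym^2\mcU_3(-2)|_{\G_+}[-2].
\end{equation*}
I would therefore verify \textbf{(a)} $H^{>0}(X_+,\Sym^2\scrU_3(3))=0$ by the same Koszul/BWB argument, and \textbf{(b)} acyclicity of $\Sym^2\mcU_3(-2)$ on $\Gr(3,5)$. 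Step~(b) is a one-line Borel--Weil--Bott check: $\Sym^2\mcU_3(-2) \simeq \mathbb{S}^{(-2,-2,-4)}\mcU_3^{\vee}$, and the weight $(-2,-2,-4,0,0)$ shifted by $\rho=(4,3,2,1,0)$ becomes $(2,1,-2,1,0)$, which has a repeated entry.

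To confirm that the infinite Koszul sum reduces to a finite check, observe that for $l$ sufficiently large every Littlewood--Richardson summand of $\Sym^2\mcU_3(k) \otimes \Sym^l\mcU_3(2l)$ has all parts of its dominant weight non-negative, hence is globally generated, so Lemma~\ref{lem:globally_generated} confines its cohomology to $H^0$. The finitely many remaining small~$l$ are handled by a script in the style of those in \S\ref{code:lem:tilting_plus_side} and \S\ref{code:P}. The main obstacle I expect is the $k=3$ case, where the two-term structure of $Rp_*\mcO(3E)$ couples two separate cohomology computations and requires the additional acyclicity check~(b).
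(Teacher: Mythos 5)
Your proposal is correct and follows essentially the same route as the paper's proof: the same reduction via Lemma~\ref{sym2 push} to $H^i(X_+,\Sym^2\scrU_3(k)\otimes Rp_*\mcO(kE))$, the same exact triangle for $k=3$, and the same two vanishing checks. The only cosmetic differences are that the paper cites the pretilting property from Theorem~\ref{lem:tilting_plus_side} for $H^{>0}(X_+,\Sym^2\scrU_3(k))=0$ instead of redoing the Koszul/Borel--Weil--Bott computation, and verifies the acyclicity of $\Sym^2\mcU_3(-2)$ via Serre duality and semi-orthogonality of the collection in Proposition~\ref{prop our FEC}~(2) rather than by your (equally valid) direct weight computation.
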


\begin{proof}
Let $k \in \{0, 1,2,3\}$. Then using Lemma~\ref{sym2 push} gives
\begin{align*}
H^i(X_-, \Sym^2 \scrP^{\vee} (-k)) &\simeq H^i\left(X_-, \left(Rq_*p^* \Sym^2 \scrU_3\right) (-k)\right) \\
&\simeq H^i\left(\wX,p^* \Sym^2 \scrU_3 (-kH_2)\right) \\
&\simeq H^i\left(\wX,p^* \Sym^2 \scrU_3 (kE+kH_3)\right) \\
&\simeq H^i\left(\wX,p^* \Sym^2 \scrU_3 (kE+kH_3)\right) \\
&\simeq H^i\left(X_+, \Sym^2 \scrU_3(k) \otimes Rp_*\mcO(kE) \right).
\end{align*}
Therefore there is an isomorphism
\begin{align*}
H^i(X_-, \Sym^2 \scrP^{\vee}(-k)) &\simeq H^i(X_+, \Sym^2 \scrU_3(k))
\end{align*}
for all $0 \leq k \leq 2$.
In addition, there is an exact triangle
\[ \Sym^2 \scrU_3(3) \to \Sym^2 \scrU_3(3) \otimes Rp_*\mcO(3E) \to \Sym^2 \scrU_3(-2)|_{\G_+}[-2]. \]
Thus, in order to show the vanishing of $H^{\geq 1}(X_-, \Sym^2 \scrP^{\vee} (-k))$, it is enough to prove the following two statements.
\begin{enumerate}
\item[(1)]  $H^{i}(X_+, (\Sym^2 \scrU_3)(k)) = 0$ for all $i \geq 1$ and $k \in \{0,1,2,3\}$.
\item[(2)] The bundle $\Sym^2 \scrU_3(-2)|_{\G_+} \simeq \Sym^2 \mcU_3(-2)$ over $\G_+ \simeq \Gr(3,5)$ is acyclic.
\end{enumerate}
Note that, the vanishings in (1) have already been addressed in Theorem~\ref{lem:tilting_plus_side}.
For (2), note that
\[ H^j(\Gr(3,5), \Sym^2 \mcU_3(-2)) \simeq \Ext_{\Gr(3,5)}^{6-j}(\mcO(2), \Sym^2 \mcU_3^{\vee}(-1))^* \]
by Serre duality, and this extension is indeed zero by the semi-orthogonality of the collection in Proposition~\ref{prop our FEC}~(2).
\end{proof}

\begin{prop} \label{PvP}
$\Ext^i_{X_-}(\scrP^{\vee}, \scrP(k)) = 0$ for all $k \geq 0$ and $i > 0$.
\end{prop}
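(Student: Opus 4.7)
Since $\scrP^{\vee}$ is locally free, we have $\RlcHom_{X_-}(\scrP^{\vee}, \scrP(k)) \simeq \scrP \otimes \scrP(k)$, and hence
\[ \Ext^i_{X_-}(\scrP^{\vee}, \scrP(k)) \simeq H^i(X_-, \scrP \otimes \scrP(k)). \]
The plan is to split this via the standard decomposition
\[ \scrP \otimes \scrP \simeq \Sym^2 \scrP \oplus \wedge^2 \scrP, \]
so that it suffices to prove the vanishing of higher cohomology of each summand twisted by $\mcO(k)$.

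For the symmetric part, the vanishing of $H^i(X_-, \Sym^2\scrP(k))$ for $k \geq 0$ and $i>0$ is exactly Proposition~\ref{sym2P}, so nothing further is required here.

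For the exterior part, the key observation is that since $\scrP$ has rank three, there is a canonical isomorphism $\wedge^2 \scrP \simeq (\det \scrP) \otimes \scrP^{\vee}$. Computing $\det \scrP$ from the defining sequence
\[ 0 \to \mcO(-2) \to \scrP \to \scrU_2^{\vee} \to 0 \]
gives $\det \scrP \simeq \mcO(-2) \otimes \det \scrU_2^{\vee} \simeq \mcO(-1)$, since $\det \scrU_2^{\vee} \simeq \mcO(1)$. Therefore $\wedge^2 \scrP \simeq \scrP^{\vee}(-1)$, and
\[ H^i(X_-, \wedge^2 \scrP(k)) \simeq H^i(X_-, \scrP^{\vee}(k-1)). \]
For $k \geq 0$ the twist $k-1$ satisfies $-(k-1) \leq 1 \leq 4$, so Proposition~\ref{Pv} yields the desired vanishing.

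Combining the two pieces gives $\Ext^i_{X_-}(\scrP^{\vee}, \scrP(k)) = 0$ for $k \geq 0$ and $i > 0$. There is essentially no obstacle here beyond noticing the isomorphism $\wedge^2 \scrP \simeq \scrP^{\vee}(-1)$, which reduces everything to the two previously established vanishings.
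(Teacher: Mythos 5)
Your proof is correct and follows essentially the same route as the paper: decompose $\scrP\otimes\scrP(k)$ as $\Sym^2\scrP(k)\oplus\wedge^2\scrP(k)$ and invoke Propositions~\ref{sym2P} and~\ref{Pv}. Your identification $\wedge^2\scrP\simeq\scrP^{\vee}(-1)$ via $\det\scrP\simeq\mcO(-1)$ is in fact slightly more careful than the paper's displayed decomposition (which records the summand as $\scrP^{\vee}(k)$ rather than $\scrP^{\vee}(k-1)$), but either twist lies in the range covered by Proposition~\ref{Pv}, so the conclusion is unaffected.
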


\begin{proof}
Since 
\[ \Ext^i_{X_-}(\scrP^{\vee}, \scrP(k)) \simeq H^i(X_-, \scrP \otimes \scrP(k)) 
\simeq H^i\left(X_-, \Sym^2 \scrP(k) \oplus \scrP^{\vee}(k) \right), \]
the vanishing follows from Proposition~\ref{sym2P} and \ref{Pv}.
\end{proof}

\begin{prop}
$\Ext^i_{X_-}(\scrP, \scrP^{\vee}(-a)) = 0$ for all $a \in \{0,1,2,3\}$ and $i > 0$.
\end{prop}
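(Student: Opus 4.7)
The approach is the exact dual of the previous Proposition~\ref{PvP}. First I would rewrite
\[ \Ext^i_{X_-}(\scrP, \scrP^{\vee}(-a)) \simeq H^i(X_-, \scrP^{\vee} \otimes \scrP^{\vee}(-a)), \]
and split the tensor product into symmetric and alternating parts:
\[ \scrP^{\vee} \otimes \scrP^{\vee} \simeq \Sym^2 \scrP^{\vee} \oplus \wedge^2 \scrP^{\vee}. \]

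The key computation is to identify $\wedge^2 \scrP^{\vee}$ with a twist of $\scrP$. Since $\scrP$ has rank $3$, the standard isomorphism $\wedge^2 \scrP^{\vee} \simeq \scrP \otimes \det \scrP^{\vee}$ reduces this to computing the determinant of $\scrP$. From the defining sequence~\eqref{eq: def P}, $0 \to \mcO(-2) \to \scrP \to \scrU_2^{\vee} \to 0$, and the fact that $\det \scrU_2^{\vee} \simeq \mcO(1)$ on $\Gr(2,5)$, I get $\det \scrP \simeq \mcO(-1)$, hence $\wedge^2 \scrP^{\vee} \simeq \scrP(1)$. Consequently
\[ H^i(X_-, \scrP^{\vee} \otimes \scrP^{\vee}(-a)) \simeq H^i(X_-, \Sym^2 \scrP^{\vee}(-a)) \oplus H^i(X_-, \scrP(1-a)). \]

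Finally I would apply the already-established vanishing results to each summand. For $a \in \{0,1,2,3\}$, the first summand vanishes for $i > 0$ by Proposition~\ref{sym2}, whose range $k \in \{0,1,2,3\}$ is exactly what is required. For the second summand, $1-a \in \{1, 0, -1, -2\}$, and each of these values is $\geq -2$, so the vanishing follows from Proposition~\ref{P}. Note that the required ranges match precisely, so there is no room to spare — the bound $a \leq 3$ is forced both by the $\Sym^2$-vanishing range and by the line-bundle range needed to invoke Proposition~\ref{P} on $\scrP(1-a)$.

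There is no genuine obstacle here: the calculation reduces entirely to two lemmas already proved in~\S\ref{sect: easy ext} and~\S\ref{sect: prob ext}. The only subtlety is the determinant bookkeeping that converts $\wedge^2 \scrP^{\vee}$ into $\scrP(1)$; this is the step that dictates why the arithmetic of twists lines up with the hypotheses of Propositions~\ref{sym2} and~\ref{P}.
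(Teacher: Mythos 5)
Your proof is correct and follows the same route as the paper: decompose $\scrP^{\vee}\otimes\scrP^{\vee}$ into $\Sym^2\scrP^{\vee}\oplus\wedge^2\scrP^{\vee}$ and quote Propositions~\ref{sym2} and~\ref{P}. In fact your determinant bookkeeping is the more careful of the two: since $\det\scrP\simeq\mcO(-1)$ one indeed has $\wedge^2\scrP^{\vee}\simeq\scrP(1)$, so the second summand is $\scrP(1-a)$ with $1-a\geq-2$, squarely within the range of Proposition~\ref{P}; the paper's displayed formula writes $\scrP(-a)$ instead, which for $a=3$ would fall outside the stated range of that proposition, so your version is the one that closes the argument cleanly.
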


\begin{proof}
Since 
\[ \Ext^i_{X_-}(\scrP, \scrP^{\vee}(-a)) \simeq H^i(X_-, \scrP^{\vee} \otimes \scrP^{\vee}(-a)) 
\simeq H^i\left(X_-, \Sym^2 \scrP^{\vee}(-a) \oplus \scrP(-a) \right), \]
the vanishing follows from Proposition~\ref{P} and \ref{sym2}.
\end{proof}

\begin{prop} \label{prop Pv and Pv}
$\Ext_{X_-}^i(\scrP^{\vee}, \scrP^{\vee}(-a)) \simeq \Ext_{X_-}^i(\scrP, \scrP(-a)) = 0$ for all $a \leq 2$ and $i>0$.
\end{prop}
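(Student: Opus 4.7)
The plan is to reduce both Ext groups to a common cohomology computation and then apply d\'evissage using the defining sequence for $\scrP$. For any vector bundle $\scrE$, one has $\Ext^i(\scrE, \scrE(-a)) \simeq H^i(\scrE \otimes \scrE^\vee(-a)) \simeq \Ext^i(\scrE^\vee, \scrE^\vee(-a))$, and applying this with $\scrE = \scrP$ establishes the isomorphism asserted in the statement. Hence it suffices to prove that $H^i(X_-, \scrP \otimes \scrP^\vee(-a)) = 0$ for $a \in \{0, 1, 2\}$ and $i > 0$.

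To achieve this, I would first tensor the sequence (\ref{eq: def P}) by $\scrP^\vee(-a)$ to obtain
\[ 0 \to \scrP^\vee(-a-2) \to \scrP \otimes \scrP^\vee(-a) \to \scrU_2^\vee \otimes \scrP^\vee(-a) \to 0. \]
The first term has vanishing higher cohomology by Proposition~\ref{Pv} precisely when $-a-2 \geq -4$, which is where the hypothesis $a \leq 2$ first enters. For the third term, I would dualize (\ref{eq: def P}) to $0 \to \scrU_2 \to \scrP^\vee \to \mcO(2) \to 0$ and tensor by $\scrU_2^\vee(-a)$ to get
\[ 0 \to \scrU_2 \otimes \scrU_2^\vee(-a) \to \scrU_2^\vee \otimes \scrP^\vee(-a) \to \scrU_2^\vee(2-a) \to 0. \]
Here $\scrU_2^\vee(2-a)$ is pulled back from $\Gr(2,5)$ and globally generated for $a \leq 2$, so Lemma~\ref{lem:globally_generated} kills its higher cohomology on $X_-$.

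For the remaining term $\scrU_2 \otimes \scrU_2^\vee(-a)$, I would invoke the standard rank two splitting $\scrU_2 \otimes \scrU_2^\vee \simeq \mcO \oplus \Sym^2 \scrU_2^\vee(-1)$ (the trace/traceless decomposition combined with the rank two identification $\mathfrak{sl}(\scrU_2) \simeq \Sym^2 \scrU_2 \otimes (\det \scrU_2)^{-1}$), which reduces the task to showing that $\mcO(-a)$ and $\Sym^2 \scrU_2^\vee(-1-a)$ have no higher cohomology. The former is immediate from Proposition~\ref{prop:easy_vanishings_line_bundles}. For the latter, the highest weight on $\Gr(2,5)$ is $(1-a, -1-a \mid -1-a, -1-a, -1-a)$, and shifting by $\rho = (4, 3, 2, 1, 0)$ yields the strictly decreasing tuple $(5-a, 2-a, 1-a, -a, -1-a)$ for each $a \in \{0, 1, 2\}$; hence the Borel--Weil--Bott theorem places the cohomology in degree zero with a non-trivial global section, and Lemma~\ref{lem:globally_generated} concludes.

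Each step is elementary; the only real subtlety is confirming that the range $a \leq 2$ is exactly the one for which every appeal to earlier vanishing results succeeds, with Proposition~\ref{Pv} providing the binding constraint at $a = 2$.
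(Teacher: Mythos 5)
Your reduction of the stated isomorphism to $H^i(X_-,\scrP\otimes\scrP^\vee(-a))$ and the first two steps of the d\'evissage are fine, but the final step contains a genuine error that breaks the proof exactly at $a=2$. The weight you assign to $\Sym^2\scrU_2^\vee(-1-a)$ is mis-formed: the twist $\mcO(-1-a)$ is already absorbed into the $\mcU_2^\vee$-part as $\bS^{(1-a,-1-a)}\mcU_2^\vee$, so the $\mcQ_3$-part must be trivial; by writing $(1-a,-1-a\mid -1-a,-1-a,-1-a)$ you have counted the twist twice, and your "strictly decreasing tuple" is in effect the Borel--Weil--Bott computation for the untwisted bundle $\Sym^2\mcU_2^\vee$. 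The correct weight $(1-a,-1-a,0,0,0)$ shifted by $\rho$ gives $(5-a,2-a,2,1,0)$, which has a repeated entry for each $a\in\{0,1,2\}$: the bundle is acyclic on $\Gr(2,5)$ and in particular has \emph{no} global sections (for $a=0$ it is $\ad\mcU_2$, and $\mcU_2$ is simple), so Lemma~\ref{lem:globally_generated} does not apply.

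One must therefore check every graded piece $\Sym^2\mcU_2^\vee(-1-a)\otimes\Sym^l\mcQ_3^\vee(2l)$ of $\pi_{-*}$ of the bundle. For $a\in\{0,1\}$ each piece happens to be either acyclic or concentrated in degree $0$, so your conclusion survives there. But for $a=2$ and $l=1$ the piece is $\Sym^2\mcU_2^\vee\otimes\mcQ_3^\vee(-1)$, with weight $(1,-1,1,0,0)$; adding $\rho$ gives $(5,2,3,1,0)$, which sorts to $(5,3,2,1,0)$ after one transposition, so
\[ H^1\bigl(X_-,\Sym^2\scrU_2^\vee(-3)\bigr)\supseteq H^1\bigl(\Gr(2,5),\Sym^2\mcU_2^\vee\otimes\mcQ_3^\vee(-1)\bigr)\simeq V^\vee\neq 0. \]
Thus a graded piece of your filtration genuinely has higher cohomology, and the long exact sequences no longer force the vanishing of $\Ext^{>0}(\scrP,\scrP(-2))$ (one would have to prove that a connecting map kills this $V^\vee$, which your argument does not do). This is precisely why the paper treats $a\in\{0,1,2\}$ by a different route: it resolves $q^*\scrP^\vee$ by $p^*\scrU_3$ on $\wX$ (Lemma~\ref{ex1}), transports the computation to $X_+$ where the pretilting property of $\scrT_+^{\spadesuit}$ is already known, and handles the error term supported on $E$ by Serre duality and the semi-orthogonality of the mutated collection of Proposition~\ref{prop our FEC}. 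A purely $X_-$-side d\'evissage of the kind you propose cannot close the case $a=2$. (A minor additional point: the statement covers all $a\leq 2$, so the negative values of $a$, which you dismiss in passing, also need the easy global-generation argument that the paper records separately.)
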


\begin{proof}
First, let us prove the case when $a < 0$.
Then Lemma~\ref{lem:globally_generated} shows that
\begin{enumerate}
\item[(i)] $\Ext_{X_-}^i(\scrU_2^{\vee}, \scrU_2^{\vee}(-a)) \simeq H^i(X_-, \Sym^2\scrU_2^{\vee}(-a-1) \oplus \mcO(-a)) = 0$ for all $i>0$, and
\item[(ii)] $\Ext_{X_-}^i(\mcO(-2), \scrU_2^{\vee}(-a)) \simeq H^i(X_-,\scrU_2^{\vee}(-a+2)) = 0$ for all $i > 0$.
\end{enumerate}
Thus, applying the functor $\RHom_{X_-}(-, \scrU_2^{\vee}(-a))$ to the sequence (\ref{eq: def P})
and then using (i) and (ii) shows that
\begin{enumerate}
\item[(iii)] $\Ext_{X_-}^i(\scrP, \scrU_2^{\vee}(-a)) = 0$ for all $i > 0$.
\end{enumerate}
Again, applying the functor $\RHom(\scrP, - \otimes \mcO(-a))$ to (\ref{eq: def P}) gives an exact sequence
\[ H^i(X_-, \scrP^{\vee}(-a-2)) \to \Ext_{X_-}^i(\scrP, \scrP(-a)) \to \Ext_{X_-}^i(\scrP, \scrU_2^{\vee}(-a)). \]
For any $i > 0$, the first term of this sequence is zero by Proposition~\ref{Pv},
and the third term is zero by (iii).
Therefore the desired vanishing $\Ext_{X_-}^i(\scrP, \scrP(-a)) = 0$ holds for all $a < 0$ and $i>0$.

It remains to prove the cases when $a = 0, 1, 2$.
Apply the functor $\RHom_{\wX}(-,p^*\scrU_3(-aH_2))$ to the exact sequence in Lemma~\ref{ex1}
to obtain the following exact triangle. 
\begin{align*}
\RHom_{\wX}(\mcO_E(E+2H_2), p^*\scrU_3(-aH_2)) & \to \\\RHom_{\wX}(p^*\scrU_3,p^*\scrU_3(-aH_2)) & \to \\ \RHom_{\wX}(q^*\scrP^{\vee},p^*\scrU_3(-aH_2)) &
\end{align*}
Then the third term of this triangle can be computed as
\[ \RHom(q^*\scrP^{\vee},p^*\scrU_3(-aH_2)) \simeq \RHom(\scrP^{\vee},Rq_*p^*\scrU_3(-aH_2)) \simeq \RHom(\scrP^{\vee},\scrP^{\vee}(-a)), \]
and this is what we want to compute.
The cohomology of the first term can be computed as
\begin{align*}
&\Ext_{\wX}^i( \mcO_E(E+2H_2),p^*\scrU_3(-aH_2)) \\
{}\simeq{} &\Ext_{\wX}^{9-i}(p^*\scrU_3(-aH_2), \mcO_E(3E+2H_2))^* &\text{(Serre duality)} \\
{}\simeq{} &H^{9-i}\left(\Fl(2,3;5), \mcU_3^{\vee}((1-a)E+(-2-a)H_3)\right)^* & \\
{}\simeq{} &H^{9-i}\left(\Gr(3,5), \mcU_3^{\vee} \otimes \Sym^{a-1} \mcU_3 \otimes \mcO(a-4) \right)^*.
\end{align*}
This is clearly zero if $a = 0$.
In addition, this cohomology is isomorphic to $\Ext^{9-i}_{\Gr(3,5)}(\mcO(3),\mcU_3^{\vee})$ if $a = 1$,
and to $\Ext^{9-i}_{\Gr(3,5)}(\mcU_3^{\vee}(1),\mcU_3^{\vee}(-1))$ if $a = 2$.
These are zero for all $i$ by the semi-orthogonality of the collection in Proposition~\ref{prop our FEC}.
Thus the first term $\RHom( \mcO_E(E+2H_2),p^*\mcU_3(-aH_2))$ is zero.

%

Finally, the second term is 
\[ \RHom_{\wX}(p^*\scrU_3,p^*\scrU_3(-aH_2)) \simeq \RHom_{X_+}(\scrU_3, \scrU_3(a) \otimes Rp_*\mcO(aE)) \simeq \RHom_{X_+}(\scrU_3^{\vee}, \scrU_3^{\vee}(a)), \]
which has no cohomology in non-zero degrees for all $a \in \{0,1,2\}$ 
due to the pretilting property of the bundle $\scrT_+^{\spadesuit}$ proved in Theorem~\ref{lem:tilting_plus_side}.

Therefore the desired vanishing $\Ext^i(\scrP^{\vee}, \scrP^{\vee}(-a)) = 0$ holds for all $a \in \{0,1,2\}$ and $i>0$.
\end{proof}

\begin{prop} \label{prop P sym2P}
$\Ext_{X_-}^i(\scrP, \Sym^2\scrP(a)) = 0$ for all $a \in \{0,1,2\}$ and $i>0$.
\end{prop}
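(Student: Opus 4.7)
The plan is to apply $\RHom_{X_-}(\scrP, -(a))$ to the short exact sequence
\[ 0 \longrightarrow \scrP(-2) \longrightarrow \Sym^2\scrP \longrightarrow \Sym^2\scrU_2^\vee \longrightarrow 0, \]
which is obtained by taking the second symmetric power of the defining sequence (\ref{eq: def P}) of $\scrP$. The resulting exact triangle reduces the claim to two separate vanishings for $i > 0$: first, $\Ext^i_{X_-}(\scrP, \scrP(a-2)) = 0$, and second, $\Ext^i_{X_-}(\scrP, \Sym^2\scrU_2^\vee(a)) = 0$. The first vanishing is immediate from Proposition~\ref{prop Pv and Pv} because $a - 2 \in \{-2,-1,0\}$.

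For the second, I would apply $\RHom_{X_-}(-, \Sym^2\scrU_2^\vee(a))$ to (\ref{eq: def P}), reducing further to the cohomology vanishings $H^{\geq 1}(X_-, \scrU_2 \otimes \Sym^2\scrU_2^\vee(a)) = 0$ and $H^{\geq 1}(X_-, \Sym^2\scrU_2^\vee(a+2)) = 0$. The latter is immediate from Lemma~\ref{lem:globally_generated}, since $\Sym^2\scrU_2^\vee(a+2)$ is globally generated on $\Gr(2,5)$ for $a+2 \geq 0$. For the former, I would use the $\GL(2)$-decomposition
\[ \scrU_2 \otimes \Sym^2\scrU_2^\vee \simeq \Sym^3\scrU_2^\vee(-1) \oplus \scrU_2^\vee \]
(via $\scrU_2 \simeq \scrU_2^\vee(-1)$ and Pieri), which further reduces to acyclicity of $\scrU_2^\vee(a)$ and $\Sym^3\scrU_2^\vee(a-1)$ on $X_-$. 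Both of these are pullbacks of globally generated bundles on $\Gr(2,5)$ except possibly $\Sym^3\scrU_2^\vee(-1)$ arising for $a=0$, so Lemma~\ref{lem:globally_generated} handles every case but this one.

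The main obstacle is this residual case: $\Sym^3\scrU_2^\vee(-1)$ is not globally generated on $\Gr(2,5)$, so Lemma~\ref{lem:globally_generated} does not directly apply. The resolution is an explicit Borel--Weil--Bott computation: pushing forward along $\pi_- \colon X_- \to \Gr(2,5)$ and expanding $\pi_{-*}\mcO_{X_-} \simeq \bigoplus_{k \geq 0} \Sym^k\mcQ_3^\vee(2k)$, one checks term-by-term that the $\rho$-shifted weight of each irreducible summand is either singular or strictly dominant, so all higher cohomology vanishes. This is a routine computation of the type automated in Appendix~\ref{app:code}.
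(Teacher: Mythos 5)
Your proposal is correct and follows essentially the same route as the paper: the same reduction via the symmetric square of (\ref{eq: def P}) to $\Ext^i(\scrP,\scrP(a-2))$ (handled by Proposition~\ref{prop Pv and Pv}) and $\Ext^i(\scrP,\Sym^2\scrU_2^\vee(a))$, then the same second reduction via (\ref{eq: def P}) to $H^{>0}(X_-,\Sym^2\scrU_2^\vee(a+2))$ and $H^{>0}(X_-,\scrU_2\otimes\Sym^2\scrU_2^\vee(a))$. The only difference is that you carry out the Littlewood--Richardson decomposition by hand and isolate the single non-globally-generated summand $\Sym^3\scrU_2^\vee(-1)$ for an explicit Borel--Weil--Bott check, whereas the paper delegates that last step to the script in \S\ref{code:prop P sym2P}.
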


\begin{proof}
Taking the second symmetric product of the exact sequence 
$0 \to \mcO(-2) \to \scrP \to \scrU_2^{\vee} \to 0$ gives another sequence
\[ 0 \to \scrP(-2) \to \Sym^2 \scrP \to \Sym^2 \scrU_2^{\vee} \to 0. \]
Let $a \in \{0,1,2\}$.
Applying $\Ext_{X_-}^i(\scrP(-a), - )$ to the above sequence gives an exact sequence
\[ \Ext_{X_-}^i(\scrP, \scrP(a-2)) \to \Ext_{X_-}^i(\scrP, \Sym^2 \scrP(a)) \to \Ext_{X_-}^i(\scrP, \Sym^2 \scrU_2^{\vee}(a)). \]
Proposition~\ref{prop Pv and Pv} proves that the first term is zero for all $i>0$.
The third term fits in an exact sequence
\[ \Ext_{X_-}^i(\scrU_2^{\vee}, \Sym^2 \scrU_2^{\vee}(a)) \to \Ext_{X_-}^i(\scrP, \Sym^2 \scrU_2^{\vee}(a)) \to H^i(X_-, \Sym^2 \scrU_2^{\vee}(a+2)). \]
The rightmost term has no higher cohomology by Lemma \ref{lem:globally_generated}, 
while the cohomology of the leftmost one can be computed by the Littlewood-Richardson rule and the Borel--Weil--Bott theorem
(see \S\ref{code:prop P sym2P} for an example of a script for this computation).
\end{proof}

\begin{prop}
$\Ext_{X_-}^i(\scrP^{\vee}, \Sym^2\scrP(a)) = 0$ for all $a \in \{2,3\}$ and $i>0$.
\end{prop}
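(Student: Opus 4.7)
The plan is to imitate the argument of Proposition~\ref{prop P sym2P} with $\scrP^\vee$ in place of $\scrP$. First I would reuse the short exact sequence
\[ 0 \to \scrP(-2) \to \Sym^2\scrP \to \Sym^2\scrU_2^\vee \to 0 \]
from the proof of Proposition~\ref{prop P sym2P}, twist by $\mcO(a)$, and apply $\RHom_{X_-}(\scrP^\vee, -)$ to obtain an exact sequence
\[ \Ext_{X_-}^i(\scrP^\vee, \scrP(a-2)) \to \Ext_{X_-}^i(\scrP^\vee, \Sym^2\scrP(a)) \to \Ext_{X_-}^i(\scrP^\vee, \Sym^2\scrU_2^\vee(a)). \]
For $a \in \{2,3\}$ one has $a-2 \geq 0$, so the leftmost term vanishes for $i>0$ by Proposition~\ref{PvP}. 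The task therefore reduces to the vanishing of the rightmost term.

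To handle the rightmost term I would dualize (\ref{eq: def P}) to obtain $0 \to \scrU_2 \to \scrP^\vee \to \mcO(2) \to 0$, and apply $\RHom_{X_-}(-, \Sym^2\scrU_2^\vee(a))$, producing
\[ \Ext_{X_-}^i(\mcO(2), \Sym^2\scrU_2^\vee(a)) \to \Ext_{X_-}^i(\scrP^\vee, \Sym^2\scrU_2^\vee(a)) \to \Ext_{X_-}^i(\scrU_2, \Sym^2\scrU_2^\vee(a)). \]
The first term equals $H^i(X_-, \Sym^2\scrU_2^\vee(a-2))$, which vanishes for $i>0$ and $a \geq 2$ by Lemma~\ref{lem:globally_generated}, since $\Sym^2\mcU_2^\vee(a-2)$ is a globally generated irreducible homogeneous bundle on $\Gr(2,5)$. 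The third term equals $H^i(X_-, \Sym^2\scrU_2^\vee(a) \otimes \scrU_2^\vee)$, and the Pieri decomposition
\[ \Sym^2\scrU_2^\vee \otimes \scrU_2^\vee \simeq \Sym^3\scrU_2^\vee \oplus \mathbb{S}^{2,1}\scrU_2^\vee, \]
together with the rank-two identity $\mathbb{S}^{2,1}\scrU_2^\vee \simeq \scrU_2^\vee(1)$, identifies it with $H^i(X_-, \Sym^3\scrU_2^\vee(a) \oplus \scrU_2^\vee(a+1))$. Both summands are pullbacks of globally generated irreducible homogeneous bundles on $\Gr(2,5)$ for $a \geq 0$, so another application of Lemma~\ref{lem:globally_generated} finishes the proof.

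I do not anticipate a serious obstacle: the structure is parallel to Proposition~\ref{prop P sym2P}, with the only changes being the use of the dual sequence $0 \to \scrU_2 \to \scrP^\vee \to \mcO(2) \to 0$ instead of (\ref{eq: def P}) in the second reduction step, and the appeal to Proposition~\ref{PvP} rather than Proposition~\ref{prop Pv and Pv} in the first step. Both bounds $a \geq 2$ arise naturally from the two reduction steps and exactly match the required range.
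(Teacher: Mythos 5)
Your proposal is correct and follows essentially the same route as the paper: the same two reduction steps via the sequences $0 \to \scrP(-2) \to \Sym^2\scrP \to \Sym^2\scrU_2^\vee \to 0$ and $0 \to \scrU_2 \to \scrP^\vee \to \mcO(2) \to 0$, with the first term handled by Proposition~\ref{PvP} and the remaining terms by Lemma~\ref{lem:globally_generated} after the same Pieri decomposition. No gaps.
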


\begin{proof}
Applying the functor $\RHom(\scrP^{\vee}(-a),-)$ to the sequence
\[ 0 \to \scrP(-2) \to \Sym^2 \scrP \to \Sym^2 \scrU_2^{\vee} \to 0 \]
gives an exact triangle
\[ \RHom(\scrP^{\vee},\scrP(-2+a)) \to \RHom(\scrP^{\vee},\Sym^2 \scrP(a)) \to \RHom(\scrP^{\vee},\Sym^2 \scrU_2^{\vee}(a)). \]
The first term has no higher cohomology by Proposition~\ref{PvP}.
Thus it is enough to show that the third term has no cohomology.
Applying the functor $\RHom(-, \Sym^2\scrU_2^{\vee}(a))$ to an exact sequence
\[ 0 \to \scrU_2 \to \scrP^{\vee} \to \mcO(2) \to 0 \]
gives an exact triangle
\[ \RG(X_-, \Sym^2\scrU_2^{\vee}(a-2)) \to \RHom(\scrP^{\vee},\Sym^2 \scrU_2^{\vee}(a)) \to \RHom(\scrU_2,\Sym^2 \scrU_2^{\vee}(a)). \]
The left term has no higher cohomology by Lemma \ref{lem:globally_generated}.
The right term is
\[ \RHom(\scrU_2,\Sym^2 \scrU_2^{\vee}(a)) \simeq \RG(X_-, \Sym^3\scrU_2^{\vee}(a) \oplus \scrU_2^{\vee}(a+1)), \]
and this has no higher cohomology for the same reason.
\end{proof}

\begin{prop} \label{Sym2P&P}
$\Ext^i(\Sym^2\scrP, \scrP(-a)) = 0$ for all $a \in \{0,1,2\}$ and $i>0$.
\end{prop}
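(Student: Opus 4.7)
The plan is to mimic the strategy of Proposition~\ref{prop P sym2P}, but with the two arguments of $\Ext$ swapped. The starting point is the short exact sequence
\[ 0 \to \scrP(-2) \to \Sym^2\scrP \to \Sym^2\scrU_2^\vee \to 0 \]
obtained by taking the second symmetric power of~\eqref{eq: def P}. Applying $\RHom_{X_-}(-, \scrP(-a))$ yields an exact triangle
\[ \RHom(\Sym^2\scrU_2^\vee, \scrP(-a)) \to \RHom(\Sym^2\scrP, \scrP(-a)) \to \RHom(\scrP, \scrP(2-a)). \]
For $a \in \{0,1,2\}$ one has $2-a \geq 0 \geq -2$, so the rightmost term has no higher cohomology by Proposition~\ref{prop Pv and Pv}. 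Hence it suffices to establish the vanishing of the leftmost term.

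For this, apply $\RHom_{X_-}(\Sym^2\scrU_2^\vee, -)$ to the twisted defining sequence
\[ 0 \to \mcO(-2-a) \to \scrP(-a) \to \scrU_2^\vee(-a) \to 0 \]
of $\scrP(-a)$. This reduces the problem to the vanishing of the higher cohomology of the two bundles $\Sym^2\scrU_2(-2-a)$ and $\Sym^2\scrU_2 \otimes \scrU_2^\vee(-a)$, both of which are pullbacks from $\Gr(2,5)$. Using the projection formula together with
\[ R\pi_{-*}\mcO_{X_-} \simeq \bigoplus_{l \geq 0} \Sym^l\bigl(\mcQ_3^\vee(2)\bigr), \]
each of these $\RG$-computations becomes an infinite direct sum of cohomology groups of irreducible homogeneous bundles on $\Gr(2,5)$, obtained after expanding via the Littlewood--Richardson rule.

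For all sufficiently large $l$ the resulting summands are globally generated on $\Gr(2,5)$ (since the overall $\mcO(1)$-twist grows like $2l$ while the other factors contribute bounded shifts), so by Lemma~\ref{lem:globally_generated} they are acyclic and contribute nothing. Consequently only finitely many values of $l$ remain to be checked, and these are handled by the Borel--Weil--Bott theorem, either by hand or by a script of the type collected in Appendix~\ref{app:code}, following the same pattern as in Propositions~\ref{prop:easy_vanishings_line_bundles}, \ref{P}, \ref{sym2P}, and~\ref{prop P sym2P}.

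The main obstacle is simply the bookkeeping: one must identify the finitely many irreducible summands $\bS^\lambda \mcU_2^\vee \otimes \bS^\mu \mcQ_3^\vee$ that arise for small $l$ and verify that each lies in the Borel--Weil--Bott singular chamber or in the acyclic range. I expect no genuine difficulty here, because the argument closely parallels the previous propositions in this section, and indeed the computations are of the very same flavour as those already reduced to the appendix scripts.
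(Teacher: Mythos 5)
Your opening reduction is sound: applying $\RHom_{X_-}(-,\scrP(-a))$ to $0 \to \scrP(-2) \to \Sym^2\scrP \to \Sym^2\scrU_2^\vee \to 0$ and disposing of $\RHom(\scrP,\scrP(2-a))$ by Proposition~\ref{prop Pv and Pv} correctly reduces the claim to $\Ext^{>0}_{X_-}(\Sym^2\scrU_2^\vee,\scrP(-a)) = 0$. The gap is in the next step: the two auxiliary bundles you reduce to do \emph{not} have vanishing higher cohomology on $X_-$, so the Borel--Weil--Bott check you defer as routine does not come out clean. Concretely, take $a=0$ and the first bundle $\Sym^2\scrU_2(-2)$. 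Its pushforward to $\Gr(2,5)$ is $\bigoplus_{l\ge 0}\Sym^2\mcU_2(-2)\otimes\Sym^l\bigl(\mcQ_3^\vee(2)\bigr)$, and the $l=2$ summand is $\Sym^2\mcU_2\otimes\Sym^2\mcQ_3^\vee(2)\simeq \bS^{(2,2,0)}\mcQ_3\otimes\bS^{(2,0)}\mcU_2$, whose Bott weight $(2,2,0,2,0)+\rho=(6,5,2,3,0)$ is regular and becomes dominant after exactly one transposition; hence $H^1(\Gr(2,5),\Sym^2\mcU_2\otimes\Sym^2\mcQ_3^\vee(2))\simeq \bS^{(2,2,1,1,0)}V\neq 0$ and so $\Ext^1_{X_-}(\Sym^2\scrU_2^\vee,\mcO(-2))\neq 0$. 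Similar failures occur elsewhere: for $a=2$ the $l=1$ summand gives $H^4(X_-,\Sym^2\scrU_2(-4))\neq 0$, and your second bundle contains $\scrU_2(-a)\simeq\scrU_2^\vee(-1-a)$ as a direct summand, which for $a=2$ has nonzero $H^1$. With these terms alive, the long exact sequences leave $\Ext^1(\Sym^2\scrU_2^\vee,\scrP(-a))$ undetermined, and the argument collapses.

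This is not a bookkeeping accident: $\scrP(-a)$ is twisted negatively enough that the pieces pulled back from $\Gr(2,5)$ acquire higher cohomology from the $\Sym^l(\mcQ_3^\vee(2))$ expansion, which is precisely why this proposition sits among the ``harder vanishings'' rather than with Proposition~\ref{prop P sym2P}. The paper's proof therefore takes a different route: it works on $\wX$, applies $\RHom(-,p^*\Sym^2\scrU_3(-aH_2))$ to the sequence $0\to q^*\scrP^\vee\to p^*\scrU_3\to\mcO_E(E+2H_2)\to 0$ of Lemma~\ref{ex1}, kills the $\mcO_E$-term by Serre duality on $\wX$ together with the semi-orthogonality of the collection in Proposition~\ref{prop our FEC}~(2), and identifies the remaining term with $\Ext^i_{X_+}(\Sym^2\scrU_3^\vee(-1),\scrU_3^\vee(a-1))$, which vanishes by the pretilting property of $\scrT_+^{\heartsuit}$ already proved on the $X_+$ side. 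Some such transfer across the flop (or an equivalent cancellation mechanism) appears genuinely necessary; a computation carried out purely on the $\Gr(2,5)$ side, as you propose, does not suffice.
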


\begin{proof}
Consider the exact sequence $0 \to q^*\scrP^{\vee} \to p^*\scrU_3 \to \mcO_E(E+2H_2) \to 0$ given in Lemma~\ref{ex1},
and apply the functor $\Ext^i(-,p^*\Sym^2\scrU_3(-aH_2))$.
Then the first term is
\begin{align*}
\Ext_{\wX}^i(\mcO_E(E+2H_2),p^*\Sym^2\scrU_3(-aH_2)) &\simeq  \Ext^{9-i}(p^*\Sym^2\scrU_3(-aH_2), \mcO_E(3E+2H_2))^* \\
&\simeq H^{9-i}(\Fl(2,3;5), p^*\Sym^2\mcU_3^{\vee}((1-a)E-(a+2)H_3))^* \\
&\simeq H^{9-i}(\Gr(3,5), \Sym^2\mcU_3^{\vee} \otimes \Sym^{a-1}\mcU_3 \otimes \mcO(a-4))^*.
\end{align*}
If $a = 0$, then this is clearly zero.
In addition, the above cohomology is isomorphic to $\Ext^{9-i}_{\Gr(3,5)}(\mcU_3^{\vee}(1), \Sym^2\mcU_3^{\vee}(-1))$ if $a = 1$,
and to $\Ext^{9-i}_{\Gr(3,5)}(\mcO(2), \Sym^2\mcU_3^{\vee}(-1))$ if $a = 2$.
These extensions are zero for all $i \in \Z$, by the semi-orthogonality of the collection in Proposition~\ref{prop our FEC}~(2).

The second term is
\[ \Ext_{\wX}^i(p^*\scrU_3,p^*\Sym^2\scrU_3(-aH_2)) \simeq \Ext^i_{X_+}(\scrU_3, \Sym^2\scrU_3(a) \otimes Rp_*\mcO_{\wX}(aE)). \]
Since $Rp_*\mcO_{\wX}(aE) \simeq \mcO_{X_+}$ for all $a \in \{0,1,2\}$, this extension is isomorphic to 
\[ \Ext^i_{X_+}(\scrU_3, \Sym^2\scrU_3(a)) \simeq \Ext^i_{X_+}(\Sym^2\scrU_3^{\vee}(-1), \scrU_3^{\vee}(a-1)). \]
This extension is zero for all $a \in \{0,1,2\}$ and $i >0$
by the pretilting property of the bundle $\scrT_+^{\heartsuit}$ proved in Theorem~\ref{lem:tilting_plus_side}.


The third term can be computed as
\begin{align*}
\Ext_{\wX}^i(q^*\scrP^{\vee},p^*\Sym^2\scrU_3(-aH_2)) &\simeq \Ext_{\wX}^i(\scrP^{\vee},Rq_*p^*\Sym^2\scrU_3 \otimes \mcO(-a)) \\
&\simeq \Ext_{\wX}^i(\scrP^{\vee}, \Sym^2\scrP^{\vee}(-a))
\end{align*}
by Lemma~\ref{sym2 push}, which is what we want to compute.
Now the vanishings of the first and second terms show the vanishing of the third term.
\end{proof}

\begin{prop} \label{prop:sym2P-sym2P}
$\Ext^i(\Sym^2\scrP, \Sym^2 \scrP) = 0$ for all $i>0$.
\end{prop}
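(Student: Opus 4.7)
The plan is to reduce $\Ext_{X_-}^i(\Sym^2\scrP,\Sym^2\scrP)$ to extensions already shown to vanish in the previous propositions, particularly Proposition~\ref{Sym2P&P}, by peeling off $\scrU_2^\vee$-quotients using the second symmetric power of the defining sequence~(\ref{eq: def P}):
\[
0 \to \scrP(-2) \to \Sym^2\scrP \to \Sym^2\scrU_2^\vee \to 0.
\]

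First, I would apply $\RHom(\Sym^2\scrP,-)$ to this sequence to obtain the exact triangle
\[
\RHom(\Sym^2\scrP,\scrP(-2)) \to \RHom(\Sym^2\scrP,\Sym^2\scrP) \to \RHom(\Sym^2\scrP,\Sym^2\scrU_2^\vee).
\]
The first term has no higher cohomology by Proposition~\ref{Sym2P&P} applied with $a=2$, so the problem reduces to showing $\Ext_{X_-}^i(\Sym^2\scrP,\Sym^2\scrU_2^\vee)=0$ for $i>0$. Applying $\RHom(-,\Sym^2\scrU_2^\vee)$ to the same sequence yields
\[
\RHom(\Sym^2\scrU_2^\vee,\Sym^2\scrU_2^\vee) \to \RHom(\Sym^2\scrP,\Sym^2\scrU_2^\vee) \to \RHom(\scrP(-2),\Sym^2\scrU_2^\vee),
\]
so it suffices to control the two outer terms.

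For the rightmost term $\RHom(\scrP,\Sym^2\scrU_2^\vee(2))$, I would split $0\to\mcO(-2)\to\scrP\to\scrU_2^\vee\to 0$, reducing to $\RG(X_-,\Sym^2\scrU_2^\vee(4))$ and $\RG(X_-,\scrU_2\otimes\Sym^2\scrU_2^\vee(2))$. Decomposing the latter via Littlewood-Richardson gives $\Sym^3\scrU_2^\vee(1)\oplus\scrU_2^\vee(2)$; together with $\Sym^2\scrU_2^\vee(4)$, all of these are pullbacks of globally generated bundles from $\Gr(2,5)$, so Lemma~\ref{lem:globally_generated} applies.

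The main work lies in the leftmost term, $\RHom(\Sym^2\scrU_2^\vee,\Sym^2\scrU_2^\vee)\simeq\RG(X_-,\Sym^2\scrU_2\otimes\Sym^2\scrU_2^\vee)$. By representation theory of $\GL(\scrU_2)$ (using $\scrU_2\simeq\scrU_2^\vee\otimes\det\scrU_2$),
\[
\Sym^2\scrU_2\otimes\Sym^2\scrU_2^\vee \simeq \Sym^4\scrU_2^\vee(-2)\oplus\Sym^2\scrU_2^\vee(-1)\oplus\mcO,
\]
and each summand is the pullback of an irreducible homogeneous bundle from $\Gr(2,5)$. The $\mcO$-summand is harmless since $\pi_{-*}\mcO_{X_-}=\bigoplus_{l\geq 0}\Sym^l\mcQ_3^\vee(2l)$ is built from globally generated bundles. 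For the other two summands the twists are negative on $\Gr(2,5)$, so Lemma~\ref{lem:globally_generated} does not apply directly; however, for both summands the $\rho$-shifted weights $(5,2,2,1,0)$ and $(6,1,2,1,0)$ are singular, so the corresponding bundles on $\Gr(2,5)$ are acyclic, while for $l\geq 1$ the twists $\mcO(2l-2)$ and $\mcO(2l-1)$ become non-negative, restoring global generation. This Borel-Weil-Bott verification is the only technically delicate step, and can be carried out by hand or via a script analogous to those in Appendix~\ref{app:code}.
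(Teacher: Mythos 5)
Your proposal follows essentially the same route as the paper: the same reduction via Proposition~\ref{Sym2P&P}, the same three graded pieces $\Sym^2\scrU_2^\vee(4)$, $\scrU_2\otimes\Sym^2\scrU_2^\vee(2)$ and $\Sym^2\scrU_2\otimes\Sym^2\scrU_2^\vee$ (the paper phrases the two triangles as a filtration of $\Sym^2\scrP^\vee\otimes\Sym^2\scrU_2^\vee$), and the same final Borel--Weil--Bott check on $\Sym^2\scrU_2\otimes\Sym^2\scrU_2^\vee$, which you carry out by hand via Clebsch--Gordan where the paper defers to a script. One small inaccuracy: for the summand $\Sym^4\scrU_2^\vee(-2)$ at $l=1$, the relevant bundle on $\Gr(2,5)$ is $\Sym^4\mcU_2^\vee\otimes\mcQ_3^\vee$, which is \emph{not} globally generated (the twist being non-negative does not help, since $\mcQ_3^\vee$ itself has no sections), so your "global generation is restored for $l\geq 1$" does not cover this case; it is nevertheless acyclic by Borel--Weil--Bott (the $\rho$-shifted weight has a repeated entry), so the argument is repaired by the same tool you already invoke for $l=0$.
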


\begin{proof}
Apply the functor $\Ext^i(\Sym^2\scrP, -)$ to an exact sequence
\[ 0 \to \scrP(-2) \to \Sym^2\scrP \to \Sym^2 \scrU_2^{\vee} \to 0. \]
Then the second term is what we want to compute,
and the first term is zero for all $i > 0$ by Proposition~\ref{Sym2P&P}.

Let us compute the third term 
\[ \Ext^i(\Sym^2\scrP, \Sym^2 \scrU_2^{\vee}) \simeq H^i(X_-, \Sym^2\scrP^\vee \otimes \Sym^2 \scrU_2^{\vee}). \] 
Consider the following two exact sequences
\begin{align*}
&0 \to \scrP(-2) \to \Sym^2\scrP \to \Sym^2 \scrU_2^{\vee} \to 0,~ \text{and} \\
&0 \to \scrU_2(-2) \to \scrP(-2) \to \mcO \to 0,
\end{align*}
which shows that the bundle $\Sym^2\scrP^\vee \otimes \Sym^2 \scrU_2^{\vee}$ can be filtered by
bundles 
\begin{center}
$\Sym^2\scrU_2^\vee(2)$, $\Sym^2\scrU_2^\vee \otimes \scrU_2^\vee(4)$ and 
$\Sym^2\scrU_2\otimes \Sym^2\scrU_2^\vee$.
\end{center}
Then, the bundles $\Sym^2\scrU_2^\vee(2)$ and $\Sym^2\scrU_2^\vee \otimes \scrU_2^\vee(4)$ have no higher cohomologies by Lemma \ref{lem:globally_generated}, 
while the vanishing of higher cohomologies of the third bundle $\Sym^2\scrU_2\otimes \Sym^2\scrU_2^\vee$ follows by the Littlewood-Richardson rule and the Borel--Weil--Bott theorem (see \S\ref{code:prop:sym2P-sym2P} for an example of a script that computes this cohomology).
Thus $ \Ext^i(\Sym^2\scrP, \Sym^2 \scrU_2^{\vee}) = 0$ for all $i > 0$.
\end{proof}

\begin{prop} \label{prop:sym2P-Pv}
$\Ext^i(\Sym^2 \scrP, \scrP^{\vee}(-a)) = 0$ for all $a \in \{2,3\}$ and $i>0$.
\end{prop}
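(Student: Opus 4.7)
The plan is to mirror the strategy used in Proposition~\ref{Sym2P&P} and Proposition~\ref{prop:sym2P-sym2P}, applying $\RHom(-,\scrP^{\vee}(-a))$ to the short exact sequence
\[ 0 \to \scrP(-2) \to \Sym^2 \scrP \to \Sym^2 \scrU_2^{\vee} \to 0 \]
obtained from the symmetric square of the defining sequence~\eqref{eq: def P} of $\scrP$. This yields an exact triangle
\[ \RHom(\Sym^2\scrU_2^{\vee}, \scrP^{\vee}(-a)) \to \RHom(\Sym^2\scrP, \scrP^{\vee}(-a)) \to \RHom(\scrP(-2), \scrP^{\vee}(-a)), \]
so it suffices to show that the first and third terms have no higher cohomology for $a \in \{2,3\}$.

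For the third term, I would rewrite $\Ext^i(\scrP(-2), \scrP^{\vee}(-a)) \simeq \Ext^i(\scrP, \scrP^{\vee}(2-a))$; since $a-2 \in \{0,1\}$, this vanishes in positive degrees by the preceding proposition on $\Ext^i_{X_-}(\scrP, \scrP^{\vee}(-k))$ with $k \in \{0,1,2,3\}$.

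For the first term, the plan is to further apply $\RHom(\Sym^2\scrU_2^{\vee},-)$ to the sequence $0 \to \scrU_2(-a) \to \scrP^{\vee}(-a) \to \mcO(2-a) \to 0$ arising from dualizing~\eqref{eq: def P}. This reduces the vanishing to
\[ \Ext^i(\Sym^2\scrU_2^{\vee}, \mcO(2-a)) \simeq H^i(X_-, \Sym^2\scrU_2^{\vee}(-a)) \]
and
\[ \Ext^i(\Sym^2\scrU_2^{\vee}, \scrU_2(-a)) \simeq H^i\bigl(X_-, \Sym^2\scrU_2^{\vee} \otimes \scrU_2^{\vee}(-3-a)\bigr), \]
using the identification $\scrU_2 \simeq \scrU_2^{\vee}(-1)$ on $\Gr(2,5)$. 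The Littlewood--Richardson rule decomposes $\Sym^2\scrU_2^{\vee}\otimes \scrU_2^{\vee} \simeq \Sym^3\scrU_2^{\vee} \oplus \scrU_2^{\vee}(1)$ (using $\mathbb{S}^{2,1}\scrU_2^{\vee} \simeq \scrU_2^{\vee}(1)$ for the rank-two bundle), so all that remains are cohomology computations of irreducible homogeneous bundles over $X_-$, which can be handled by the Borel--Weil--Bott theorem as in earlier propositions (with a computer-algebra script analogous to those in Appendix~\ref{app:code} serving as a practical verification).

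The main obstacle I anticipate is purely bookkeeping: ensuring that the twists $-3-a$ and $-a$ in the various summands land in ranges where Borel--Weil--Bott actually gives vanishing. Since $a \leq 3$ keeps the total negative twist moderate, these should lie in the Bott-acyclic range rather than in a range where some cohomology is forced to appear, paralleling the successful computations in Proposition~\ref{sym2} and Proposition~\ref{Pv}.
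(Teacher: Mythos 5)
Your handling of the third term is fine: $\Ext^i(\scrP(-2),\scrP^{\vee}(-a)) \simeq \Ext^i(\scrP,\scrP^{\vee}(-(a-2)))$ with $a-2\in\{0,1\}$ is indeed covered by Proposition~\ref{prop Pv and Pv}. The gap is in the first term. The terminal vanishings you reduce to are simply false. Since $\pi_{-*}\mcO_{X_-}\simeq\bigoplus_{l\geq 0}\Sym^l\mcQ_3^{\vee}(2l)$, Borel--Weil--Bott gives, for example,
\[
H^1(X_-,\scrU_2^{\vee}(-4))\supseteq H^1(\Gr(2,5),\mcU_2^{\vee}\otimes\Sym^2\mcQ_3^{\vee})\simeq \wedge^3V^{\vee},\qquad
H^1(X_-,\Sym^3\scrU_2^{\vee}(-5))\supseteq H^1(\Gr(2,5),\Sym^3\mcU_2^{\vee}\otimes\Sym^2\mcQ_3^{\vee}(-1))\simeq\bS^{(2,1,0,0,0)}V^{\vee},
\]
and $H^1(X_-,\Sym^2\scrU_2^{\vee}(-3))\supseteq H^1(\Gr(2,5),\Sym^2\mcU_2^{\vee}\otimes\mcQ_3^{\vee}(-1))\simeq V^{\vee}$. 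The first two nonvanishings sit inside your term $\Ext^1(\Sym^2\scrU_2^{\vee},\scrU_2(-2))=H^1(X_-,\Sym^3\scrU_2^{\vee}(-5)\oplus\scrU_2^{\vee}(-4))$ for $a=2$, and the third inside $\Ext^1(\Sym^2\scrU_2^{\vee},\mcO(-1))$ for $a=3$. So the twists are \emph{not} in the Bott-acyclic range, and your argument cannot close without proving surjectivity of the relevant connecting homomorphisms, which BWB computations alone do not control. (This is the same phenomenon that forces the paper to treat the cases $a\geq 2$ of Proposition~\ref{Pv} by a different method than the naive filtration of $\scrP^{\vee}$.)

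The paper's proof avoids working term-by-term on $X_-$ altogether: it applies $\RHom_{\wX}(-,p^*\Sym^2\scrU_3(-aH_2))$ to the sequence $0\to q^*\scrP\to p^*\scrU_3^{\vee}(E)\to q^*\scrU_2^{\vee}(E)|_E\to 0$ of Lemma~\ref{ex2}, identifies the desired group via $Rq_*p^*\Sym^2\scrU_3\simeq\Sym^2\scrP^{\vee}$ (Lemma~\ref{sym2 push}), transfers the middle term to $X_+$ where it becomes $\Ext^i_{X_+}(\Sym^2\scrU_3^{\vee}(-1),\scrU_3(a-1))$ --- exactly part of the pretilting property of $\scrT_+^{\heartsuit}$ --- and kills the term supported on $E$ by Serre duality together with the semi-orthogonality of the mutated collection of Proposition~\ref{prop our FEC}~(2). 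If you wish to stay on $X_-$, you would have to analyse the connecting maps explicitly, which is a substantially different and harder argument than the one you outline.
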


\begin{proof}
Apply the functor $\Ext^i_{\wX}(-, p^*\Sym^2 \scrU_3(-aH_2))$ to the exact sequence in Lemma~\ref{ex2}.
Then the third term is
\begin{align*}
\Ext^i_{\wX}(q^*\scrP, p^*\Sym^2 \scrU_3(-aH_2)) &\simeq \Ext^i_{X_-}(\scrP, Rq_*p^*\Sym^2 \scrU_3(-a)) \\
&\simeq \Ext^i_{X_-}(\scrP, \Sym^2 \scrP^{\vee}(-a)) \\
&\simeq \Ext^i_{X_-}(\Sym^2 \scrP, \scrP^{\vee}(-a))
\end{align*}
by Lemma~\ref{sym2 push}, which is what we want to compute.

The second term is 
\begin{align*}
\Ext^i_{\wX}(p^*\scrU_3^{\vee}(E), p^*\Sym^2 \scrU_3(-aH_2)) &\simeq \Ext^i_{\wX}(p^*\scrU_3^{\vee}, p^*\Sym^2 \scrU_3((a-1)E+aH_3)) \\
&\simeq \Ext^i_{X_+}(\scrU_3^{\vee}, \Sym^2 \scrU_3(a) \otimes Rp_*\mcO((a-1)E)) \\
&\simeq \Ext^i_{X_+}(\scrU_3^{\vee}, \Sym^2 \scrU_3(a)) \\
&\simeq \Ext^i_{X_+}(\Sym^2 \scrU_3^{\vee}(-1), \scrU_3(a-1))
\end{align*}
Here the third isomorphism follows since $Rp_*\mcO((a-1)E) \simeq \mcO_{X_+}$ for $a \in \{2,3\}$.
Now this extension is zero for all $i > 0$ by the pretilting property of the bundle $\scrT_+^{\heartsuit}$ proved in Theorem~\ref{lem:tilting_plus_side}.

Therefore it is enough to prove that the first term has no higher cohomologies.
It can be computed as
\begin{align*}
\Ext^i_{\wX}(q^*\scrU_2^{\vee}(E)|_E, p^*\Sym^2 \scrU_3(-aH_2)) &\simeq \Ext^{9-i}_{\wX}(p^*\Sym^2 \scrU_3(-aH_2), q^*\scrU_2^{\vee}(3E)|_E)^* \\
&\simeq H^{9-i}(\Fl(2,3;5), \Sym^2 \mcU^{\vee}_3 \otimes \mcU_2^{\vee} \otimes \mcO_E(3E+aH_2))^*.
\end{align*}
Consider an exact sequence
\[ 0 \to \mcO(H_3-H_2) \to \mcU_3^{\vee} \to \mcU_2^{\vee} \to 0 \]
on $E \simeq \Fl(2,3;5)$ and then twist this sequence by $\Sym^2 \mcU^{\vee}_3 \otimes \mcO_E(3E+aH_2)$.
Then the cohomology exact sequence is
\begin{align*} H^{j}(\Fl(2,3;5), \Sym^2 \mcU^{\vee}_3(2E+(a-2)H_2)) &\to H^{j}(\Fl(2,3;5), \Sym^2 \mcU^{\vee}_3 \otimes \mcU_3^{\vee} \otimes \mcO_E(3E+aH_2)) \\
&\to H^{j}(\Fl(2,3;5), \Sym^2 \mcU^{\vee}_3 \otimes \mcU_2^{\vee}\otimes \mcO_E(3E+aH_2)).
\end{align*}
The first term of this sequence can be computed as
\begin{align*}
H^{j}(\Fl(2,3;5), \Sym^2 \mcU^{\vee}_3(2E+(a-2)H_2)) &\simeq H^{j}(\Fl(2,3;5), \Sym^2 \mcU^{\vee}_3((4-a)E+(2-a)H_3))\\
&\simeq H^j(\Gr(3,5), \Sym^2 \mcU^{\vee}_3(2-a) \otimes Rp_*\mcO_E((4-a)E)).
\end{align*}
This is zero for all $j \geq 0$, since for $a \in \{2,3\}$, one has $4-a \in \{1,2\}$, and hence $Rp_*\mcO_E((4-a)E) = 0$.

The second term of the cohomology long exact sequence is
\begin{align*}
&H^{j}(\Fl(2,3;5), \Sym^2 \mcU^{\vee}_3 \otimes \mcU_3^{\vee} \otimes \mcO_E(3E+aH_2)) \\
{}\simeq{} &H^{j}(\Fl(2,3;5), \Sym^2 \mcU^{\vee}_3 \otimes \mcU_3^{\vee} \otimes \mcO_E((3-a)E-aH_3)) \\
{}\simeq{} &H^{j}(\Gr(3,5), \Sym^2 \mcU^{\vee}_3 \otimes \mcU_3^{\vee}(-a) \otimes Rp_*\mcO_E((3-a)E)).
\end{align*}
If $a = 2$, this is zero for all $j$ since $Rp_*\mcO_E(E) = 0$.
If $a=3$, this is 
\[ H^{j}(\Gr(3,5), \Sym^2 \mcU^{\vee}_3 \otimes \mcU_3^{\vee} \otimes \mcO(-3))
\simeq \Ext^j_{\Gr(3,5)}(\mcU_3(2), \Sym^2\mcU_3^{\vee}(-1)),
 \]
 which is zero for all $j$ by the semi-orthogonality of the collection in Proposition~\ref{prop our FEC}~(2).
\end{proof}

\subsection{Tilting bundles over $X_-$}

\begin{thm}
The bundle $\scrT_-^{\star}$ is a tilting bundle over $X_-$ for all $\star \in \{\spadesuit, \clubsuit, \heartsuit, \diamondsuit \}$.
\end{thm}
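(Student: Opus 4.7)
The strategy parallels the proof of Theorem~\ref{lem:tilting_plus_side}, relying on the cohomology calculations carried out in~\S\ref{sect: easy ext} and~\S\ref{sect: prob ext}. By Lemma~\ref{lem:dual tilting}, it suffices to establish the tilting property for $\star \in \{\spadesuit, \heartsuit\}$.

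For the pretilting property, my plan is to enumerate all ordered pairs $(A,B)$ of summands of $\scrT_-^{\star}$ and verify $\Ext^{>0}_{X_-}(A,B) = 0$ by invoking the appropriate result from~\S\ref{sect:minus-side-tilting}. Line-bundle pairs reduce to $H^{>0}(X_-,\mcO(k))$ with $k \geq -4$ and are handled by Proposition~\ref{prop:easy_vanishings_line_bundles}. Mixed pairs involving a line bundle together with one of $\scrP(\cdot)$ or $\scrP^{\vee}(\cdot)$ reduce to $H^{>0}$ of a suitable twist of $\scrP$ or $\scrP^{\vee}$, covered by Propositions~\ref{P} and~\ref{Pv}. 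Pairs $(\scrP,\scrP^\vee)$ fall under Proposition~\ref{PvP}, while $(\scrP,\scrP^\vee(-a))$-pairs are the unlabelled proposition immediately following it. The pure $\scrP$--$\scrP$ and $\scrP^\vee$--$\scrP^\vee$ pairs are Proposition~\ref{prop Pv and Pv}. For the $\heartsuit$-case, the extra summand $\Sym^2\scrP(1)$ creates extensions treated in Propositions~\ref{sym2P}, \ref{sym2}, \ref{prop P sym2P}, \ref{Sym2P&P}, \ref{prop:sym2P-sym2P}, and~\ref{prop:sym2P-Pv}. A direct check confirms that every twist shift that appears lies within the intervals covered by these propositions.

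For generation, I would invoke Proposition~\ref{generation criteria}. The morphism $f_- \colon X_- \to \Spec R$ is a projective crepant resolution, since $X_-$ is smooth, $f_-$ is small, and a direct computation of the canonical bundle of the total space $\Tot_{\Gr(2,5)}(\mcQ_3(-2))$ gives $\omega_{X_-}\simeq\mcO_{X_-}$. By Corollary~\ref{cor:same End} there is an $R$-algebra isomorphism $\End_{X_-}(\scrT_-^{\star}) \simeq \End_{X_+}(\scrT_+^{\star})$. Theorem~\ref{lem:tilting_plus_side} together with Proposition~\ref{tilting prop}(1) yields a derived equivalence $\mcD(X_+) \simeq \mcD(\End_{X_+}(\scrT_+^{\star}))$; since $X_+$ is smooth, every complex in $\mcD(X_+)$ is perfect, so the same holds on the module side, which forces $\End_{X_+}(\scrT_+^{\star})$ to have finite global dimension. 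Hence so does $\End_{X_-}(\scrT_-^{\star})$, and Proposition~\ref{generation criteria} concludes that $\scrT_-^{\star}$ generates $\mcD(X_-)$.

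The principal obstacle is the generation step itself. Unlike $\scrT_+^{\star}$, which is pulled back from $\Gr(3,5)$ and can be shown to generate $\mcD(X_+)$ directly via \cite[Lemma~3.1]{hara_G2}, the bundle $\scrP$ is not pulled back from $\Gr(2,5)$, so that argument is unavailable on the $-$-side. The plan circumvents this by transporting the finite-global-dimension condition from the smooth $X_+$-side to the $X_-$-side through the shared endomorphism algebra, and then applying Proposition~\ref{generation criteria}.
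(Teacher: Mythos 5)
Your proposal is correct and follows essentially the same route as the paper: pretilting is read off from the computations of \S\ref{sect: easy ext} and \S\ref{sect: prob ext} (with the $\clubsuit,\diamondsuit$ cases reduced to $\spadesuit,\heartsuit$ by duality), and generation is obtained from Proposition~\ref{generation criteria} using that $\End_{X_-}(\scrT_-^{\star})\simeq\End_{X_+}(\scrT_+^{\star})$ has finite global dimension because $\scrT_+^{\star}$ is tilting on the smooth $X_+$. Your closing remark correctly identifies the key point, namely that generation cannot be checked directly on the $-$-side and must be transported through the shared endomorphism algebra, which is exactly what the paper does.
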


\begin{proof}
Cohomology computations in \S\ref{sect: easy ext} and \S\ref{sect: prob ext} show that $\scrT_-^{\spadesuit}$ and $\scrT_-^{\heartsuit}$ are pretilting.
Since $\End_{X_-}(\scrT_-^{\star}) \simeq \End_{X_+}(\scrT_+^{\star})$, 
this algebra has finite global dimension for all $\star \in \{\spadesuit, \clubsuit, \heartsuit, \diamondsuit \}$.
Therefore applying Proposition~\ref{generation criteria} shows that $\scrT_-^{\star}$ is a generator, and hence a tilting bundle.
\end{proof}

\begin{rem}
The restriction of the bundle $\scrP$ to the zero-section $\Gr(2,5) \simeq \G_- \subset X_-$ is isomorphic to $\mcO(-2) \oplus \mcU_2^{\vee} \simeq \mcO(-2) \oplus \mcU_2(1)$.
Thus the restriction of the tilting bundle $\scrT_-^{\spadesuit}$ to the zero-section is isomorphic to a direct sum (with multiplicities) of the following collection of bundles over $\Gr(2,5)$:
\[ \{\mcO(a) \mid -3 \leq a \leq 1\} \cup \{\mcU_2(b) \mid -2 \leq b \leq 2\}, \]
and this set forms the full strong exceptional collection
\begin{align} \label{Lef exc coll Gr(2,5)}
 \mcD(\Gr(2,5)) = \langle \mcO(-3), \mcU_2(-2), \mcO(-2), \mcU_2(-1), \mcO(-1), \mcU_2, \mcO, \mcU_2(1), \mcO(1), \mcU_2(2) \rangle
\end{align}
of $\Gr(2,5)$.
In other words, the restriction $\scrT_-^{\spadesuit}$ remains tilting, and is given by a full strong exceptional collection.

On the contrary, the restriction of the tilting bundle $\scrT_-^{\heartsuit}$ to the zero-section is isomorphic to a direct sum (with multiplicities) of the following collection of bundles over $\Gr(2,5)$:
\[ \{\mcO(a) \mid -3 \leq a \leq 1\} \cup \{\mcU_2(b) \mid -2 \leq b \leq 2\} \cup \{\Sym^2 \mcU_2(3) \}. \]
The first two sets of bundles actually form the same full exceptional collection (\ref{Lef exc coll Gr(2,5)}).
However, as an indecomposable summand of $\scrT_-^{\heartsuit}|_{\mathbb{G}_-}$, there appears an additional bundle, namely $\Sym^2 \mcU_2(3)$.
In particular, the restriction $\scrT_-^{\heartsuit}|_{\mathbb{G}_-}$ is no longer tilting.

This observation indicates that tilting bundles are not necessarily constructed solely from full strong exceptional collections on the base.
This would highlight the difficulty of constructing a tilting bundle in more general settings including simple flops of type $A^G_n$ for $n \geq 6$.
\end{rem}

\begin{rem} \label{rem:why not Kapranov}
Pulling-back the direct sum of Kapranov's collection over $\Gr(3,5)$ to $X_+$ gives the bundle
\[ \mcO \oplus \mcO(1) \oplus \mcO(2) \oplus \scrU_3^{\vee} \oplus \scrU_3^{\vee}(1) \oplus \scrU_3(1) \oplus \scrU_3(2) \oplus \Sym^2 \scrU_3^{\vee} \oplus \bS^{2,1}\scrU_3^{\vee} \oplus \bS^{2,2}\scrU_3^{\vee}. \]
One can show that this bundle is also tilting (c.f.~\S\ref{sect:explicitHLwindow}).
The corresponding bundle over $X_-$ (in the sense of Corollary~\ref{cor:same End}) is 
\[ \mcO \oplus \mcO(-1) \oplus \mcO(-2) \oplus \scrP \oplus \scrP(-1) \oplus \scrP^{\vee}(-1) \oplus \scrP^{\vee}(-2) \oplus \Sym^2 \scrP \oplus \bS^{2,1}\scrP \oplus \bS^{2,2} \scrP. \]
Unfortunately, this bundle is not pretilting.
Indeed, mimicking the computations in \S\ref{sect:minus-side-tilting},
\begin{center}
$\Ext^3(\scrP^{\vee}(-1), \Sym^2\scrP) \simeq \Ext^6(\scrP^{\vee}(-1), \Sym^2\scrP) \simeq \C$.
\end{center}
This is the reason why we should work with bundles in Definition~\ref{def:tilting bundles},
which correspond to the new exceptional collections in \S\ref{sect:ourFEC}.
\end{rem}

\section{Conclusions} \label{sect:conclusion}

\subsection{The magic window categories} \label{sect:window main result}

\begin{defi}
\begin{enumerate}
\item[(1)] For each $\star \in \{\spadesuit, \clubsuit, \heartsuit, \diamondsuit \}$, 
define the set $\scrC^{\star}$ of dominant $\GL(S)$-weights as follows.
\begin{align*}
\scrC^{\spadesuit} 
  &\coloneqq 
   \left\{
   \begin{array}{l}
     (-1,-1,-1), (0,0,0), (1,1,1), (2,2,2), (3,3,3), \\
     (0,-1,-1), (1,0,0), (2,1,1), (1,1,0), (2,2,1)
   \end{array}
   \right\}, \\
\scrC^{\heartsuit} 
  &\coloneqq 
   \left\{
   \begin{array}{l}
     (-1,-1,-1), (0,0,0), (1,1,1), (2,2,2), \\
     (0,-1,-1), (1,0,0), (2,1,1), (1,1,0), (2,2,1), (1,-1,-1)
   \end{array}
   \right\}, \\
\scrC^{\clubsuit} 
  &\coloneqq 
   \left\{
   \begin{array}{l}
     (-3,-3,-3), (-2,-2,-2), (-1,-1,-1), (0,0,0), (1,1,1), \\
     (-1,-1,-2), (0,0,-1), (1,1,0), (-1,-2,-2), (0,-1,-1)
   \end{array}
   \right\}, \\
   \scrC^{\diamondsuit} 
  &\coloneqq 
   \left\{
   \begin{array}{l}
     (-2,-2,-2), (-1,-1,-1), (0,0,0), (1,1,1), \\
     (-1,-1,-2), (0,0,-1), (1,1,0), (-1,-2,-2), (0,-1,-1), (1,1,-1)
   \end{array}
   \right\}.
\end{align*}

\item[(2)] For each $\star \in \{\spadesuit, \clubsuit, \heartsuit, \diamondsuit \}$,
let $\scrT^{\star}$ be a vector bundle over $\mcX$ that is defined as
\[ \scrT^{\star} \coloneqq \bigoplus_{\chi \in \scrC^{\star}} \mcO_W \otimes V(\chi), \]
where $V(\chi)$ is the irreducible $\GL(S)$-representation whose highest weight is $\chi$.

\item[(3)] For each $\star \in \{\spadesuit, \clubsuit, \heartsuit, \diamondsuit \}$, let $\scrW^{\star} \subset \mcD(\mcX)$
denote the thick subcategory generated by the bundle $\scrT^{\star}$.
In addition, define the \textit{restriction functors} $\res_{\pm}^{\star}$ as the composite
\[ \res^{\star}_{\pm} \colon \scrW^{\star} \hookrightarrow \mcD(\mcX) \xrightarrow{\iota_{\pm}^*} \mcD(X_{\pm}).\]
\end{enumerate}
\end{defi}

\begin{rem}
Note that $\iota_+^*(\mcO_W \otimes V(\chi)) \simeq \bS^{\chi}\scrU_3^{\vee}$
and $\iota_-^*(\mcO_W \otimes V(\chi)) \simeq \bS^{\chi}\scrP$.
\end{rem}

\begin{defi}
For $\star \in \{\spadesuit, \clubsuit, \heartsuit, \diamondsuit \}$,
let $\mcT_{\pm}^{\star}$ be the bundle in Definition~\ref{def:tilting bundles}.
Then define an $R$-algebra $\Lambda^{\star}$ by
\[ \Lambda^{\star} \coloneqq \End_{X_+}(\scrT_+^{\star}) \simeq \End_{X_-}(\scrT_-^{\star}). \]
\end{defi}

\begin{thm} \label{thm:DHKR window}
The restriction functors 
$\res^{\star}_{\pm} \colon \scrW^{\star} \to \mcD(X_{\pm})$
are equivalences of categories for each  $\star \in \{\spadesuit, \clubsuit, \heartsuit, \diamondsuit \}$.
\end{thm}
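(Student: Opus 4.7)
The plan is to factor $\res_\pm^\star$ through $\mcD(\Lambda^\star)$: on one side via the tilting equivalence
\[ \RHom_{X_\pm}(\scrT_\pm^\star, -) \colon \mcD(X_\pm) \xrightarrow{\sim} \mcD(\Lambda^\star) \]
from Proposition~\ref{tilting prop}, and on the other via a pretilting-style equivalence
\[ \RHom_\mcX(\scrT^\star, -) \colon \scrW^\star \xrightarrow{\sim} \mcD(\Lambda^\star). \]
This reduces the theorem to three checks: (i) $\iota_\pm^*\scrT^\star \simeq \scrT_\pm^\star$; (ii) $\scrT^\star$ is pretilting on $\mcX$; and (iii) the natural map $\End_\mcX(\scrT^\star) \to \End_{X_\pm}(\scrT_\pm^\star) = \Lambda^\star$ is an isomorphism.

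For (i), I would simply unwind the definitions. The remark following the definition of $\scrT^\star$ gives $\iota_+^*(\mcO_W \otimes V(\chi)) \simeq \bS^\chi \scrU_3^\vee$ and $\iota_-^*(\mcO_W \otimes V(\chi)) \simeq \bS^\chi \scrP$, and going through the weights in $\scrC^\star$ reproduces the summands listed in Definition~\ref{def:tilting bundles}, after standard Schur-power identifications such as $\bS^{(1,1,1)}\scrU_3^\vee \simeq \mcO(1)$ and $\bS^{(1,1,0)}\scrU_3^\vee \simeq \scrU_3(1)$. For (ii), I would appeal to the fact that $W$ is affine and $\GL(S)$ is reductive: sheaf cohomology of $W$ vanishes in positive degree on quasi-coherent sheaves, and rational cohomology of a reductive group vanishes in positive degree on rational modules. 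The Leray spectral sequence for the smooth cover $W \to \mcX$ then forces $H^{>0}(\mcX, \mcO_W \otimes V) = 0$ for any $\GL(S)$-representation $V$, and in particular $\Ext^{>0}_\mcX(\scrT^\star, \scrT^\star) = 0$.

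The hard part will be (iii), which reduces to a codimension estimate. Since $\scrT^{\star\vee} \otimes \scrT^\star$ is locally free on $\mcX$, the restriction map
\[ H^0(\mcX, \scrT^{\star\vee} \otimes \scrT^\star) \to H^0(X_\pm, \scrT_\pm^{\star\vee} \otimes \scrT_\pm^\star) \]
is an isomorphism provided the complement $W \setminus W_\pm^{\mathrm{ss}}$ has codimension $\geq 2$ in $W$, by a Hartogs-type argument. I would establish this via a direct Hilbert--Mumford analysis: the $+$-unstable locus should coincide with $\{B \colon S \to V \text{ not injective}\}$, of codimension $3$ in $W$, while the $-$-unstable locus is contained in $\{\omega = 0\} \cup \{\ker B \cap \ker \omega \neq 0\}$, again of codimension $\geq 2$.

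With (i)--(iii) in hand, the conclusion should be formal. The pretilting and generator properties of $\scrT^\star$, together with the finite global dimension of $\Lambda^\star$ (inherited from its realisation as the endomorphism ring of a tilting bundle on the smooth variety $X_+$ via Proposition~\ref{tilting prop}), will make $\RHom_\mcX(\scrT^\star, -)$ an equivalence $\scrW^\star \xrightarrow{\sim} \mcD(\Lambda^\star)$ by the standard argument of Proposition~\ref{generation criteria} applied within the thick subcategory. Composing with the inverse of the tilting equivalence on $X_\pm$ then identifies $\res_\pm^\star$, which is therefore an equivalence for each $\star \in \{\spadesuit, \clubsuit, \heartsuit, \diamondsuit\}$.
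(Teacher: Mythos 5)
Your argument is correct, and it rests on exactly the same three inputs as the paper's proof: the identification $\iota_\pm^*\scrT^\star\simeq\scrT_\pm^\star$, the vanishing of $\Ext^{>0}_{\mcX}(\scrT^\star,\scrT^\star)$ (which the paper asserts via the pretilting property and which you justify, correctly, by affineness of $W$ plus linear reductivity of $\GL(S)$), and the degree-zero comparison via the codimension-$\geq 2$ Hartogs argument on $W\setminus W_\pm^{\mathrm{ss}}$ (your codimension estimates, $3$ on the $+$ side and $\geq 3$ on the $-$ side, agree with the Kempf--Ness analysis in \S\ref{sect:comparisonHL}). The packaging differs: the paper applies Proposition~\ref{prop:generator ff} directly to $\res_\pm^\star$ to get full faithfulness and then Proposition~\ref{prop: dense im} together with the generation of $\mcD(X_\pm)$ by $\scrT_\pm^\star$ to get essential surjectivity, whereas you factor through $\mcD(\Lambda^\star)$ and deduce the result from the two tilting-type equivalences and the commutativity of the triangle (which is exactly what the paper records afterwards in Corollary~\ref{cor:main result}(2)). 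Your route buys the noncommutative model for free but pays for it by needing the finite global dimension of $\Lambda^\star$, which the direct argument avoids. One small imprecision: the equivalence $\RHom_{\mcX}(\scrT^\star,-)\colon\scrW^\star\xrightarrow{\sim}\mcD(\Lambda^\star)$ does not follow from Proposition~\ref{generation criteria}, whose proof uses crepancy of a resolution to exclude semiorthogonal decompositions and has no analogue ``within the thick subcategory''; the correct justification is the same pair of appendix results the paper uses, namely Proposition~\ref{prop:generator ff} (full faithfulness, since $\Ext^i_{\mcX}(\scrT^\star,\scrT^\star)\simeq\Ext^i_{\Lambda^\star}(\Lambda^\star,\Lambda^\star)$ for all $i$) and Proposition~\ref{prop: dense im} (the essential image is thick, contains $\Lambda^\star$, and hence contains $\Perf(\Lambda^\star)=\mcD(\Lambda^\star)$ by finiteness of global dimension). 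With that substitution your proof is complete.
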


\begin{proof}
Fix $\star \in \{\spadesuit, \clubsuit, \heartsuit, \diamondsuit \}$.
By construction $\res^{\star}_{\pm}(\scrT^{\star}) \simeq \scrT_{\pm}^{\star}$.
In addition, the natural morphism
\[ \Ext^i_{\scrW^{\star}}(\scrT^{\star}, \scrT^{\star}) \to \Ext_{X_{\pm}}^i(\scrT_{\pm}^{\star}, \scrT_{\pm}^{\star}) \]
is an isomorphism 
for all $i \neq 0$ since the bundles $\scrT^{\star}$ and $\scrT_{\pm}^{\star}$ enjoy the pretilting property,
and for $i = 0$ since $\codim_W (W \setminus W^{\mathrm{ss}}_{\pm}) \geq 2$.
Since the bundle $\scrT^{\star}$ is a classical generator of the category $\scrW^{\star}$ by definition,
Proposition~\ref{prop:generator ff} implies that the functors $\res^{\star}_{\pm}$ are fully faithful.

Since the derived category $\mcD(\mcX)$ is idempotent complete, so is the thick subcategory $\scrW^{\star}$.
Therefore Proposition~\ref{prop: dense im} shows that the essential image of $\res^{\star}_{\pm}$ is a thick subcategory of $\mcD(X_{\pm})$.
Note that the essential image of $\res^{\star}_{\pm}$ contains the bundle $\scrT_{\pm}^{\star}$, which is a classical generator of $\mcD(X_{\pm})$.
Thus the essential image coincides with the whole category $\mcD(X_{\pm})$, and hence $\res^{\star}_{\pm}$ is essentially surjective.
\end{proof}

With the property proved in Theorem~\ref{thm:DHKR window},
the subcategory $\scrW^{\star}$ is called a \textit{window} for both open subschemes $X_+, X_- \subset \mcX$.
Furthermore, by definition, the window $\scrW^{\star}$ is generated by sheaves of the form $\mcO \otimes V(\chi)$.
A window subcategory of $\mcD(\mcX)$ that enjoys this property is sometimes called a \textit{magic} window category (c.f.~\cite{halpernleistner_derived}).

\subsection{The main results}

\begin{cor} \label{cor:main result}
For each $\star \in \{\spadesuit, \clubsuit, \heartsuit, \diamondsuit \}$, the following hold.
\begin{enumerate}
\item[\rm (1)]  
The functor 
\[ \res_-^{\star} \circ (\res_+^{\star})^{-1}  \colon \mcD( X_+) \xrightarrow{\sim} \mcD( X_-) \]
 is an equivalence of $R$-linear triangulated categories such that $\Phi(\mcO_{X_+}) \simeq \mcO_{X_-}$.
 \item[\rm (2)] The following diagram commutes.
 \[ \begin{tikzpicture}[auto,x=12mm]
 \node (W) at (0,0) {$\scrW^{\star}$};
 \node (+) at (-2,-1) {$\mcD(X_+)$};
 \node (-) at (2,-1) {$\mcD(X_-)$}; 
 \node (A) at (0,-2) {$\mcD(\Lambda^{\star})$};
 
 \draw[->] (W) to node[swap] {$\scriptstyle \res_+^{\star}$} (+);
 \draw[->] (W) to node {$\scriptstyle \res_-^{\star}$} (-);
 \draw[->] (+) to node[swap] {$\scriptstyle \RHom(\scrT_+^{\star},-)$} (A);
 \draw[->] (-) to node {$\scriptstyle \RHom(\scrT_-^{\star},-)$} (A);
 \end{tikzpicture} \]
\item[\rm (3)] The algebra $\Lambda^{\star}$ is an NCCR of $R$ and derived equivalent to both crepant resolutions $X_{\pm}$ of $\Spec R$.
\end{enumerate}
\end{cor}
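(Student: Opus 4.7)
The plan is to deduce all three parts as formal consequences of Theorem~\ref{thm:DHKR window} and the tilting/NCCR dictionary recalled in \S\ref{sect:tilting prelim}. The substantive work of constructing $\scrT^{\star}_{\pm}$ and identifying $\scrW^{\star}$ as a common window for $X_{+}$ and $X_{-}$ has already been carried out, so each part should fall out in a few lines.

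For part~(1), I would note that the composition $\Phi \coloneqq \res^{\star}_{-}\circ(\res^{\star}_{+})^{-1}$ is an equivalence because each factor is, by Theorem~\ref{thm:DHKR window}. Its $R$-linearity follows from the natural morphism $\mcX = [W/\GL(S)] \to \Spec R$ induced by the invariant ring $R = \C[W]^{\GL(S)}$, which is compatible with both $\iota_{\pm}$ and hence makes both $\res^{\star}_{\pm}$ into $R$-linear functors. For the image of $\mcO_{X_{+}}$, I would observe that the trivial weight $(0,0,0)$ belongs to $\scrC^{\star}$ for each $\star \in \{\spadesuit, \clubsuit, \heartsuit, \diamondsuit\}$, so that $\mcO_{\mcX} \simeq \mcO_W \otimes V(0,0,0)$ is a direct summand of $\scrT^{\star}$ and in particular lies in $\scrW^{\star}$; since $\iota_{\pm}^{*}\mcO_{\mcX} \simeq \mcO_{X_{\pm}}$, this yields $\Phi(\mcO_{X_{+}}) \simeq \mcO_{X_{-}}$.

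For part~(2), by construction $\res^{\star}_{\pm}(\scrT^{\star}) \simeq \scrT^{\star}_{\pm}$, and since each $\res^{\star}_{\pm}$ is an equivalence, one obtains natural isomorphisms of functors
\[ \RHom_{\mcX}(\scrT^{\star}, -)|_{\scrW^{\star}} \simeq \RHom_{X_{\pm}}(\scrT^{\star}_{\pm}, \res^{\star}_{\pm}(-)), \]
taking values in $\mcD(\Lambda^{\star})$ once Corollary~\ref{cor:same End} is used to identify the two endomorphism algebras with the common $\Lambda^{\star}$; this is exactly the commutativity of the triangle. For part~(3), the flopping contractions $f_{\pm}\colon X_{\pm} \to \Spec R$ are small morphisms onto the normal Gorenstein affine $\Spec R$, so applying Proposition~\ref{tilting prop} parts~(2)--(3) to $\scrT^{\star}_{\pm}$ exhibits the isomorphism $\Lambda^{\star} \simeq \End_R((f_{\pm})_{*}\scrT^{\star}_{\pm})$ realising $\Lambda^{\star}$ as an NCCR of $R$, while Proposition~\ref{tilting prop}(1) provides the $R$-linear derived equivalences $\mcD(X_{\pm}) \simeq \mcD(\Lambda^{\star})$.

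No step here presents a genuine obstacle, since Theorem~\ref{thm:DHKR window} has already done the heavy lifting of showing that $\scrW^{\star}$ is a window. The only point requiring mild care is the $R$-linearity of the window equivalences, which I would handle by observing that $\mcX$ carries a canonical morphism to $\Spec R$ through which both $f_{\pm}\circ\iota_{\pm}$ factor, so that every functor appearing in the diagram is automatically $R$-linear.
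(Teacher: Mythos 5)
Your proposal is correct and follows essentially the same route as the paper: part~(1) from Theorem~\ref{thm:DHKR window} together with the observation that $\mcO_W\in\scrW^{\star}$ restricts to $\mcO_{X_\pm}$, part~(2) from the functorial isomorphisms $\RHom_{\scrW^{\star}}(\scrT^{\star},-)\simeq\RHom_{X_\pm}(\scrT_\pm^{\star},\res_\pm^{\star}(-))$, and part~(3) from Proposition~\ref{tilting prop}. The extra details you supply ($R$-linearity via the map $\mcX\to\Spec R$, and the explicit check that $(0,0,0)\in\scrC^{\star}$ for each $\star$) are correct and only make explicit what the paper leaves implicit.
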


\begin{proof}
(1) is a direct consequence of Theorem~\ref{thm:DHKR window}.
Note that $\mcO_W \in \scrW^{\star}$ and $\res_{\pm}(\scrO_W) \simeq \mcO_{X_{\pm}}$.
The commutativity of the diagram in (2) follows since by construction there are functorial isomorphisms
$\RHom_{\scrW^{\star}}(\scrT^{\star},-) \simeq \RHom_{X_{\pm}}(\scrT_{\pm}^{\star}, \res_{\pm}^{\star}(-))$. 
Finally~(3) follows from Proposition~\ref{tilting prop}.
\end{proof}

\subsection{Application to Calabi--Yau threefolds} \label{sect:CY3results}

Recall that there is a natural isomorphism $H^0(\Gr(3,5), \mcU_3(2)) \simeq H^0(\Gr(2,5), \mcQ_3^{\vee}(2))$.
Let $s_+ \in H^0(\Gr(3,5), \mcU_3(2))$ be a section, and $s_- \in H^0(\Gr(2,5), \mcQ_3^{\vee}(2))$ the corresponding section.
If $s_{\pm}$ are regular sections, the zero loci $\bV(s_+) \subset \Gr(3,5)$ and $\bV(s_-) \subset \Gr(2,5)$ are (possibly singular) projective Calabi--Yau threefolds.
This pair $(\bV(s_+), \bV(s_-))$ is called a pair of dual Calabi--Yau threefolds in \cite{ourpaper_cy3s}.

The following corollary was first proved in \cite[Theorem~5.7]{ourpaper_cy3s} when the Calabi--Yau threefolds $\bV(s_{\pm})$ are smooth.
The following gives a new proof via the flop and noncommutative algebra, and extends the result of \cite{ourpaper_cy3s} to singular situations.

\begin{cor} \label{cor:CY3Deq} 
Let $s_+ \in H^0(\Gr(3,5), \mcU_3(2))$ be a section, 
and $s_- \in H^0(\Gr(2,5), \mcQ_3^{\vee}(2))$ the corresponding section.
If $s_{\pm}$ are regular sections,
then there is a derived equivalence
\[ \mcD(\bV(s_+)) \simeq \mcD(\bV(s_-)) \]
of (possibly singular) Calabi--Yau threefolds.
\end{cor}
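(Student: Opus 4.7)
The plan is to transport the $R$-linear flop equivalence of Theorem~\ref{thm:main_theorem} to Landau--Ginzburg models on $X_\pm$ and then descend to $\bV(s_\pm)$ via Kn\"orrer periodicity, following the strategy of \cite{segal_git, addingtondonovansegal}.

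First I would identify the common superpotential: under the isomorphism $H^0(\Gr(3,5),\mcU_3(2)) \simeq H^0(\Gr(2,5),\mcQ_3^{\vee}(2))$, the pair $(s_+, s_-)$ determines a single element $W_s \in R$, since each of these $H^0$-groups sits naturally as the fibre-weight-one piece of $R \simeq H^0(X_\pm,\mcO_{X_\pm})$ with respect to the $\Gm$-action that scales the fibres of the vector bundles defining $X_\pm$. Pulling $W_s$ back along $f_\pm$ produces the canonical superpotentials $Q_{s_\pm} \colon X_\pm \to \C$. Moreover, the fibre $\Gm$-actions on $X_\pm$ are restrictions of a single $\Gm$-action on the ambient stack $\mcX$ given by scaling the second summand $\Hom(S,(\det S)^{\otimes 2})$ of $W$.

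Next, by Kn\"orrer periodicity for the regular sections $s_\pm$ (as recalled in \S\ref{sect:CY3results}), there would be equivalences
\[ \mcD(\bV(s_\pm)) \xrightarrow{\sim} \Dcoh_{\Gm}\bigl(X_\pm, \chi_{\id}, Q_{s_\pm}\bigr), \]
which hold whether or not $\bV(s_\pm)$ is smooth; this is the ingredient that will extend the result of \cite{ourpaper_cy3s} to the singular case. The central step would then be to upgrade the window equivalences of Theorem~\ref{thm:DHKR window} to a $\Gm$-equivariant statement: the generating bundles $\mcO_W \otimes V(\chi)$ of $\scrW^\star$ are manifestly $\Gm$-equivariant, and the $\Ext$-vanishings of \S\ref{sec:windows} were obtained from cohomology of homogeneous bundles on flag varieties which carry compatible $\Gm$-actions, so they lift to the equivariant category without change. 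With the equivariant window in place, one obtains an $R$-linear, $\Gm$-equivariant equivalence between the equivariant derived categories of $X_+$ and $X_-$ intertwining multiplication by any element of $R$, in particular by $W_s$. Such an equivalence descends to an equivalence of derived factorization categories
\[ \Dcoh_{\Gm}\bigl(X_+, \chi_{\id}, Q_{s_+}\bigr) \simeq \Dcoh_{\Gm}\bigl(X_-, \chi_{\id}, Q_{s_-}\bigr), \]
and composing the three equivalences yields the desired $\mcD(\bV(s_+)) \simeq \mcD(\bV(s_-))$.

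The main obstacle would be the verification that the $\Gm$-equivariant window equivalence descends cleanly to factorization categories in the possibly singular case: this is where the detailed arguments of \cite{segal_git, addingtondonovansegal} need to be imported and adapted. Since the tilting bundles $\scrT_\pm^\star$ and the windows $\scrW^\star$ are very concrete, no genuinely new difficulty is expected beyond the technical upgrade to the equivariant and factorization-category settings, and the singularity of $\bV(s_\pm)$ causes no additional problem because the entire chain of equivalences is formulated at the level of (possibly singular) derived categories throughout.
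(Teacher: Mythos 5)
Your outline and the paper share the same scaffolding — the $\Gm$-action scaling the fibre directions, Kn\"orrer periodicity $\mcD(\bV(s_\pm)) \simeq \Dcoh_{\Gm}(X_\pm, \chi_{\id}, Q_{s_\pm})$, and an equivariant upgrade of the tilting/window data — but the routes diverge at the central comparison step, and the divergence matters.

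You propose to lift the window equivalence $\res_-^\star \circ (\res_+^\star)^{-1}$ to a $\Gm$-equivariant $R$-linear equivalence between $X_+$ and $X_-$, and then assert that ``such an equivalence descends to an equivalence of derived factorization categories''. That assertion is precisely the hard part, and it is not a formal consequence of $R$-linearity plus $\Gm$-equivariance: an abstract triangulated equivalence that intertwines multiplication by $W_s$ does not automatically induce an equivalence at the curved/factorization level without additional structure (a dg/Fourier--Mukai enhancement compatible with the potential, or an explicit bimodule realization). You flag this as ``the main obstacle'' and defer to \cite{segal_git, addingtondonovansegal}, but then conclude that ``no genuinely new difficulty is expected''; that under-sells what needs to be shown. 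The paper avoids this difficulty entirely by \emph{not} comparing the two factorization categories directly. Instead it applies Hirano's equivariant tilting theorem \cite[Theorem~1.2]{hiranoequivtilt} (together with \cite[Theorem~1.2]{hiranoknorrer}) to each side separately, obtaining
\[
\Dcoh_{\Gm}(X_\pm, \chi_{\id}, Q_{s_\pm}) \simeq \Dmod_{\Gm}(\Lambda_\pm^{\star}, \chi_{\id}, Q_{\bar s}),
\]
and then simply observes that $\Lambda_+^{\star} \simeq \Lambda_-^{\star}$ as $\Gm$-equivariant $R$-algebras (Corollary~\ref{cor:same End}, equivariantized), so the two right-hand sides are literally the same category. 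In other words, the paper passes through the NCCR on both sides and never needs to descend the window equivalence itself. If you want to save your more direct route, you would need to show that the window equivalence is realized as a $\Gm$-equivariant tilting equivalence (which it is, via $\scrT_\pm^\star$) and then invoke Hirano's theorem in essentially the same way — at which point the two proofs converge. As written, though, the descent step in your argument is a genuine gap rather than a routine technicality.
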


\begin{proof}
Let $\Gm$ acts on $\Hom(S,V)$ trivially, and on $\Hom(S, (\det S)^{\otimes 2})$ by scaling. 
This induces a $\Gm$-action on $W \coloneqq \Hom(S, V) \oplus \Hom(S, (\det S)^{\otimes 2})$,
which descends to a $\Gm$-action on $\mcX \coloneqq [W/\GL(S)]$.
In addition, for each stability, the stable locus is $\Gm$-invariant, and hence the above action also induces 
$\Gm$-actions on $X_+$ and $X_-$, which coincides with the fiber-wise scaling action.

Note that all bundles over $\mcX$ of the form $\mcO \otimes V(\chi)$ are $\Gm$-linearizable.
This implies that, for any $\star \in \{\spadesuit, \clubsuit, \heartsuit, \diamondsuit \}$,
the tilting bundles $\scrT_+^{\star}$ and $\scrT_-^{\star}$ admit $\Gm$-equivariant structure such that the natural isomorphism $\End_{X_+}(\scrT_+^{\star}) \simeq \End_{X_-}(\scrT_-^{\star})$ is $\Gm$-equivariant.
Now by \cite[Theorem~1.2]{hiranoknorrer} and \cite[Theorem~1.2]{hiranoequivtilt} (see also \cite{shipman, Isik13}), there are equivalences
\begin{align*}
\mcD(\bV(s_+)) &\simeq \Dcoh_{\Gm}(X_+, \chi_{\id}, Q_{s_+}) \simeq \Dmod_{\Gm}(\Lambda_+^{\star}, \chi_{\id}, Q_{\bar{s}}), \\
\mcD(\bV(s_-)) &\simeq \Dcoh_{\Gm}(X_-, \chi_{\id}, Q_{s_-}) \simeq \Dmod_{\Gm}(\Lambda_-^{\star}, \chi_{\id}, Q_{\bar{s}}),
\end{align*}
where the notations are as follows.
\begin{enumerate}
\item[$\bullet$] $\chi_{\id} \colon \Gm \to \Gm$ is the identity map considered as a character.
\item[$\bullet$] $Q_{s_{\pm}} \colon X_{\pm} \to \A^1$ are the functions corresponding to $s_{\pm}$.
\item[$\bullet$] $Q_{\bar{s}} \colon \Spec R \to \A^1$ is the function such that $Q_{s_{\pm}} = Q_{\bar{s}} \circ f_{\pm}$.
\item[$\bullet$] $\Lambda_{\pm}^{\star} = \End_{X_{\pm}}(\scrT_{\pm}^{\star})$.
\item[$\bullet$] $\Dcoh_{\Gm}(X_{\pm}, \chi_{\id}, Q_{s_{\pm}})$ is the derived factorization category associated to the gauged linear sigma model $(X_{\pm}, \chi_{\id}, Q_{s_{\pm}})^{\Gm}$.
\item[$\bullet$] $\Dmod_{\Gm}(\Lambda_{\pm}^{\star}, \chi_{\id}, Q_{\bar{s}})$ is the derived factorization category associated to the noncommutative gauged linear sigma model $(\Lambda_{\pm}^{\star}, \chi_{\id}, Q_{\bar{s}})^{\Gm}$.
\end{enumerate}
Thus, using that the isomorphism 
$\Lambda_+^{\star} \simeq \Lambda_-^{\star}$ in Corollary~\ref{cor:same End}
is $\Gm$-equivariant, we have the result.
\end{proof}

\section{Comparison with known window categories} \label{sect:comparisonHL}
The aim of this section is to explain why
our construction of tilting bundles cannot be deduced from
known window categories, for instance those coming from general results of Halpern-Leistner \cite{halpernleistner_GIT}.
We first explicitly describe the Kempf-Ness stratification for each stability condition in our setting, 
together with the associated one-parameter subgroups. 
We then examine Halpern-Leistner's window categories in detail,
and observe that the window categories we construct in \S\ref{sec:windows} and \S\ref{sect:window main result} differ from his window categories.

\subsection{Kempf-Ness stratification} We use the notation from~\S\ref{sec:git}. 
Consider a maximal subtorus $T$ of $\GL(S)=\GL(3)$ given by diagonal matrices with respect to a fixed basis of~$S$. 
Fixing a basis of $V\simeq \mathbb C^5$ we get natural coordinates of the space $W$ represented by  $(B,\omega)$, where $B=\left (B_{i,j}\right)$ is a $3\times 5$ matrix with entries $B_{i,j}$ for $i\in  \{1\dots 3\}$ and $j \in \{1\dots 5\}$, whereas $\omega=(\omega_1,\omega_2,\omega_3)$.  The action of the torus $T$ on $W$ equipped with the chosen basis is the diagonal action with the following weights
\begin{align*}& w(B_{1,j})=(-1,0,0) , \
 w(B_{2,j})=(0,-1,0) , \ 
 w(B_{3,j})=(0,0,-1)  \text{ for } j\in \{1\dots 5\} \\
 &w(\omega_1)=(1,2,2),\, w(\omega_2)=(2,1,2),\, w(\omega_3)=(2,2,1).
\end{align*}
We now fix a norm  the standard Euclidean norm so that  for $\lambda$ a 1-PS of $T$
\begin{align*}
        \lambda\colon \C^* & \to \GL(S) \\
        t  & \mapsto \diag(t^{k_1}, t^{k_2}, t^{k_3})
\end{align*}
we have $\|\lambda\|:=\sqrt{k_1^2+k_2^2+k_3^2}$. The norm is invariant with respect to the symmetric group. Moreover for a  character $\rho$ of $T$ corresponding to a weight $(r_1,r_2,r_3)$ we define $(\rho,\lambda)=r_1k_1+r_2k_2+r_3k_3 $.
Now, for any  point $v\in W$ we recall the definition 
$$M^{\rho}_T(v):= \inf_{\lambda} \frac{(\rho, \lambda)}{\| \lambda \|},$$
where the infimum is taken over all 1-PSs $\lambda$ for which $\lim_{t\to 0}\lambda (t)\cdot v$ exists.

We consider the unstable locus of the induced action of $T$ on $W$. By the Hilbert-Mumford criterion we have $v$ is unstable if $M^{\rho}_T(v) <0$ and $\lambda$ is called $\rho$-adapted to $v$ when $$M^{\rho}_T(v)=  \frac{(\rho, \lambda)}{\| \lambda \|}\text{ and }\lim_{t\to 0}\lambda (t)\cdot v \text{ exists. }$$

We stratify the unstable locus of the action of $T$ by their $\rho$-adapted 1-PSs for $\rho$ induced by characters of $\GL(S)$. By a classical result of Kempf adapted to the affine case (see \cite[Theorem 2.15]{Hoskins}), the Kempf-Ness stratification with respect to a character $\rho$ of the action of $\GL(S)$ will then be given by the $\GL(S)$-invariant subsets generated by the strata of the stratification of the unstable locus of the $T$-action with respect to the induced character of~$T$. Note that some of these invariant subsets might be equal for two different strata.

Note first that the condition ``$\lim_{t\to 0}\lambda (t)\cdot v$ exists'' is equivalent to the condition 
$(\lambda, w)\geq 0$ for each weight  $w$  corresponding to a coordinate at which $v\in W$ has nonzero coefficient.

Up to scaling we essentially have two cases for the character $\rho$. These are represented by $(r_1,r_2,r_3)=(1,1,1)$ or $(r_1,r_2,r_3)=(-1,-1,-1)$ which correspond to the two sides of our variation of GIT.

\subsubsection{The $+$ side}
When $(r_1,r_2,r_3)=(-1,-1,-1)$ for $v=(v_1\dots v_{18})=(B,\omega )\in W$  we are looking for triples $(k_1,k_2,k_3)$  minimising the  function 
$$ f(k_1,k_2,k_3):=-\frac{k_1+k_2+k_3}{\sqrt{k_1^2+k_2^2+k_3^2}}$$
under the conditions 
\begin{equation}\label{condition}
w(v_i) \cdot (k_1,k_2,k_3) \geq 0 \text{ for all  $i$ such that } v_i \neq 0.
\end{equation}
Note that to have $v$ unstable we need  $\frac{-k_1-k_2-k_3}{\sqrt{k_1^2+k_2^2+k_3^2}} <0$ for some $(k_1,k_2,k_3)$ satisfying condition \eqref{condition}. This means in particular that for $v$ to be unstable the corresponding condition \eqref{condition} must allow some $k_i$ to be positive. We now make two observations:
\begin{enumerate}
\item[(a)]  Changing some $k_i$ to $|k_i|$ decreases $f$.
\item[(b)] Changing a negative $k_i$ to zero decreases $f$.
\end{enumerate}
Now we have three possibilities for $(k_1,k_2,k_3)$ depending on $v$ or more precisely on the conditions \eqref{condition}.
\begin{enumerate}
\item All $k_i$ are allowed to be positive, which correspond to the case $B=0$, i.e.~$v=(0,\omega)$. Then the minimum is attained when $k_1=k_2=k_3>0$.  Indeed, by observation~(a),  to minimise $f$ we can assume that all $k_i$ are positive and use the inequality between the quadratic and arithmetic mean:
$$\frac{|k_1|+|k_2|+|k_3|}{3}\leq \sqrt{\frac{k_1^2+k_2^2+k_3^2}{3}}$$
to see that $f(k_1,k_2,k_3)\geq -\sqrt{3}$ and equality holds exactly when $k_1=k_2=k_3>0$. We hence get in this case $M(v)=-\sqrt 3$ and adapted subgroup with weight $(1,1,1)$.
\item Only two among  $k_i$ are allowed to be positive, say $k_1$ and $k_2$. Then, by observation~(b), we can set  $k_3=0$ and apply again the argument above for two variables to get $f(k_1,k_2,k_3)\geq -\sqrt{2}$ with equality holding exactly when $k_1=k_2>0$ and $k_3=0$. We get $M(v)=-\sqrt{2}$ and adapted subgroup with weights $(1,1,0)$.
\item Only one among $k_i$ is allowed to be positive, say $k_1$. As above we set $k_2=k_3=0$ and get $M(v)= -1$ with adapted subgroup with weight $(1,0,0)$.
\end{enumerate}
\subsubsection{The $-$ side}
When $(r_1,r_2,r_3)=(1,1,1)$ for $v=(B,\omega)\in W$  we are looking for triples $(k_1,k_2,k_3)$  minimising  the function 
$$ f(k_1,k_2,k_3):=\frac{k_1+k_2+k_3}{\sqrt{k_1^2+k_2^2+k_3^2}}$$
under the same conditions \eqref{condition}.

In this case to have $v$ unstable we need  $\frac{k_1+k_2+k_3}{\sqrt{k_1^2+k_2^2+k_3^2}} <0$ for some $(k_1,k_2,k_3)$ satisfying condition \eqref{condition}. This means that the condition \eqref{condition} must allow a solution with $k_1+k_2+k_3<0$.  In particular, for $v$ to be unstable, at least one among its three coordinates 
$\omega_1$, $\omega_2$, $\omega_3$ must be equal to $0$. Indeed, note that when  $\omega_1\neq 0$ condition \eqref{condition} implies 
 $2(k_1+k_2+k_3)-k_1\geq 0$ which together with $k_1+k_2+k_3<0$ implies $k_1<0$. Similarly $\omega_2\neq 0$ implies $k_2<0$ and   $\omega_3 \neq 0$ implies $k_3<0$, while all $k_i$ negative contradict $2k_1+2k_2+k_3\geq 0$.
 
Since we can always rescale the weight of the 1-PS by any positive multiple without changing the conditions as well as the 1-PSs we will assume $k_1+k_2+k_3=-1$ and minimizing $f$ will amount to minimizing the denominator $\sqrt{k_1^2+k_2^2+k_3^2}$ or equivalently minimizing $k_1^2+k_2^2+k_3^2$.

Now we have the following possibilities depending on $v$. 
\begin{enumerate}
\item All coordinates $\omega_1$, $\omega_2$, $\omega_3$ are zero. Then any triple $(k_1,k_2,k_3)$ of negative numbers satisfies \eqref{condition}. In this case the minimum is attained when $k_1=k_2=k_3<0$.  Indeed, observing that changing $k_i$ to $-|k_i|$ decreases $f$,  to minimise $f$ we can assume that all $k_i$ are not positive and use as before the inequality between the quadratic and arithmetic mean
to see that $f(k_1,k_2,k_3)\geq -\sqrt{3}$ and equality holds exactly when $k_1=k_2=k_3<0$. We hence get in this case $M(v)=-\sqrt 3$ and adapted subgroup with weight $(-\frac{1}{3},-\frac{1}{3},-\frac{1}{3})$ which after rescaling to integers gives $(-1,-1,-1)$.

\item Exactly one of the coordinates $\omega_1$, $\omega_2$, $\omega_3$ is nonzero, say $\omega_3$.  Then the condition \eqref{condition} implies $2(k_1+k_2)+k_3=2(k_1+k_2+k_3)-k_3=-2-k_3\geq 0$ and hence $k_3 \leq -2$, while $k_1+k_2\geq 1$.
We then have two possibilities:
\begin{enumerate}
\item $v$ via \eqref{condition} gives no further conditions on $k_1$ and $k_2$ (all coordinates of $v$ with weights $(-1,0,0)$ and $(0,-1,0)$ vanish) then taking into account $k_3 \leq -2$ and  $k_1+k_2+k_3=-1$ we get  $$k_1^2+k_2^2+k_3^2\geq \frac{1}{2}(k_1+k_2)^2+k_3^2\geq \frac{1}{2}(1+k_3)^2+k_3^2\geq \frac{9}{2}$$ and we have equality exactly when 
$k_1=k_2=\frac {1}{2}$ and $k_3=-2$. Hence $M(v)=-\frac{\sqrt{2}}{3}$ and the adapted subgroup has weight rescaled to integers $(1,1,-4)$.

\item $v$ via \eqref{condition} forces one among $k_1$, $k_2$ to be at most $0$ (all the coordinates of $v$ with weights $(-1,0,0)$ vanish or all the coordinates of $v$ with weights $(-1,0,0)$ vanish but not all coordinates of both types vanish), say $k_2$. Then $k_1\geq -1-k_3\geq 1$, which implies 
$$k_1^2+k_2^2+k_3^2\geq 1+4=5$$
and equality holds exactly for $(k_1,k_2,k_3)= (1,0,-2)$. In this case $M(v)=-\frac{1}{\sqrt{5}}$ and the adapted subgroup has weight $(1,0,-2)$.

\end{enumerate}

\item Exactly two among  the coordinates $\omega_1$, $\omega_2$, $\omega_3$ are nonzero, say $\omega_2$ and $\omega_3$. Then as above we have $k_3\leq -2$, $k_2\leq -2$, $k_1\geq 3$ which implies 
$$k_1^2+k_2^2+k_3^2\leq 4+4+9=17$$ and equality holds for $(k_1,k_2,k_3)= (3,-2,-2)$. Then $M(v)=-\frac{1}{\sqrt{17}}$ and the adapted 1-PS has weight  $(3,-2,-2)$.

\end{enumerate}
In all above we made some choices that differ by permutation of coordinates which gives in total $1+3+6+3=13$ strata of the stratification of the unstable locus with respect to the chosen maximal subtorus $T$. However acting by the group $\GL(S)$ on those strata we get the Kempf-Ness stratification of the unstable locus of the $\GL(S)$ action to be described as follows in terms of $v=(B,\omega)$.
\begin{enumerate}
\item Stratum given $\omega=0$ with $M((B,\omega))=-\sqrt 3$ and adapted subgroup with weight $(-1,-1,-1)$.
\item Stratum given by $\omega \neq 0$ and $\dim (\ker \omega \cap \ker B)=2$ with $M(v)=-\frac{\sqrt{2}}{3}$ and adapted subgroup with weight $(1,1,-4)$.
\item Stratum given by $\omega \neq 0$ and $\dim (\ker \omega \cap \ker B)=1$ with $M((B,\omega))=-\frac{1}{\sqrt{5}}$ and adapted subgroup with weight $(1,0,-2)$.
\end{enumerate}
Note that the type of unstable locus with $M(v)=-\frac{1}{\sqrt{17}}$ and weight $(1,0,-2)$ does not occur in the list above as such point $v$ can be moved by $\GL(S)$ to points in the stratum of the $T$ action with  $M(v)=-\frac{1}{\sqrt{5}}$ and adapted subgroup with weight $(1,0,-2)$.

\subsection{Known window categories} \label{sect:explicitHLwindow}

Following \cite[Definition~2.8]{halpernleistner_GIT}, for each element $w = (w_0,w_1,w_2) \in \Z^3$,
define subcategories $\scrG_+^w, \scrG_-^w \subset \mcD(\mcX)$.
By \cite[Theorem~2.10]{halpernleistner_GIT}, the restriction functors 
\begin{center}
$\res_+^w \colon \scrG_{+}^w \hookrightarrow \mcD([W/G]) \xrightarrow{\iota_+^*} \mcD(X_{+})$
and $\res_-^w \colon \scrG_{-}^w \hookrightarrow \mcD([W/G]) \xrightarrow{\iota_-^*} \mcD(X_{-})$
\end{center}
are equivalences.
In other words, the category $ \scrG_{+}^w$ (resp.~$\scrG_{-}^w$) is a window for $X_+ \subset \mcX$ (resp.~$X_- \subset \mcX$).
The categories $\scrG_+^w$ and $\scrG^w_-$ are also known as \textit{graded restriction categories}.

For each $+$ and $-$, 
define the set $\scrC_{\pm}^w$ of dominant $\GL(S)$-weights by
\[ \scrC_{\pm}^w \coloneqq \{ \chi \mid \mcO_W \otimes V(\chi) \in \scrG_{\pm}^w \}. \]
It follows from the equivalences $\res_{\pm}^w$ that the bundles
\begin{center}
$\iota_{+}^*\left(\bigoplus_{\chi \in \scrC_{+}^w} \mcO_W \otimes V(\chi) \right)$ over $X_+$ and
$\iota_{-}^*\left(\bigoplus_{\chi \in \scrC_{-}^w} \mcO_W \otimes V(\chi) \right)$ over $X_-$
\end{center}
are pretilting for any $w \in \Z^3$.
Note that these bundles are not generators of $\D(\Qcoh X_{\pm})$ in general.

\subsubsection{The $+$ side}
Recall that the 1-PSs associated to the KN stratification are
\begin{center} 
$\lambda_0 = (1,1,1)$, $\lambda_1 = (1,1,0)$, and $\lambda_2 = (1,0,0)$. 
\end{center}
By definition, a dominant $\GL(S)$-weight $\chi = (\chi_1, \chi_2, \chi_3)$ is contained in $\scrC_+^w$ if and only if
it satisfies
\begin{enumerate}
\item[(a)] $\chi_1 + \chi_2 + \chi_3 \in [w_0, w_0 + 15)$,
\item[(b)] $\chi_{\sigma(1)} + \chi_{\sigma(2)} \in [w_1, w_1 + 8)$ for all $\sigma \in \mathfrak{S}_3$, and
\item[(c)] $\chi_i \in [w_2, w_2+3)$ for all $i = 1,2,3$.
\end{enumerate}
When $w = (w_0,w_1,w_2) = (-7,-4,-1)$, for example, we have
\[ \scrC_+^w = 
\left\{
   \begin{array}{l}
     (-1,-1,-1), (0,-1,-1), (0,0,-1), (0,0,0), (1,-1,-1), \\
     (1,0,-1), (1,0,0), (1,1,-1), (1,1,0), (1,1,1)
   \end{array}
   \right\},
\]
and the bundle
\[ \iota_+^* \left( \bigoplus{}_{\chi \in \scrC_+^w} \,\mcO_W \otimes V(\chi) \right) \in \coh X_+ \]
is a tilting bundle that coincides (up to line bundle twist) with the pull-back of the direct sum of the Kapranov's collection (\S\ref{sect:KapFEC}).
However, the bundle over $X_-$ given by the other restriction
\[ \iota_-^* \left( \bigoplus{}_{\chi \in \scrC_+^w} \,\mcO_W \otimes V(\chi) \right) \in \coh X_- \]
is not (pre)tilting (see Remark~\ref{rem:why not Kapranov}).

\begin{rem}
For any $w \in \Z^3$, the pretilting bundle 
$\scrV_+^w \coloneqq \iota_{+}^*\left(\oplus_{\chi \in \scrC_{+}^w} \mcO_W \otimes V(\chi) \right)$
constructed by Halpern-Leistner's result
cannot coincide with the tilting bundles $\scrT_+^{\star}$ constructed in \S\ref{sec:windows}.
Indeed, by the condition (c) above, 
the bundle $\scrV_+^w$ can contain at most $3$ line bundles,
while the tilting bundles $\scrT_+^{\star}$ contain at least $4$ line bundles.
\end{rem}

\subsubsection{The $-$ side}

Recall that the 1-PSs associated to the KN stratification are
\begin{center} 
$\lambda_0 = (-1,-1,-1)$, $\lambda_1 = (1,1,-4)$, and $\lambda_2 = (1,0,-2)$. 
\end{center}
By definition, a dominant $\GL(S)$-weight $\chi = (\chi_1, \chi_2, \chi_3)$ is contained in $\scrC_-^w$ if and only if
it satisfies
\begin{enumerate}
\item[(a$'$)] $-\chi_1 - \chi_2 - \chi_3 \in [w_0, w_0 + 15)$,
\item[(b$'$)] $\chi_{\sigma(1)} + \chi_{\sigma(2)} - 4 \chi_{\sigma(3)} \in [w_1, w_1 +10)$ for all $\sigma \in \mathfrak{S}_3$, and
\item[(c$'$)] $\chi_{\sigma(1)} - 2 \chi_{\sigma(3)} \in [w_2, w_2+4)$ for all $\sigma \in \mathfrak{S}_3$.
\end{enumerate}
Now it follows from an elementary mathematics that:

\begin{lem}
For $w = (w_0,w_1,w_2) \in \Z^3$,
the set $\scrC_-^w$ of weights is a subset of
\[
\left\{
   \begin{array}{l}
     (-w_2-3,-w_2-3,-w_2-3), (-w_2-2,-w_2-2,-w_2-2), (-w_2-1,-w_2-1,-w_2-1), \\
     (-w_2,-w_2,-w_2), (-w_2-1,-w_2-2,-w_2-2), (-w_2-1,-w_2-1,-w_2-2)
   \end{array}
   \right\}.
\]
In particular, the set $\scrC_-^w$ can contain at most $6$ weights for any $w \in \Z^3$.
\end{lem}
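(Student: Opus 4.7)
The plan is to derive the claim from condition~(c$'$) alone; conditions~(a$'$) and~(b$'$) only further restrict $\scrC_-^w$ and are not needed. Since all three entries of $(1,0,-2)$ are distinct, as $\sigma$ ranges over $\mathfrak{S}_3$ the ordered pair $(\sigma(1),\sigma(3))$ sweeps out every ordered pair of distinct indices in $\{1,2,3\}$. Hence (c$'$) is equivalent to
\[ \chi_i - 2\chi_k \in [w_2,\, w_2+4) \quad \text{for all distinct } i,k \in \{1,2,3\}. \]

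The first step would be to bound each $\chi_i$ from above. Applying the lower bound of~(c$'$) with $(i,k)=(3,1)$ and combining with the dominance condition $\chi_1 \geq \chi_3$ gives $\chi_1 \geq 2\chi_1 + w_2$, i.e.\ $\chi_1 \leq -w_2$; running the same argument for the other indices shows $\chi_2, \chi_3 \leq -w_2$. The substitution $\chi_i = -w_2 - a_i$ with $a_i \in \Z_{\geq 0}$ is then natural: dominance becomes $a_1 \leq a_2 \leq a_3$, and (c$'$) takes the clean form
\[ 0 \leq 2a_k - a_i \leq 3 \quad \text{for every pair } i \neq k. \]

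The second step is a finite case analysis on $a_1$. The extremal instances are the lower bound at $(i,k)=(3,1)$, which reads $a_3 \leq 2a_1$, and the upper bound at $(i,k)=(1,3)$, which reads $2a_3 - a_1 \leq 3$. Together with $a_1 \leq a_3$ these force $2a_1 \leq a_1 + 3$, so $a_1 \leq 3$; for each $a_1 \in \{0,1,2,3\}$ a short enumeration produces at most the six triples
\[ (0,0,0),\ (1,1,1),\ (1,1,2),\ (1,2,2),\ (2,2,2),\ (3,3,3). \]
Translating back via $\chi_i = -w_2 - a_i$ recovers the six dominant weights in the statement, and the bound of at most six follows.

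The argument is entirely elementary integer bookkeeping; the only subtle point is recognising that~(c$'$) really amounts to the pairwise constraints above, after which the case analysis is immediate. I do not anticipate any genuine obstacle, which is consistent with the paper's claim that the statement follows from elementary mathematics.
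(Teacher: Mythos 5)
Your proof is correct and takes essentially the same route as the paper's: both arguments use only condition (c$'$), exploit the fact that the interval $[w_2,w_2+4)$ contains exactly four integers, and finish with a short enumeration of dominant weights. The only difference is bookkeeping --- the paper first bounds the spread $\chi_1-\chi_3\leq 1$ by noting that the two values $\chi_1-2\chi_3$ and $\chi_3-2\chi_1$ differ by $3(\chi_1-\chi_3)$ and must both lie in that interval, and then pins down the common shift, whereas you normalize by setting $\chi_i=-w_2-a_i$ and enumerate on $a_1\in\{0,1,2,3\}$.
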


\begin{proof}
Let $\chi$ be a dominant $\GL(3)$-weight, and set $\chi = (x+a+b, x+b, x)$ for some $x \in \Z$ and $a,b \in \Z_{\geq 0}$.
Then the condition (c$'$) implies that, if $\chi \in \scrC_-^w$, then two integers
\begin{center}
$v_{13} \coloneqq (x + a + b) - 2x = -x + (a + b)$ and $v_{31} \coloneqq x - 2(x+a+b) = -x - 2(a+b)$
\end{center}
are both contained in the interval $[w_2,w_2+4)$, which contains exactly four integers.
Therefore the difference $v_{13} - v_{31} = 3(a+b)$ must be at most $3$.
This implies that $(0 \leq)~a + b \leq 1$.

Thus the weight $\chi \in \scrC_-^w$ is one of the following forms:
\begin{enumerate}
\item[(1)] $\chi = (x,x,x)$ for some $x \in \Z$,
\item[(2)] $\chi = (x+1, x,x)$ for some $x \in \Z$, or
\item[(3)] $\chi = (x+1, x+1,x)$ for some $x \in \Z$.
\end{enumerate}

If $\chi = (x,x,x)$ for some $x \in \Z$, then the condition (c$'$) implies that $-x \in [w_2,w_2+4)$.
Therefore $\chi$ should be $(-w_2+k,-w_2+k,-w_2+k)$ for $-3 \leq k \leq 0$.

If $\chi = (x+1, x,x)$ for some $x \in \Z$, then the condition (c$'$) implies that two integers
\begin{center}
$(x+1) -2x = -x+1$ and $x - 2(x+1) = -x - 2$
\end{center}
are both in $[w_2,w_2+4)$.
Thus $-x - 2 = w_2$, and hence $\chi = (-w_2-1,-w_2-2,-w_2-2)$.

If $\chi = (x+1, x+1,x)$ for some $x \in \Z$, then the same argument shows that
the weight $\chi$ must be $(-w_2-1,-w_2-1,-w_2-2)$.
\end{proof}

As a consequence, the pretilting bundle
$\scrV_-^w \coloneqq \iota_{-}^*\left(\bigoplus_{\chi \in \scrC_{-}^w} \mcO_W \otimes V(\chi) \right)$
cannot be tilting, since it cannot have $10$ direct summands, where $10$ is the rank of the Grothendieck group
$K_0(X_-)$.
In other words, the category $\scrG_-^w$ is not ``magic'' for all $w \in \Z^3$.

If $w = (-7, -5, -2)$, then the equality is attained:
\[ \scrC_{-}^w = \{ (-1,-1,-1), (0,0,0),(1,0,0),(1,1,0),(1,1,1),(2,2,2) \}. \]
In this case, 
\[ \scrV_-^w \simeq \mcO(1) \oplus \mcO \oplus \scrP \oplus \scrP^{\vee}(-1) \oplus \mcO(-1) \oplus \mcO(-2). \]
The fact that $\scrV_-^w$ is pretilting can recover some computations in \S\ref{sect:minus-side-tilting},
but not all of them.

\appendix
\section{On fully faithful functors} \label{sect:fffunctors}

This section collects two propositions, which will be well-known to experts.
We provide a full proof of those propositions, since we could not find references.

\begin{prop} \label{prop:generator ff}
Let $\Phi \colon \scrA \to \scrB$ be an exact functor between triangulated categories.
Assume that $\scrA$ admits a classical generator $G \in \scrA$ such that the homomorphism
\[ \Hom(G,G[i]) \to \Hom(\Phi G, \Phi G[i]) \]
associated to $\Phi$ is an isomorphism for all $i \in \Z$.
Then $\Phi$ is fully faithful.
\end{prop}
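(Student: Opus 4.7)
The plan is to prove fully faithfulness by the standard dévissage argument, bootstrapping from $G$ to all objects of $\scrA$ one variable at a time.

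First I would define the full subcategory
\[ \scrT \coloneqq \bigl\{ A \in \scrA \;\bigm|\; \Hom_{\scrA}(G, A[i]) \to \Hom_{\scrB}(\Phi G, \Phi A[i]) \text{ is an isomorphism for all } i \in \Z \bigr\}. \]
By hypothesis $G \in \scrT$. The task is then to show $\scrT$ is a thick subcategory of $\scrA$, which by the definition of classical generator forces $\scrT = \scrA$. Closure under shifts is immediate. Closure under cones follows from the five-lemma: given a triangle $A \to B \to C \to A[1]$ with $A, B \in \scrT$, the functors $\Hom_{\scrA}(G,-)$ and $\Hom_{\scrB}(\Phi G, \Phi(-))$ produce long exact sequences (using that $\Phi$ is exact), and the comparison map is an isomorphism on four out of five consecutive terms. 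Closure under direct summands follows because $\Hom$ in each variable distributes over finite direct sums and an isomorphism splits accordingly.

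Next I would repeat the argument with the roles of the two arguments reversed. Define
\[ \scrT' \coloneqq \bigl\{ A \in \scrA \;\bigm|\; \Hom_{\scrA}(A, B[i]) \to \Hom_{\scrB}(\Phi A, \Phi B[i]) \text{ is an isomorphism for all } B \in \scrA, \, i \in \Z \bigr\}. \]
By the first step we have $G \in \scrT'$, since the required isomorphism then reads $\Hom_{\scrA}(G, B[i]) \simeq \Hom_{\scrB}(\Phi G, \Phi B[i])$ for every $B$. The same three verifications (shifts, cones via the five-lemma applied in the contravariant variable, summands) show that $\scrT'$ is thick. Hence $\scrT' = \scrA$, which is precisely the assertion that $\Phi$ is fully faithful.

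Neither step involves any real obstacle; the argument is routine dévissage. The only point deserving attention is to record why each $\Hom$ condition defines a thick subcategory, which rests on the five-lemma together with the compatibility of $\Phi$ with triangles and direct sums. These are available from the hypothesis that $\Phi$ is exact between triangulated categories.
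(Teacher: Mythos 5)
Your proof is correct and is essentially the same dévissage as the paper's: fix the first variable at $G$ and bootstrap the second, then bootstrap the first, using the five-lemma for cones and splitting for summands. The only difference is presentational — you phrase each step as ``the locus where the comparison map is an isomorphism is a thick subcategory containing $G$,'' whereas the paper unwinds this into an explicit induction on the filtration $\langle G \rangle_n$ — but the mathematical content is identical.
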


\begin{proof}
Define subcategories $\langle G \rangle_n \subset \scrC$ for $n \geq 0$ as in \cite[\S2]{BFK11}.
Since $G$ is a classical generator, $\scrA = \thick(G) = \cup_{i \geq 0} \langle G \rangle_n$.
We claim that, for any non-negative integer $n \geq 0$,
the natural homomorphism
\[ \Hom(x',x) \to \Hom(\Phi x', \Phi x) \]
is an isomorphism for all $x, x' \in \langle G \rangle_n$, and prove this claim using an induction on $n$.

The case $n = 0$ follows from the assumption.

Let us assume that $n > 0$, and that the claim holds for $n -1$.
Fix an arbitrary object $x \in \langle G \rangle_n$.

Then there exist $x_1 \in \langle G \rangle_n$, $y \in \langle G \rangle_{n-1}$, $z \in \langle G \rangle_{0}$,
and an exact triangle
\[ y \to x \oplus x_1 \to z \to y[1]. \]
Let $y' \in \langle G \rangle_{n-1}$ be an arbitrary object,
and consider the following commutative diagram of long exact sequences.
\[ \footnotesize{\begin{tikzcd}
\Hom(y', z[-1]) \arrow[r] \arrow[d, "\simeq"] & \Hom(y', y) \arrow[r] \arrow[d, "\simeq"] & \Hom(y' ,x\oplus x_1) \arrow[r] \arrow[d]  & \Hom(y', z) \arrow[r] \arrow[d, "\simeq"] & \Hom(y', y[1])  \arrow[d, "\simeq"] \\
\Hom(\Phi y', \Phi z[-1]) \arrow[r] & \Hom(\Phi y', \Phi y) \arrow[r] & \Hom(\Phi y' ,\Phi x\oplus \Phi x_1) \arrow[r]  & \Hom(\Phi y', \Phi z) \arrow[r] & \Hom(\Phi y', \Phi y[1])
\end{tikzcd}} \]
By the inductive hypothesis, all vertical morphisms except for the middle one are isomorphisms.
Therefore the five lemma implies that the middle one is also an isomorphism.
As a consequence, it has followed that for any $y' \in \langle G \rangle_{n-1}$,
the morphism $\Hom(y',x) \to \Hom(\Phi y', \Phi x)$ is isomorphic.

Similarly, let $x' \in \langle G \rangle_n$ be an arbitrary object, and assume that there exists an exact triangle
\[ w \to x' \oplus x_2 \to u \to w[1] \]
for some $x_2 \in \langle G \rangle_n$, $w \in \langle G \rangle_{n-1}$ and $u \in \langle G \rangle_{0}$.
Applying the functor $\Hom(-,x)$ to the sequence above, and then using the same argument as above,
it follows that the morphism $\Hom(x',x) \to \Hom(\Phi x', \Phi x)$ is an isomorphism.

Since $x$ and $x'$ are arbitrary, the claim for $n$ follows.
\end{proof}

\begin{prop} \label{prop: dense im}
Let $\Phi \colon \scrA \to \scrB$ be an exact fully faithful functor between triangulated categories.
Assume that the category $\scrA$ is idempotent-complete.
Then the essential image of $\Phi$ is closed under taking direct summands.
(In other words, the essential image is a thick subcategory of $\scrB$.)
\end{prop}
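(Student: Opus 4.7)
My plan is as follows. Suppose $b \in \scrB$ lies in the essential image of $\Phi$, so there exists $a \in \scrA$ with a chosen isomorphism $b \simeq \Phi(a)$, and suppose further that $b \simeq b_1 \oplus b_2$ in $\scrB$. I want to show that both $b_1$ and $b_2$ lie in the essential image.

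The decomposition $b \simeq b_1 \oplus b_2$ is encoded by the idempotent endomorphism $e \coloneqq \iota_1 \circ \pi_1 \in \End_{\scrB}(b)$, where $\pi_k \colon b \to b_k$ and $\iota_k \colon b_k \to b$ are the structural morphisms of the direct sum (so $\pi_1 \circ \iota_1 = \id_{b_1}$). Because $\Phi$ is fully faithful, the induced ring homomorphism
\[ \End_\scrA(a) \xrightarrow{\sim} \End_\scrB(\Phi(a)) \simeq \End_\scrB(b) \]
is an isomorphism. Hence $e$ lifts uniquely to an idempotent $\tilde{e} \in \End_\scrA(a)$.

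Now I invoke idempotent-completeness of $\scrA$ to split $\tilde{e}$: this yields a decomposition $a \simeq a_1 \oplus a_2$ in $\scrA$ such that $\tilde{e}$ is the composite of the projection $a \to a_1$ with the inclusion $a_1 \to a$. Applying the exact functor $\Phi$ gives $\Phi(a) \simeq \Phi(a_1) \oplus \Phi(a_2)$, and under this identification $\Phi(\tilde{e}) = e$ is the projection-inclusion onto the first summand $\Phi(a_1)$. Since in any additive category the image of a given idempotent is unique up to canonical isomorphism, this forces $b_1 \simeq \Phi(a_1)$, and symmetrically $b_2 \simeq \Phi(a_2)$. Thus both summands lie in the essential image, which is therefore thick.

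The only non-formal ingredient is the splitting of $\tilde{e}$ inside $\scrA$, which is precisely the idempotent-completeness hypothesis. Everything else is a direct translation across the isomorphism of endomorphism rings provided by full faithfulness, so I do not anticipate any real obstacle.
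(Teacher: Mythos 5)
Your proposal is correct and follows essentially the same argument as the paper: lift the idempotent corresponding to the summand through the endomorphism-ring isomorphism given by full faithfulness, split it in $\scrA$ using idempotent-completeness, and identify the summand with the image of the split piece via the uniqueness of idempotent splittings. No issues.
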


\begin{proof}
Assume that $\Phi x \simeq y \oplus z$ for $x \in \scrA$ and $y,z \in \scrB$.
Let $e_y \colon \Phi x \to \Phi x$ be an idempotent associated to the summand $y$.
Since $\Phi$ is fully faithful,
there is a unique morphism $e \colon x \to x$ such that $\Phi(e) = e_y$, and this $e$ remains an idempotent.
Then the idempotent-completeness of $\scrA$ implies that there are $w, u \in \scrA$ such that $x \simeq w \oplus u$
and the idempotent $e$ coincides with the composite $x \twoheadrightarrow w \hookrightarrow x$.
Now it is standard that $\Phi w \simeq y$.
Therefore the essential image of $\Phi$ is closed under taking direct summands.
\end{proof}

\section{Code}\label{app:code}
In this section we outline the computational tools employed for the Borel-Weil-Bott computations presented in this paper. The goal here is primarily explanatory: to keep the exposition uncluttered, we omit certain auxiliary functions that do not affect the core ideas. The complete and fully documented code is available in the public repository \cite{repository}.
\subsection{Requirements}
\begin{enumerate}
    \item Python $3.12$ must be installed \cite{python}.
    \item Run the following command lines in the terminal.
    \begin{lstlisting}[style=terminal]
        $ pip install lrcalc
    \end{lstlisting}
    This will install the package \verb|lrcalc| which we use to compute the Littlewood--Richardson product of partitions \cite{lrcalc}.
\end{enumerate}
\subsection{Main classes}
In the following, we describe the classes implementing homogeneous vector bundles on Grassmannians. We introduce methods to compute their products, duals, ranks and cohomology.
\begin{lstlisting}[style=modernpython]
    import lrcalc
    
    class Partition:
        """
        We made a custom class so that we can use '*' to denote the Littlewood--Richardson product
        (by the special method __mul__), to keep track of multiplicities and to have a clean print.
        Multiplicity is one by default
        """
        def __init__(self, mylist, multiplicity=1):
            self.partition = mylist
            self.multiplicity= multiplicity
        def __mul__(self, other_partition):
            raw_result = lrcalc.mult(self.partition, other_partition.partition)
            base_multiplicity = self.multiplicity * other_partition.multiplicity
            result = []
            for key, value in raw_result.items():
                result.append(Partition(list(key), value * base_multiplicity))
            return result
        def __str__(self):
            return f"{self.partition}(x{self.multiplicity})"
        def __repr__(self):
            return f"{self.partition}(x{self.multiplicity})"
    
    class HomogeneousIrreducible:
    """
    this class represents homogeneous irrediucible vector bundles on G(k, n).
    the first partition is a list representing a Schur power of U*, while
    the second represents a Schur power of Q
    Multiplicity is one by default
    """
    def __init__(self, k, n, first_partition, second_partition, multiplicity=1):
        self.k = k
        self.n = n
        """
        beware that self.first_partition is a Partition object. To access the partition as a list,
        call self.first_partition.partition. Same for second_partition
        """
        self.first_partition = Partition(first_partition + [0] * (k - len(first_partition)), 1)
        self.second_partition = Partition(second_partition + [0] * (n - k - len(second_partition)), 1)
        self.multiplicity= multiplicity
    def __mul__(self, other_double_partition):
        """
        we compute the product of two HomogeneousVectorBundle's with '*'. Littlewood--Richardson is applied
        on both partitions.
        """
        result = []
        result_first_partitions = self.first_partition * other_double_partition.first_partition
        result_second_partitions = self.second_partition * other_double_partition.second_partition
        for item in result_first_partitions:
            for other_item in result_second_partitions:
                if len(item.partition) <= self.k and len(other_item.partition) <= self.n-self.k:
                    total_multiplicity = self.multiplicity * other_double_partition.multiplicity * item.multiplicity * other_item.multiplicity
                    result.append(HomogeneousIrreducible(self.k, self.n, item.partition, other_item.partition, total_multiplicity))
        return HomogeneousDirectSum(result)
    def __str__(self):
        return (
            f"{self.first_partition.partition}|{self.second_partition.partition}"
            f"(x{self.multiplicity})"
    )
    def __repr__(self):
        return (
            f"{self.first_partition.partition}|{self.second_partition.partition}"
            f"(x{self.multiplicity})"
    )
    def rank(self):
        return utils.weyl_dim(self.first_partition.partition) * utils.weyl_dim(self.second_partition.partition)
    def dual(self):
        # reverse and negate
        part1 = self.first_partition.partition
        part2 = self.second_partition.partition
        dual_first = [-x for x in reversed(part1)]
        dual_second = [-x for x in reversed(part2)]
        # find the global twist
        max_entry = -min(dual_first + dual_second)
        t = max_entry
        # shift by k
        twisted_first = [x + t for x in dual_first]
        twisted_second = [x + t for x in dual_second]
        # return a new HomogeneousIrreducible (with same multiplicity)
        return HomogeneousIrreducible(self.k, self.n, twisted_first, twisted_second, self.multiplicity)

    def cohomology(self):
        """
        compute cohomology with Borel--Weyl--Bott.
        If rep is true, the output has the form {"representation": [,,,], "dimension": k, "degree": p},
        otherwise  {"representation": [,,,], "degree": p}
        """
        part1 = self.first_partition.partition
        part2 = self.second_partition.partition
        padded_first = part1 + [0] * (self.k - len(part1))
        padded_second = part2 + [0] * (self.n - self.k - len(part2))
        total_partition = padded_first + padded_second
        # adding rho
        total_partition = utils.add_rho(total_partition)
        # checking repetitions.
        # if there are none:
        if not utils.has_repeats(total_partition):
            degree = 0
            while True:
                if utils.is_decreasing(total_partition):
                    total_partition = utils.subtract_rho(total_partition)
                    return {
                        "representaiton": utils.normalize_partition(total_partition),
                        "dimension": utils.weyl_dim(total_partition),
                        "degree": degree
                    }
                else:
                    utils.swap_first_increase(total_partition)
                    degree = degree + 1
        # acyclic case
        else:
            return "no cohomology"

    
    
    class HomogeneousDirectSum(list):
        """
        this class is just a wrapper for HomogeneousIrreducible. We made it so that
        the tensor product is distributive wrt direct sum
        """
        def __init__(self, summands=None):
            """
            summands: an iterable of HomogeneousIrreducible (or other DirectSums).
            """
            # if they passed nothing, treat as empty list
            super().__init__(summands or [])
        def __mul__(self, other):
            """
            Distribute self * other over every summand.
            other may be a single HI or another direct sum.
            """
            out = []
            # if other is a 'vector':
            if isinstance(other, HomogeneousDirectSum):
                for A in self:
                    for B in other:
                        # A*B returns another direct sum
                        prod = A * B
                        out.extend(prod if isinstance(prod, list) else [prod])
            else:
                # other is a single HI
                for A in self:
                    prod = A * other
                    out.extend(prod if isinstance(prod, list) else [prod])
            return HomogeneousDirectSum(out)
    
        def dual(self):
            """
            Return the direct sum of the duals of each summand.
            """
            return HomogeneousDirectSum([X.dual() for X in self])
    
        def __rmul__(self, other):
            """
            Called when the left operand doesn't know how to multiply
            by a direct sum.  Typically other is a single HI.
            """
            # other * self  = distribute other over our list
            out = []
            for A in self:
                prod = other * A
                out.extend(prod if isinstance(prod, list) else [prod])
            return HomogeneousDirectSum(out)
    
        def cohomology(self):
            """
            cohomology of a direct sum is the direct sum of the cohomologies
            """
            return [X.cohomology() for X in self]

    
    
\end{lstlisting}

\subsection{The script for \S\ref{sect:plus-side-tilting}}

\subsubsection{The script for the proof of Theorem~\ref{lem:tilting_plus_side}} \label{code:lem:tilting_plus_side}
    The following function computes the Ext between two pullbacks of direct sums of two homogeneous vector bundles on either $X_+$ or~$X_-$. The input data is organized as follows:
    \begin{itemize}
        \item The integers \verb|k| and \verb|n| are the ones defining the Grassmannian $G(k, n)$.
        \item \verb|E| and \verb|F| are instances of \verb|HomogeneousIrreducible| or \verb|HomogeneousDirectSum|.
        \item The variable \verb|bundle| is expected to be either \verb|''U*(-2)''| or \verb|''Q*(-2)''|, depending on whether we want to compute the Ext on $X_+$ or $X_-$.
        \item The variable \verb|cutoff| is a nonnegative integer representing the maximum symmetric power of $\mcU(2)$ or $\mcQ^\vee(2)$ we consider in the expansion of the pushforward of the structure sheaf of the total space (see the proof of Theorem \ref{lem:tilting_plus_side}).
    \end{itemize}
    \begin{lstlisting}[style=modernpython]
        def ext_on_total_space(k, n, E, F, bundle, cutoff=8):
            pushforward_structure_sheaf = []
            if bundle == 'U*(-2)':
                for i in range(cutoff + 1):
                    pushforward_structure_sheaf.append(hb.HomogeneousIrreducible(k, n, [2*i]*(k-1)+[i], [0]*(n-k), 1))
                pushforward_structure_sheaf = hb.HomogeneousDirectSum(pushforward_structure_sheaf)
                # return pushforward_structure_sheaf
            elif bundle == 'Q(-2)':
                for i in range(cutoff + 1):
                    pushforward_structure_sheaf.append(hb.HomogeneousIrreducible(k, n, [2*i]*(k), [i]+[0]*(n-k-1), 1))
                pushforward_structure_sheaf = hb.HomogeneousDirectSum(pushforward_structure_sheaf)
                # return pushforward_structure_sheaf
            else:
                return "ERROR: this bundle is not supported (yet)"
            return (E.dual() * F * pushforward_structure_sheaf).cohomology()
    \end{lstlisting}
    To check whether a given bundle is tilting, we can simply run the following function:
    \begin{lstlisting}[style=modernpython]
        def is_it_tilting(k, n, E, bundle, cutoff):
            check = True
            cohomology = ext_on_total_space(k, n, E, E, bundle, cutoff)
            for item in cohomology:
                if item != "no cohomology":
                    if item["degree"] != 0:
                        check = False
            return check
    \end{lstlisting}
    The following code blocks introduce the generators of the windows as \verb|HomogeneousDirectSum| objects: 
    \begin{lstlisting}[style=modernpython]
        FirstWindowG35 = hb.HomogeneousDirectSum([
            hb.HomogeneousIrreducible(3, 5, [], [], 1),
            hb.HomogeneousIrreducible(3, 5, [1,1,1], [], 1),
            hb.HomogeneousIrreducible(3, 5, [], [1,1], 1),
            hb.HomogeneousIrreducible(3, 5, [1,1], [], 1),
            hb.HomogeneousIrreducible(3, 5, [1], [1,1], 1),
            hb.HomogeneousIrreducible(3, 5, [2], [1,1], 1),
            hb.HomogeneousIrreducible(3, 5, [1,1,1], [], 1),
            hb.HomogeneousIrreducible(3, 5, [2,1,1], [], 1),
            hb.HomogeneousIrreducible(3, 5, [2,2,1], [], 1),
            hb.HomogeneousIrreducible(3, 5, [2,2,2], [], 1),
            ]
        )
    \end{lstlisting}
    \begin{lstlisting}[style=modernpython]
        SecondWindowG35 = hb.HomogeneousDirectSum([
            hb.HomogeneousIrreducible(3, 5, [], [1,1], 1),
            hb.HomogeneousIrreducible(3, 5, [], [], 1),
            hb.HomogeneousIrreducible(3, 5, [1,1,1], [], 1),
            hb.HomogeneousIrreducible(3, 5, [2,2,2], [], 1),
            hb.HomogeneousIrreducible(3, 5, [3,3,3], [], 1),
            hb.HomogeneousIrreducible(3, 5, [1], [1,1,1], 1),
            hb.HomogeneousIrreducible(3, 5, [1], [], 1),
            hb.HomogeneousIrreducible(3, 5, [2,1,1], [], 1),
            hb.HomogeneousIrreducible(3, 5, [1,1], [], 1),
            hb.HomogeneousIrreducible(3, 5, [2,2,1], [1,1], 1),
            ]
        )
    \end{lstlisting}
    Then, the proof is completed once we run the following:
    \begin{lstlisting}[style=modernpython]
        print(f"Is the first generator tilting? {is_it_tilting(3, 5, FirstWindowG35, "U*(-2)")}.")
        print(f"Is the second generator tilting? {is_it_tilting(3, 5, SecondWindowG35, "U*(-2)")}.")

    \end{lstlisting}

\subsection{The scripts for \S\ref{sect:minus-side-tilting}}

We use an adaptation of the function \verb|ext_on_total_space| we defined in \S\ref{sec:windows}. In particular, we rely on the following script:
\begin{lstlisting}[style=modernpython]
    def higher_ext_on_total_space(k, n, E, F, bundle, cutoff=8):
    check = True
    cohomology = ext_on_total_space(k, n, E, F, bundle, cutoff)
    for item in cohomology:
        if item != "no cohomology":
            if item["degree"] != 0:
                check = False
    return check 

\end{lstlisting}

\subsubsection{The script for the proof of Proposition~\ref{prop:easy_vanishings_line_bundles}}
\label{code:prop:easy_vanishings_line_bundles}

    \begin{lstlisting}[style=modernpython]
        for k in range(5):
            line_bundle = hb.HomogeneousIrreducible(2, 5, [0], [k,k,k])
            print(f"O({-k}) has no higher cohomology: {ext.higher_cohomlogy_on_total_space(2, 5, line_bundle, 'Q(-2)')}")
    \end{lstlisting}
    
\subsubsection{The script for the proof of Proposition~\ref{P}}
\label{code:P}
    \begin{lstlisting}[style=modernpython]
    for k in range(3):
        vector_bundle = hb.HomogeneousIrreducible(2, 5, [1], [k,k,k])
        print(f"U^*({-k}) has no higher cohomology: {ext.higher_cohomlogy_on_total_space(2, 5, vector_bundle, 'Q(-2)')}")
\end{lstlisting}

\subsubsection{The script for the proof of Proposition~\ref{prop P sym2P}}
\label{code:prop P sym2P}

\begin{lstlisting}[style=modernpython]
    for a in range(3):
        twisted_sym = hb.HomogeneousIrreducible(2, 5, [2+a,a], [])
        universal = hb.HomogeneousIrreducible(2, 5, [1], [])
        print(f"vanishing of higher Ext(U_2^*, Sym^2 U_2^*({a})) on G(2, 5): {ext.higher_ext_on_total_space(2, 5, universal, twisted_sym, 'Q(-2)')}")   
\end{lstlisting}

\subsubsection{The script for the proof of Proposition~\ref{prop:sym2P-sym2P}}
\label{code:prop:sym2P-sym2P}

\begin{lstlisting}[style=modernpython]
    sym = hb.HomogeneousIrreducible(2, 5, [2], [])
    dual_sym = hb.HomogeneousIrreducible(2, 5, [2], [2,2,2])
    print(f"Sym^2 U_2\otimes \Sym^2 U_2^* has no higher cohomology: {ext.higher_cohomlogy_on_total_space(3, 5, vector_bundle, 'U*(-2)')}")
\end{lstlisting}

\bibliographystyle{alpha}
\bibliography{biblio}

\begin{thebibliography}{FKMR23}

\bibitem[ADS15]{addingtondonovansegal}
Nicolas Addington, Will Donovan, and Ed~Segal.
\newblock {The Pfaffian-Grassmannian equivalence revisited}.
\newblock {\em ALG-GEOM}, 2:332--364, 2015.

\bibitem[BFK12]{BFK11}
Matthew Ballard, David Favero, and Ludmil Katzarkov.
\newblock Orlov spectra: bounds and gaps.
\newblock {\em Invent. Math.}, 189(2):359--430, 2012.

\bibitem[BFK19]{BFK_VGIT}
Matthew Ballard, David Favero, and Ludmil Katzarkov.
\newblock Variation of geometric invariant theory quotients and derived
  categories.
\newblock {\em J. Reine Angew. Math.}, 746:235--303, 2019.

\bibitem[BO02]{bondalorlov}
Alexei Bondal and Dmitri~O. Orlov.
\newblock {Derived Categories of Coherent Sheaves}.
\newblock {\em arXiv: Algebraic Geometry}, 2002.

\bibitem[Bri02]{bridgeland_flop}
Tom Bridgeland.
\newblock Flops and derived categories.
\newblock {\em Invent. Math.}, 147(3):613--632, 2002.

\bibitem[Buc99]{lrcalc}
Anders~S. Buch.
\newblock {\em Littlewood-Richardson Calculator}.
\newblock Rutgers University, 1999.
\newblock Accessed: 2025-09-09.

\bibitem[BVdB03]{bondalVdB_generator}
Alexei Bondal and Michel Van~den Bergh.
\newblock Generators and representability of functors in commutative and
  noncommutative geometry.
\newblock {\em Mosc. Math. J.}, 3(1):1--36, 258, 2003.

\bibitem[DHKR25]{repository}
Will Donovan, Wahei Hara, Micha{\l} Kapustka, and Marco Rampazzo.
\newblock Ag-flops-scripts.
\newblock \url{https://github.com/marcorampazzo/AG-flops-scripts}, 2025.
\newblock Accessed: 2025-10-06.

\bibitem[DS14]{Donovan_Segal_2014}
Will Donovan and Ed~Segal.
\newblock Window shifts, flop equivalences and {G}rassmannian twists.
\newblock {\em Compositio Mathematica}, 150(6):942--978, 2014.

\bibitem[EHKR]{EHKRnew}
Richard Eager, Kentaro Hori, Johanna Knapp, and Mauricio Romo.
\newblock Grade restriction rule for {D}-brane transport in non-abelian gauged
  linear sigma models.
\newblock in preparation.

\bibitem[EHKR17]{EHKR17}
Richard Eager, Kentaro Hori, Johanna Knapp, and Mauricio Romo.
\newblock Beijing lectures on the grade restriction rule.
\newblock {\em Chinese Ann. Math. Ser. B}, 38(4):901--912, 2017.

\bibitem[FH91]{FultonHarrisBook}
William Fulton and Joe Harris.
\newblock {\em Representation theory}, volume 129 of {\em Graduate Texts in
  Mathematics}.
\newblock Springer-Verlag, New York, 1991.
\newblock A first course, Readings in Mathematics.

\bibitem[FKMR23]{ourpaper_generalizedroofs}
Enrico Fatighenti, Micha\l{} Kapustka, Giovanni Mongardi, and Marco Rampazzo.
\newblock The generalized roof {$F(1, 2,n)$}: {H}odge structures and derived
  categories.
\newblock {\em Algebr. Represent. Theory}, 26(6):2313--2342, 2023.

\bibitem[Har17]{hara_mukai}
Wahei Hara.
\newblock Non-commutative crepant resolution of minimal nilpotent orbit
  closures of type {A} and {M}ukai flops.
\newblock {\em Adv. Math.}, 318:355--410, 2017.

\bibitem[Har20]{hara_def}
Wahei Hara.
\newblock Deformation of tilting-type derived equivalences for crepant
  resolutions.
\newblock {\em Int. Math. Res. Not. IMRN}, (13):4062--4102, 2020.

\bibitem[Har21]{hara_G2}
Wahei Hara.
\newblock On the {A}buaf-{U}eda flop via non-commutative crepant resolutions.
\newblock {\em SIGMA Symmetry Integrability Geom. Methods Appl.}, 17:Paper No.
  044, 22, 2021.

\bibitem[Har22]{hara_abuaf}
Wahei Hara.
\newblock On derived equivalence for {A}buaf flop: mutation of non-commutative
  crepant resolutions and spherical twists.
\newblock {\em Matematiche (Catania)}, 77(2):329--371, 2022.

\bibitem[Har24]{hara_G2dag}
Wahei Hara.
\newblock Derived equivalence for the simple flop of type {$G_2^{\dagger}$} via
  tilting bundles, 2024.

\bibitem[Har25]{hara_D4}
Wahei Hara.
\newblock Derived equivalence for the simple flop of type {$D_4$} via tilting
  bundles, 2025.

\bibitem[HHP08]{HHP}
Manfred Herbst, Kentaro Hori, and David Page.
\newblock Phases of {N}=2 theories in 1+1 dimensions with boundary, 2008.

\bibitem[Hir17]{hiranoknorrer}
Yuki Hirano.
\newblock Derived {K}n\"orrer periodicity and {O}rlov's theorem for gauged
  {L}andau-{G}inzburg models.
\newblock {\em Compos. Math.}, 153(5):973--1007, 2017.

\bibitem[Hir21]{hiranoequivtilt}
Yuki Hirano.
\newblock Equivariant tilting modules, {P}faffian varieties and noncommutative
  matrix factorizations.
\newblock {\em SIGMA Symmetry Integrability Geom. Methods Appl.}, 17:Paper No.
  055, 43, 2021.

\bibitem[HL15]{halpernleistner_GIT}
Daniel Halpern-Leistner.
\newblock The derived category of a {GIT} quotient.
\newblock {\em Journal of the American Mathematical Society}, 28(3):871--912,
  2015.

\bibitem[HLS20]{halpernleistner_derived}
Daniel Halpern-Leistner and Steven~V. Sam.
\newblock Combinatorial constructions of derived equivalences.
\newblock {\em J. Amer. Math. Soc.}, 33(3):735--773, 2020.

\bibitem[Hor16]{Hori}
Kentaro Hori.
\newblock Grade restriction rule and equivalences of categories.
\newblock In {\em Kinosaki Symposium in Algebraic Geometry}, pages 75--88.
  Kyoto University, 2016.

\bibitem[Hos14]{Hoskins}
Victoria Hoskins.
\newblock Stratifications associated to reductive group actions on affine
  spaces.
\newblock {\em Q. J. Math.}, 65(3):1011--1047, 2014.

\bibitem[HT07]{horitong}
Kentaro Hori and David Tong.
\newblock {Aspects of Non-Abelian Gauge Dynamics in Two-Dimensional N=(2,2)
  Theories}.
\newblock {\em JHEP}, 05:079, 2007.

\bibitem[Isi13]{Isik13}
Mehmet~Umut Isik.
\newblock Equivalence of the derived category of a variety with a singularity
  category.
\newblock {\em Int. Math. Res. Not. IMRN}, (12):2787--2808, 2013.

\bibitem[IW14]{iyamawemyssMMA}
Osamu Iyama and Michael Wemyss.
\newblock Singular derived categories of {$\Bbb{Q}$}-factorial terminalizations
  and maximal modification algebras.
\newblock {\em Adv. Math.}, 261:85--121, 2014.

\bibitem[Kan22]{kanemitsu}
Akihiro Kanemitsu.
\newblock Mukai pairs and simple {$K$}-equivalence.
\newblock {\em Math. Z.}, 302(4):2037--2057, 2022.

\bibitem[Kap85]{kapranov_grassmannians}
Mikhail Kapranov.
\newblock On the derived category of coherent sheaves on {Grassmann} manifolds.
\newblock {\em Math. USSR, Izv.}, 24:183--192, 1985.

\bibitem[Kap13]{KapPfaffian}
Micha\l{} Kapustka.
\newblock Mirror symmetry for {P}faffian {C}alabi-{Y}au 3-folds via conifold
  transitions.
\newblock 2013.

\bibitem[Kaw02]{kawamata}
Y{\=u}jir{\=o} Kawamata.
\newblock D-equivalence and {K}-equivalence.
\newblock {\em Journal of Differential Geometry}, 61:147--171, 2002.

\bibitem[KR19]{ourpaper_cy3s}
Micha\l{} Kapustka and Marco Rampazzo.
\newblock Torelli problem for {C}alabi-{Y}au threefolds with {GLSM}
  description.
\newblock {\em Commun. Number Theory Phys.}, 13(4):725--761, 2019.

\bibitem[KS19]{knappsharpe}
Johanna Knapp and Eric Sharpe.
\newblock Glsms, joins, and nonperturbatively-realized geometries.
\newblock {\em Journal of High Energy Physics}, 2019(12):96, 2019.

\bibitem[Mor22]{morimura_flop}
Hayato Morimura.
\newblock Derived equivalences for the flops of type {$C_2$} and {$A_4^G$} via
  mutation of semiorthogonal decomposition.
\newblock {\em Algebr. Represent. Theory}, 25(3):581--594, 2022.

\bibitem[OR18]{ottemrennemo}
John~Christian Ottem and J\o rgen~Vold Rennemo.
\newblock A counterexample to the birational {T}orelli problem for
  {C}alabi-{Y}au threefolds.
\newblock {\em J. Lond. Math. Soc. (2)}, 97(3):427--440, 2018.

\bibitem[{Pyt}25]{python}
{Python Core Team}.
\newblock {\em Python: A dynamic, open-source programming language}.
\newblock Python Software Foundation, 2025.
\newblock Accessed: 2025-09-09.

\bibitem[Seg11]{segal_git}
Ed~Segal.
\newblock Equivalences between {GIT} quotients of {L}andau-{G}inzburg
  {B}-models.
\newblock {\em Communications in Mathematical Physics}, 304(2):411--432, 2011.

\bibitem[Seg16]{segal_C2}
Ed~Segal.
\newblock A new 5-fold flop and derived equivalence.
\newblock {\em Bull. Lond. Math. Soc.}, 48(3):533--538, 2016.

\bibitem[Shi12]{shipman}
Ian Shipman.
\newblock {A Geometric Approach to Orlov's Theorem}.
\newblock {\em Compos. Math.}, 148:1365--1389, 2012.

\bibitem[{\v{S}}VdB20]{SpenkoVdB_toric2}
\v{S}pela {\v{S}}penko and Michel Van~den Bergh.
\newblock Non-commutative crepant resolutions for some toric singularities.
  {II}.
\newblock {\em J. Noncommut. Geom.}, 14(1):73--103, 2020.

\bibitem[TU10]{todauehara}
Yukinobu Toda and Hokuto Uehara.
\newblock Tilting generators via ample line bundles.
\newblock {\em Adv. Math.}, 223(1):1--29, 2010.

\bibitem[VdB04a]{VdB_NCCR}
Michel Van~den Bergh.
\newblock Non-commutative crepant resolutions.
\newblock In {\em The legacy of {N}iels {H}enrik {A}bel}, pages 749--770.
  Springer, Berlin, 2004.

\bibitem[VdB04b]{VdB_flop}
Michel Van~den Bergh.
\newblock Three-dimensional flops and noncommutative rings.
\newblock {\em Duke Math. J.}, 122(3):423--455, 2004.

\bibitem[vEvS06]{vEvS}
Christian van Enckevort and Duco van Straten.
\newblock Monodromy calculations of fourth order equations of {C}alabi--{Y}au
  type.
\newblock In {\em Mirror symmetry. {V}}, volume~38 of {\em AMS/IP Stud. Adv.
  Math.}, pages 539--559. Amer. Math. Soc., Providence, RI, 2006.

\bibitem[Wey03]{weyman}
Jerzy Weyman.
\newblock {\em {Cohomology of Vector Bundles and Syzygies}}.
\newblock Cambridge Tracts in Mathematics. Cambridge University Press, 2003.

\bibitem[Wit93]{witten}
Edward Witten.
\newblock Phases of {N} = 2 theories in two dimensions.
\newblock {\em Nuclear Physics B}, 403(1):159--222, 1993.

\end{thebibliography}

\end{document}